\newtheorem{theorem}{Theorem}[section]
\newtheorem{lemma}[theorem]{Lemma}
\newtheorem{cor}[theorem]{Corollary}
\theoremstyle{definition}
\newtheorem{definition}[theorem]{Definition}
 \def \dom{\operatorname{dom}}
\mathchardef\mhyphen="2D
\begin{document}

\title{Proofs that Modify Proofs\\{\MakeTitlecase{Work in Progress}}}
\author{Henry Towsner}
\date{\today}
\thanks{Partially supported by NSF grant DMS-2054379}
\address {Department of Mathematics, University of Pennsylvania, 209 South 33rd Street, Philadelphia, PA 19104-6395, USA}
\email{htowsner@math.upenn.edu}
\urladdr{\url{http://www.math.upenn.edu/~htowsner}}

\begin{abstract}
  In this paper we give an ordinal analysis of the theory of second order arithmetic.   We do this by working with proof trees---that is, ``deductions'' which may not be well-founded. Working in a suitable theory, we are able to represent functions on proof trees as yet further proof trees satisfying a suitable analog of well-foundedness. Iterating this process allows us to represent higher order functions as well: since functions on proof trees are just proof trees themselves, these functions can easily be extended to act on proof trees which are themselves understood as functions.  The corresponding system of ordinals parallels this, using higher order collapsing function.
\end{abstract}

\maketitle

\section{Introduction}

In this paper, we give an ordinal analysis of second-order arithmetic.\footnote{The current draft represents an in-progress version of this work. The ordinal analysis is currently incomplete: we give a ``qualitative'' ordinal analysis, working in a strong theory and proving the existence of certain ordinal bounds without calculating them explicitly. A future version of this paper will include an explicit ordinal notation system. In addition, certain proofs are currently only sketched and a great deal of exposition is missing.}  We follow the conventional approach in infinitary ordinal analysis as introduced in \cite{MR45673} (see, for instance, \cite{Pohlers2008-hh} for a modern exposition): we begin with a finitary proof in second-order arithmetic, transfer it to a proof in an infinitary system, and show that we can transform any such proof into a cut-free proof, and further that there can be no cut-free proof of $0=1$. This allows us to conclude that second-order arithmetic.

Such a proof can, of course, only be given in a sufficiently strong system. This strength appears in our proof through the use of ordinal bounds: what we show is that our infinitary system has a cut-free proof with a certain kind of ordinal bounds. We then use the well-foundedness of the system of ordinal bounds to conclude that there is no such proof of $0=1$. The goal of ordinal analysis is, roughly, to calibrate the strength of the theory by identifying the ordinal needed to carry out such a proof \cite{MR2275588,MR1720577,MR1943301}.

An ordinal analysis of second-order arithmetic, using more purely finitary techniques, was also recently announced by Arai \cite{arai2024ordinal}. Our approach here was developed independently and uses rather different methods.

The main challenge is how to deal with second-order quantifiers. Buchholz's $\Omega$ rule \cite{MR1943302,MR969597} gives an approach to eliminating cuts between a $\Pi^1_1$ and $\Sigma^1_1$ statement. In this approach, the way we prove\footnote{As is customary, we use a one-sided sequent calculus, so by a proof of a finite set of formulas, we mean a proof that at least one of these formulas holds.} $\exists^2X\,phi(X),\Sigma$ is to give a function which transforms a proof of $\neg\phi(X),\Delta$ (with suitable restrictions) into a proof of $\Delta\Sigma$.

A proof of $\exists^2X\,\phi(X),\Sigma$ in a finitary system might derive this from some $\phi(A),\Sigma$. Suppose we have already, inductively, translated the proof of $\phi(A),\Sigma$ into our infinitary system. Then we could hope to produce such a function by taking a proof of $\neg\phi(X),\Delta$, replacing $X$ with $A$ everywhere, and then using the Cut rule to combine our proof of $\phi(A),\Sigma$ with our proof of $\neg\phi(A),\Delta$.

To generalize this to a $\Sigma^1_2$ formula, however, we face a new difficulty: we would like a proof of $\exists^2X\forall^2Y\,\phi(X,Y),\Sigma$ to be a function which acts on proofs of $\exists^2Y\,\neg\phi(X,Y),\Delta$. When we try to produce such a function, however, we want to be able to convert a proof of $\exists^2Y\,\neg\phi(X,Y),\Delta$ into a proof of $\exists^2Y\,\neg\phi(A,Y),\Delta$. It is not immediately clear how to do this: we are given a function whose domain is proofs of $\phi(X,Y),\Delta'$, but we need to produce a function whose domain is proofs of $\phi(A,Y),\Delta'$.

Our solution is to use the fact that the functions we need are not arbitrary functions.  The premises of the Buchholz $\Omega$ are, essentially, the \emph{graph} of the function we are using. Here, we use a modified $\Omega$ rule, which we call the $\Omega^\flat$ rule, which takes as its premise a certain description of the algorithm being used. In particular, these algorithms are always ``continuous'' \cite{Mints1978}, in the sense that the bottom part of the output is determined by the bottom part of the input.

The crucial insight is that these algorithms can be seen as proof-trees (that is, potentially ill-founded proofs): we have nodes in our tree (that is, we have a special inference rule) where we can look at a position in the input deduction, and the branches of our tree (that is, the premises of this inference rule) are the possible rules that the input deduction could have at that position.

Such a proof-tree is not well-founded; the right property to ask for is that it transforms well-founded proofs to well-founded proofs. This is represented by assigning a kind of higher-order ordinal bound to the tree.\footnote{Although we do not explore this here, we expect that these higher-order ordinal bounds are essentially dilators\cite{MR656793}.}

So a proof of $\exists^2Y\,\neg\phi(X,Y),\Delta$ in our infinitary proof system will be an \emph{ill-founded proof-tree} which describes the process by which a proof of $\phi(X,Y),\Delta'$ is transformed into a proof of $\Delta'\Delta$. We are then able to transform this proof-tree into a proof of $\exists^2Y\,\neg\phi(A,Y),\Delta$, which is precisely what we need to do.

Because we are representing our functions on proof-trees by new proof-trees, we can simply iterate this idea. Along the way, we need more and more complicated ordinal bounds---bounds which guarantee that we transform higher-order ordinal bounds into higher-order ordinal bounds, and so on.\footnote{Again, we expect a fundamental connection to the Girard's ptykes \cite{MR903244} which is not explored here.}

\section{Second Order Arithmetic}

\subsection{Language}

Our ultimate interest is the theory of second order arithmetic, which is in the language $\mathcal{L}_2$. We take this language to be defined as follows.
\begin{definition}
  The symbols of $\mathcal{L}_{2}$ consist of:
  \begin{itemize}
  \item an infinite set $Var=\{x,y,z,\ldots\}$ for first-order variables,
  \item a constant symbol $0$,
  \item a unary function symbol $\mathrm{S}$,
  \item for every $n$-ary primitive recursive relation $R$, an $n$-ary predicate symbol $\dot{R}$,
  \item an infinite set $Var_2=\{X,Y,Z,\ldots\}$ for second-order variables,
  \item the unary connective $\neg$,
  \item two binary connectives, $\wedge$ and $\vee$,
  \item two first-order quantifiers, $\forall$ and $\exists$,
  \item two second-order quantifiers $\forall^2$ and $\exists^2$,
  \end{itemize}
  where all symbols are distinct.

  The terms are given by:
  \begin{itemize}
  \item Each first-order variable is a term,
  \item $0$ is a term,
  \item if $t$ is a term, $St$ is a term.
  \end{itemize}

  The atomic formulas are defined by:
  \begin{itemize}
  \item whenever $\dot{R}$ is a symbol for an $n$-ary primitive recursive relation and $t_1,\ldots,t_n$ are terms, there is an atomic formula $\dot{R}t_1\cdots t_n$,
  \item whenever $X$ is a second order variable and $t$ is a term, $Xt$ is a atomic formula.
  \end{itemize}

  We define formulas as follows:
  \begin{itemize}
  \item whenever $\eta$ is an atomic formula, $\eta$ and $\neg\eta$ are formulas, which we call \emph{literals},
  \item whenever $\phi$ and $\psi$ are formulas, $\phi\wedge\psi$ and $\phi\vee\psi$ are formulas,
  \item whenever $\phi$ is a formula and $x$ is a first-order variable, $\forall x\,\phi$ and $\exists x\,\phi$ are formulas,
  \item whenever $\phi$ is a formula and $X$ is a second-order variable, $\forall^2 X\,\phi$ and $\exists^2 X\,\phi$ are formulas.
  \end{itemize}
\end{definition}

\begin{definition}
  We define the negation of a formula, ${\sim}\phi$, inductively by:
  \begin{itemize}
  \item if $\sigma$ is an atomic formula, ${\sim}\sigma$ is $\neg\sigma$ and ${\sim}\neg\sigma$ is $\sigma$,
  \item ${\sim}(\phi\wedge\psi)$ is $({\sim}\phi)\vee({\sim}\psi)$,
  \item ${\sim}(\phi\vee\psi)$ is $({\sim}\phi)\wedge({\sim}\psi)$,
  \item ${\sim}\forall x\,\phi$ is $\exists x\,{\sim}\phi$,
  \item ${\sim}\exists x\,\phi$ is $\forall x\,{\sim}\phi$,
  \item ${\sim}\forall^2 X\,\phi$ is $\exists^2 X\,{\sim}\phi$,
  \item ${\sim}\exists^2 X\,\phi$ is $\forall^2 X\,{\sim}\phi$.
  \end{itemize}
\end{definition}

\subsection{Inference Rules}

\begin{definition}
  A \emph{sequent} is a set of formulas.
\end{definition}
This definition, together with our choices below, builds the usual structural rules (contraction, weakening, and exchange) into the definition of a deduction.

\begin{definition}
  An \emph{inference rule} is a tuple $\mathcal{R}=(I,\Delta,\{\Delta_\iota\}_{\iota\in I},\{E_\iota\}_{\iota\in I})$ where $I$ is a set, $\Delta$ is a sequent, $\{\Delta_\iota\}_{\iota\in I}$ is a set of sequents indexed by $I$, and each $E_\iota$ is a subset of $Var\cup Var_2$.
\end{definition}
We usually write inference rules using the notation

\AxiomC{$\cdots$}
\AxiomC{$\Delta_\iota$}
\AxiomC{$\cdots\ (\iota\in I)$}
\LeftLabel{$\mathcal{R}$}
\RightLabel{$!\{E_\iota\}!$}
\TrinaryInfC{$\Delta$}
\DisplayProof

When $|I|=1$, we simply write $!E!$ on the right hand side. When $|E|=1$, we write $!x!$ or $!X!$ in place of $!\{x\}!$ or $!\{X\}!$, respectively. When $E=\emptyset$, we omit the right-hand label entirely.

The label $\mathcal{R}$ is the name of the inference rule, the set $I$ is the \emph{premises of $\mathcal{R}$}, the sequent $\Delta$ is the \emph{conclusion sequent}, the sequent $\Delta_\iota$ is the \emph{premise sequent at $\iota$}, and the sets $E_\iota$ are the \emph{eigenvariables}. When $\mathcal{R}$ is an inference rule, we write $|\mathcal{R}|$ for its premises and $\Delta(\mathcal{R})$, $\Delta_\iota(\mathcal{R})$ for the corresponding sequents. We write $Eig_\iota(\mathcal{R})$ for the set of eigenvariables, again omitting $\iota$ when $|I|=1$.

It is our assumption that the sets $I$, across all rules ever considered, are disjoint---that is, given $\iota\in |\mathcal{R}|$, we can recover the rule $\mathcal{R}$ from $\iota$, and we denote it $\mathcal{R}(\iota)$. We will often equivocate notationally, writing rules whose sets of premises are $\mathbb{N}$, $\{L,R\}$, or $\{\top\}$. We always take it as given that the set of premises is \emph{really} some set of pairs from $\{t\}\times\mathbb{N}$ or $\{t\}\times\{L,R\}$ or the like, where $t$ is a label for the rule.

For reasons that will become clear, our perspective on deductions emphasizes, not that they are built from axioms in a well-founded way, but that they are ``read'' from the root up. In particular, our basic object is a \emph{proof-tree}, the co-well-founded analog of a deduction.

\begin{definition}
  A \emph{theory} is a set of inference rules.

  When $\mathfrak{T}$ is a theory, a \emph{proof-tree in $\mathfrak{T}$} is a function $d$ where:
  \begin{itemize}
  \item the range of $d$ is $\mathcal{T}$,
  \item $\langle\rangle\in\dom(d)$,
  \item if $\sigma\in\dom(d)$ and $\iota\in |d(\sigma)|$ then $\sigma\iota\in\dom(d)$.
  \end{itemize}

  For any $\sigma\in\dom(d)$, we define $\Gamma(d,\sigma)$ to be the set of $\phi$ such that there is some $\tau$ with $\sigma\tau\in\dom(d)$, $\phi\in\Delta(d(\sigma\tau))$, and for all $\tau'\iota\sqsubseteq\tau$, $\phi\not\in\Delta_\iota(d(\sigma\tau'))$. We write $\Gamma(d)$ for $\Gamma(d,\langle\rangle)$.
\end{definition}

As usual, $\Gamma(d)$ is called the \emph{conclusion} of $d$.

\begin{definition}
  When $d$ is a proof-tree and $\sigma\in\dom(d)$, we write $d_\sigma$ for the \emph{sub-tree}  given by $d_\sigma(\tau)=d(\sigma\tau)$.
\end{definition}

It is sometimes natural to consider some $\sigma$ which could be in the domain of a deduction without referring to the specific deduction it comes from.

\begin{definition}
  When $\mathfrak{T}$ is a theory, a $\mathfrak{T}$-sequence is a finite sequence $\sigma$ such that each element of $\sigma$ is some $\iota$ with $\mathcal{R}(\iota)\in\mathfrak{T}$.  We define $\Delta(\langle\rangle)=\emptyset$ and $\Delta(\sigma\iota)=\Delta_\iota(\mathcal{R}(\iota))$.

  When $\sigma$ is a $\mathfrak{T}$-sequence, we define $\Gamma^{\leftarrow}(\sigma)$ inductively by $\Gamma^{\leftarrow}(\langle\rangle)=\emptyset$ and $\Gamma^{\leftarrow}(\sigma\iota)=(\Gamma^{\leftarrow}(\langle\rangle)\setminus\Delta(\mathcal{R}(\iota)))\cup\Delta_\iota(\mathcal{R}(\iota))$.
\end{definition}
We may think of $\Gamma^{\leftarrow}(\sigma)$ as representing the ``formulas permitted at the top of $\sigma$'': if $\sigma\in\dom(d)$ then $\Gamma(d,\sigma)\subseteq\Gamma(d,\langle\rangle)\cup\Gamma^\leftarrow(\sigma)$. (It is calculated, of course, by following the rules for calculating the conclusion ``backwards''.)

\begin{definition}
  A proof-tree $d$ is \emph{valid} if, for every $\sigma\in\dom(d)$ and every $\iota\in|d(\sigma)$, no element of $Eig_\iota(d(\sigma))$ appears free in any formula in $\Gamma(d,\sigma\iota)$.
\end{definition}
Throughout this paper, all proof-trees will be assumed to be valid.

\begin{definition}
  A proof-tree $d$ is a \emph{deduction} if $d$ is well-founded.
\end{definition}
Equivalently, $d$ is well-founded if there is an assignment of ordinals, $o^d:\dom(\sigma)\rightarrow Ord$ such that $o^d(\tau)<o^d(\sigma)$ whenever $\sigma\sqsubsetneq\tau$.

\subsection{Second Order Arithmetic}

The theory $PA_2$ contains the following rules.

\AxiomC{}
\LeftLabel{True$_{\dot{R}t_1\cdots t_n}$}
\UnaryInfC{$\eta$}
\DisplayProof
where $t_1,\cdots, t_n$ are closed, $\eta\in\{\dot{R}t_1\cdots t_n,\neg\dot{R}t_1\cdots t_n\}$, and $\eta$ is true.

\bigskip

\AxiomC{}
\LeftLabel{Ax$_{\{\eta,{\sim}\eta\}}$}
\UnaryInfC{$\eta,\neg \eta$}
\DisplayProof
where $\eta$ is a literal.

\bigskip

\AxiomC{$\phi$}
\AxiomC{$\psi$}
\LeftLabel{I$\wedge_{\phi\wedge\psi}$}
\BinaryInfC{$\phi\wedge\psi$}
\DisplayProof

\bigskip

\AxiomC{$\phi$}
\LeftLabel{I$\vee^L_{\phi\vee\psi}$}
\UnaryInfC{$\phi\vee\psi$}
\DisplayProof
\quad\quad\quad
\AxiomC{$\psi$}
\LeftLabel{I$\vee^R_{\phi\vee\psi}$}
\UnaryInfC{$\phi\vee\psi$}
\DisplayProof

\bigskip

\AxiomC{$\phi(y)$}
\LeftLabel{I$\forall^y_{\forall x\phi}$}
\RightLabel{$!y!$, where $y$ is substitutable for $x$ in $\phi$}
\UnaryInfC{$\forall x\,\phi$}
\DisplayProof

\bigskip

\AxiomC{$\phi(t)$}
\LeftLabel{I$\exists^t_{\exists x\,\phi}$}
\RightLabel{where $t$ is substitutable for $x$ in $\phi$}
\UnaryInfC{$\exists x\,\phi$}
\DisplayProof

\bigskip

\AxiomC{$\phi(Y)$}
\LeftLabel{I$\forall^Y_{\forall^2 X\phi}$}
\RightLabel{$!Y!$, where $Y$ is substitutable for $X$ in $\phi$}
\UnaryInfC{$\forall^2 X\,\phi$}
\DisplayProof

\bigskip

\AxiomC{$\phi[X\mapsto \psi]$}
\LeftLabel{I$\exists^\psi_{\exists^2X\,\phi}$}
\RightLabel{where $\psi$ is substitutable for $X$ in $\phi$}
\UnaryInfC{$\exists^2X\,\phi$}
\DisplayProof

  \bigskip

\AxiomC{}
\LeftLabel{Ind$^t_{\phi(x)}$}
\RightLabel{where $t$ is substitutable for $x$ in $\phi$}
\UnaryInfC{${\sim}\phi(0), \exists x\,(\phi(x)\wedge {\sim}\phi(Sx)), \phi(t)$}
\DisplayProof

\bigskip

\AxiomC{$\phi$}
\AxiomC{${\sim}\phi$}
\LeftLabel{Cut$_{\phi}$}
\BinaryInfC{$\emptyset$}
\DisplayProof

\smallskip

When one of these rules has a single premise, we call it $\top$. The two premises of the I$\wedge$ are $L$ and $R$, for left and right. The two premises of the Cut rule are $\top$ and $\bot$.

We write $PA_2\vdash\Sigma$ if there is a valid deduction $d$ in $PA_2$ with $\Gamma(d)\subseteq\Sigma$. We write $PA_{2,<0}$ for the fragment of $PA_2$ omitting Cut, and similarly write $PA_{2,<0}\vdash\Sigma$ if there is a valid deduction $d$ in $PA_{2,<0}$ with $\Gamma(d)\subseteq\Sigma$. An ordinal analysis, such as the one we will give below, shows, among other things, that $PA_2\not\vdash\emptyset$.

\section{Local Functions on Proof-Trees}

In this section, we describe certain kinds of functions from proof-trees to proof-trees, namely those which are defined ``one rule at a time'' starting from the root.

Our motivating example is, of course, the cut-elimination operation itself. It is an observation going back to \cite{Mints1978,MR1943302} that (at least when we have the Rep rule \AxiomC{$\emptyset$}\UnaryInfC{$\emptyset$}\DisplayProof), if $d$ is a deduction and $d'$ is the result of applying the cut-elimination operators to $d$, then the bottom part of $d'$ is determined by a suitable bottom part of $d$.

Here we introduce some machinery, building on \cite{MR1943302} and \cite{towsner:MR2499713}, for representing this operation as a proof-tree itself.

\subsection{Encoding Local Functions}

To deal with proof-trees that represent functions, we want to introduce a new kind of ``formula'', which we call a \emph{tag}.  When $F$ is a proof-tree, a tag in $\Gamma(F)$ indicates that $F$ should be ``interpreted as a function''. (Because weakening is built into our system, note that we can then interpret \emph{any} proof-tree as a function: if there is no tag in $\Gamma(d)$ then $d$ will represent a constant function.)

\begin{definition}
  A \emph{tag} is a triple $(\Theta_0,\epsilon,\Theta)$ where $\Theta_0\subseteq\Theta$ are sequents and $\epsilon$ is a $\mathfrak{T}$-sequence for some $\mathfrak{T}$. We call $\Theta_0$ the \emph{root} of the tag.

  When $t=(\Theta_0,\epsilon,\Theta)$, we write $\mathsf{r}(t)$ for $\Theta_0$, $\epsilon(t)$ for $\epsilon$, and $\Theta(t)$ for $\Theta$.

\end{definition}
The tag we expect to see in $\Gamma(F)$ is $(\Theta_0,\langle\rangle,\Theta_0)$. At some internal position $\Gamma(F,\sigma)$, we might instead see some $(\Theta_0,\epsilon,\Theta)$; this essentially means that, at position $\sigma$ in $F$, we have ``looked at'' the part of the input up to $\epsilon$. The sequent $\Theta_0$ will be removed from the conclusion of the input.

The link between the sequent $\Theta_0$ and the theory $\mathfrak{T}$ will be built into our proof system.

We need to expand our sequents a bit.
\begin{definition}
  An \emph{extended sequent} is a pairs of sets $\Delta;\Upsilon$ where $\Delta$ is a sequent and $\Upsilon$ is a set of tags. We write $\cdot_{\mathsf{t}}$ for the tag part of a sequent, so $(\Delta;\Upsilon)_{\mathsf{t}}=\Upsilon$.

  When $\Delta;\Upsilon$ is an extended sequent and $(\mathfrak{T},\Theta_0,\epsilon,\Theta)$ is a tag, we take $\Delta;\Upsilon\setminus\{(\Theta_0,\epsilon,\Theta)\}$ to mean $\Delta;(\Upsilon\setminus\{(\Theta_0,\epsilon,\Theta')\mid\Theta\subseteq\Theta'\})$.
\end{definition}
That is, we abuse notation to understand removing a tag $(\Theta_0,\epsilon,\Theta)$ from an extended sequent to remove all tags of the form $(\Theta_0,\epsilon,\Theta')$ with $\Theta\subseteq\Theta'$.

We apply this convention, in particular, to the definition of $\Gamma(d,\sigma)$: $(\Theta_0,\epsilon,\Theta')\in\Gamma(d,\sigma)$ if there is a $\tau$ with $t\in\Delta(\tau)$ and for all $\tau'\iota\sqsubseteq\tau$, there is no $(\Theta_0,\epsilon,\Theta)\in\Delta_{\iota}(d(\sigma\tau'))$ with $\Theta\subseteq\Theta'$.

Corresponding to this, we extend the definition of a tag to include extended sequents. (More precisely, we have to define extended sequents and tags by a simultaneous induction: new tags give us new extended sequents, which give us yet more tags.)

The following rule is essential to representing local functions as proof-trees.

\AxiomC{$\Delta(\mathcal{R})\setminus\Theta; \{(\Theta_0,\epsilon\iota,\Theta)\mid \iota\in|\mathcal{R}|\}$}
\AxiomC{$(\mathcal{R}\in\mathfrak{T})$}
\LeftLabel{Read$_{\mathfrak{T},\Theta_0,\epsilon,\Theta}$}
\BinaryInfC{$\Delta(\epsilon)\setminus\Theta; (\Theta_0,\epsilon,\Theta)$}
\DisplayProof

Note that this rule branches over rules from $\mathfrak{T}$. Reading from the bottom up (as we often read our proof-trees), we interpret this rule to say: ``look at the input corresponding to the tag $\Theta_0$, in the position $\epsilon$; if the rule here is $\mathcal{R}$, follow the $\mathcal{R}$-branch of this rule''.

We need one slightly ad hoc restriction on proof-trees to support our notion of ordinal bounds later.
\begin{definition}
  When $d$ is a proof-tree, a \emph{consecutive Read} is a pair $(\sigma,\iota)$ with $\sigma\in\dom(d)$, $\sigma\iota\in\dom(d)$ so that $d(\sigma)=Read_{\mathfrak{T},t}$ and $d(\sigma\iota)=Read_{\mathfrak{T},t'}$ where $t$ and $t'$ share the same root.

  We say $d$ is \emph{free of consecutive Reads} if $d$ does not contain any consecutive Reads.
\end{definition}
We will always want our proof-trees to be free of consecutive Reads. (In practice, we can accomplish this by including the Rep rule in our system and using Rep rules to separate otherwise consecutive Reads.) Therefore, for the remainder of this paper, we assume proof-trees are free of consecutive Reads.

\begin{definition}
  A \emph{locally defined function from $\mathfrak{T}$ to $\mathfrak{T}'$} is a sequent $\Theta_0$ together with a proof-tree $F$ in $\mathfrak{T}'+\{\mathrm{Read}_{\mathfrak{T},\Theta_0,\epsilon,\Theta}\mid \Theta_0\subseteq\Theta\}$.

  More generally, a \emph{locally defined function from $\mathfrak{T}_1,\ldots,\mathfrak{T}_k$ in $\mathfrak{T}'$} is a sequence of sequents $\Theta_0^1,\ldots,\Theta_0^k)$ together with a proof-tree $F$ in $\mathfrak{T}'+\bigcup_{i\leq k}\{\mathrm{Read}_{\mathfrak{T}^i,\Theta^i_0,\epsilon,\Theta}\mid \Theta^i_0\subseteq\Theta\}$.
\end{definition}
We will see that we can interpret a locally defined function as a function from proof-trees in $\mathfrak{T}$ to proof-trees in $\mathfrak{T}'$. (Forbidding consecutive Reads is necessary to rule out a degenerate case.)

Note that a locally defined function is essentially just a proof-tree which we choose---via our choice of the root tag---to view as a function. We typically simply say $F$ is a locally defined function and write $\Theta_F$ for the associated root.

\begin{definition}
  Let $d$ be a proof-tree in $\mathfrak{T}$ and $F$ a locally defined function from $\mathfrak{T}$ to $\mathfrak{T}'$. We define a proof-tree $\bar F(d)$ in $\mathfrak{T}'$ along with auxiliary functions $h_0,h$ from $\dom(\bar F(d))$ to $\dom(F)$:
  \begin{itemize}
  \item $h_0(\langle\rangle)=\langle\rangle$,
  \item if $F(h_0(\sigma))=\mathrm{Read}_{\mathfrak{T},t}$ where $\mathsf{r}(t)=\Theta_F$ then $h(\sigma)=h_0(\sigma)d(\epsilon_t)$; otherwise $h(\sigma)=h_0(\sigma)$,
  \item $\bar F(d)(\sigma)=F(h(\sigma))$ and, for all $\iota\in|F(h(\sigma))|$, $h_0(\sigma\iota)=h(\sigma)\iota$,
  \end{itemize}
\end{definition}
That is, we copy over the proof-tree $F$, except that at $\mathrm{Read}$ rules we delete the rule and use $d$ to decide which branch to take. (If we had consecutive Read rules, we could have an infinite branch consisting entirely of Read rules which looks at the input but never places any rules, which would lead $\bar F(d)$ to be undefined.)

The key point is that the conclusion of $\bar F(d)$ is the combination of the conclusions of $d$ and the conclusion of $F$ in precisely the way we expect.
\begin{theorem}
  $\Gamma(\bar F(d))\subseteq (\Gamma(d)\setminus\Theta_F) \cup (\Gamma(F)\setminus\{(\Theta_F,\langle\rangle,\Theta_F)\}) $.
\end{theorem}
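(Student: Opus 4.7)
Take $\phi \in \Gamma(\bar F(d))$ with witness $\tau \in \dom(\bar F(d))$, so $\phi \in \Delta(\bar F(d)(\tau))$ and $\phi \notin \Delta_\iota(\bar F(d)(\tau'))$ for each $\tau'\iota \sqsubseteq \tau$. The plan is to exhibit $\phi$ either as a witness for $\Gamma(F)$ (distinct from the root tag) or for $\Gamma(d)$ (not lying in $\Theta_F$). The auxiliary function $h$ sends $\tau$ to a position $h(\tau) \in \dom(F)$ with $F(h(\tau)) = \bar F(d)(\tau)$, and the path $\langle\rangle \sqsubseteq \tau$ in $\bar F(d)$ lifts to a path $\langle\rangle \sqsubseteq h(\tau)$ in $F$, inserting one extra step for each \emph{Read prefix}---a prefix $\tau'$ for which $F(h_0(\tau'))$ is a Read rule with root $\Theta_F$. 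Unpacking the definitions, the hypothesis $\phi \notin \Delta_\iota(\bar F(d)(\tau'))$ translates to $\phi \notin \Delta_\iota(F(h(\tau')))$, which handles every step of the lifted $F$-path other than the Read-rule crossings themselves.

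I then split on whether $\phi$ is also absent from the premise sequent of each crossed Read rule (at the actually-chosen branch $\mathcal{R} = d(\epsilon_t)$). In the affirmative case, the witness $h(\tau)$ gives $\phi \in \Gamma(F)$, and $\phi \neq (\Theta_F,\langle\rangle,\Theta_F)$ follows because the root tag can only appear in the conclusion of a Read rule with $\epsilon=\langle\rangle$ and $\Theta=\Theta_F$, which is excluded for $F(h(\tau))$ by the case assumption (non-Read prefix) together with the no-consecutive-Reads restriction (Read prefix). In the negative case, pick the topmost Read prefix $\tau^*$ at which $\phi$ lies in the chosen Read premise, with tag $t^* = (\Theta_F,\epsilon^*,\Theta^*)$ and branch $\mathcal{R}^* = d(\epsilon^*)$; unpacking the Read premise $\Delta(\mathcal{R}^*)\setminus\Theta^*$ together with $\{(\Theta_F,\epsilon^*\iota,\Theta^*) \mid \iota \in |\mathcal{R}^*|\}$, a non-tag $\phi$ yields $\phi \in \Delta(d(\epsilon^*))$---the candidate witness for $\Gamma(d,\langle\rangle)$---and $\phi \notin \Theta^* \supseteq \Theta_F$.

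To complete the non-tag subcase I must verify $\phi \notin \Delta_\iota(d(\epsilon))$ for every $\epsilon\iota \sqsubseteq \epsilon^*$. The chain of Read rules on the $F$-path strictly below $\tau^*$ has tags whose $\epsilon$-components extend one $d$-index at a time, tracing out exactly the $d$-path from $\langle\rangle$ to $\epsilon^*$; the topmost choice of $\tau^*$ forces $\phi$ to propagate as a premise-level tag-formula through each such intermediate Read, and a would-be discharge of $\phi$ at the $d$-step $\epsilon \mapsto \epsilon\iota$ would erase $\phi$ from the corresponding refined tag-formula at the next Read, contradicting this propagation. The tag subcase (where $\phi$ itself is one of the refined tags $(\Theta_F,\epsilon^*\iota,\Theta^*)$) follows the same template. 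The main obstacle will be precisely this final verification: carefully translating membership of $\phi$ along a chain of Read premises in $F$ into the non-discharge conditions along the parallel $d$-path, and in particular making the maximality argument for $\tau^*$ airtight. The no-consecutive-Reads assumption plays an essential role in ensuring this correspondence between Read-rule chains in $F$ and paths in $d$ is well-behaved.
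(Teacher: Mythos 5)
Your affirmative case (if $\phi$ is absent from the chosen premise of every crossed Read, then $h(\tau)$ witnesses $\phi\in\Gamma(F)$, and the root tag is excluded since it could only occur as the conclusion of a Read reading $\langle\rangle$) is correct. The negative case, however, contains a genuine error, and it is exactly at the step you flag. Your ``propagation/erasure'' claim is backwards: if $\phi\in\Delta_\iota(d(\epsilon))$ for some $\epsilon\iota\sqsubseteq\epsilon^*$, then the Read rule on the $F$-path whose tag reads position $\epsilon\iota$ has conclusion sequent $\Delta(\epsilon\iota)\setminus\Theta'=\Delta_\iota(d(\epsilon))\setminus\Theta'$, so such a discharge makes $\phi$ \emph{appear} in the conclusion of that intermediate Read; nothing gets erased, and the maximality of $\tau^*$ says nothing about the Reads strictly below $\tau^*$, so no contradiction is available. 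Worse, what you are trying to prove in this case can simply be false: take $d(\langle\rangle)$ with conclusion $\{\chi\}$ and a single premise $a$ with premise sequent $\{\phi\}$, take $d(\langle a\rangle)$ with conclusion $\{\phi\}$, and let $F$ (with $\Theta_F=\emptyset$ and all Read $\Theta$'s empty) read $\langle\rangle$, place a Rep, read $\langle a\rangle$, and then place a rule concluding $\phi$. Then $\phi\in\Gamma(\bar F(d))$, the (unique, hence topmost) crossed Read with $\phi$ in its chosen premise is the one reading $\epsilon^*=\langle a\rangle$, but $\phi\in\Delta_a(d(\langle\rangle))$, so $\epsilon^*$ is not a witness and indeed $\phi\notin\Gamma(d)$ at all; the formula belongs to $\Gamma(F)$ (witnessed by the conclusion $\Delta(\langle a\rangle)\setminus\emptyset$ of the second Read), which your negative case never attempts to establish.

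The repair is a different arrangement of the cases, which is what the paper does: first dispose of the case $\phi\in\Gamma(F)$ outright; only then pass to $F$ via $h$ and choose $\sigma'\sqsubseteq h(\sigma)$ \emph{minimal} with $\phi\in\Delta(F(\sigma'))$. Since $\phi\notin\Gamma(F)$, $\phi$ must be removed somewhere below $\sigma'$, and your (correct) translation of the $\bar F(d)$-witness property shows this removal can only happen at a Read with root $\Theta_F$ at its chosen branch $d(\epsilon)$; this yields $\phi\in\Delta(d(\epsilon))\setminus\Theta$, hence $\phi\notin\Theta_F$, with $\epsilon$ the candidate witness in $d$. Now a putative discharge $\phi\in\Delta_\iota(d(\epsilon'))$ with $\epsilon'\iota\sqsubseteq\epsilon$ puts $\phi$ into the conclusion of an earlier Read on the chain (whose $\Theta'\subseteq\Theta$, so $\phi\notin\Theta'$), i.e.\ into $\Delta(F(\tau'))$ for some $\tau'\sqsubsetneq\sigma'$, contradicting the minimality of $\sigma'$. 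So the device that closes the argument is minimality of the \emph{first conclusion-appearance} of $\phi$ along the $F$-path---usable only after the $\Gamma(F)$ case has been removed---not maximality of the last premise-appearance. Your coherence observations about the Read chain (positions extend one $d$-index at a time, $\Theta$-components only grow) are exactly the right supporting facts, but they must feed into that minimality argument rather than the one you propose.
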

\begin{proof}
  Suppose $\phi\in\Gamma(\bar F(d))$, so there is some $\sigma\in \dom(\bar F(d))$ so that $\phi\in\Delta(\bar F(d)(\sigma))$ and $\phi\not\in\Delta_\iota(\bar F(d)(\tau))$ for any $\tau\iota\sqsubseteq\sigma$. 

  If $\phi\in\Gamma(F)$, we are done, so suppose not. We have $\phi\in \Delta(F(h(\sigma)))$. Choose $\sigma'\sqsubseteq h(\sigma)$ minimal so that $\phi\in\Delta(F(\sigma'))$. Since $\phi$ is not removed at any rule of $\bar F(d)$ below $\sigma$, $\phi$ must be removed at some Read$_{\mathfrak{T},\Theta_F,\epsilon,\Theta}$ inference below $\sigma'$, so $\phi\in\Delta(\mathcal{R})\setminus\Theta$ where $\mathcal{R}$ is $d(\epsilon)$ for some $\epsilon\in\dom(d)$.

  Suppose there is some $\epsilon'\iota\sqsubseteq\epsilon$ with $\phi\in\Delta_\iota(d(\epsilon'))$. Then there is some corresponding $\tau\sqsubseteq \sigma'$ so that $F(\tau)$ is Read$_{\mathfrak{T},\Theta_F,\epsilon',\Theta'}$ for some $\Theta'\subseteq\Theta$. Therefore $\phi\not\in\Theta'$, so $\phi\in\Delta(F(\tau))$, contradicting the minimality of $\sigma'$.

  Therefore $\phi\in\Gamma(d)$.
\end{proof}

\subsection{Extensions of Locally Defined Function}

One of the crucial features of locally defined functions is that there is a natural way to extend them to apply to larger theories, simply by acting as the identity on new rules. Given a locally defined function $F$ from $\mathfrak{T}$ and some other theory $\mathfrak{T}^+$ (presumably extending $\mathfrak{T}$, though we don't need this assumption), we can try to extend $F$ to $\mathfrak{T}^+$ by assuming $F$ ``does nothing'' on rules in $\mathfrak{T}^+\setminus\mathfrak{T}$.

\begin{definition}\label{def:lifting}
  Given a locally defined function $F$ on $\mathfrak{T}$ with tag $t$, we define the \emph{lift} of $F$ to $\mathfrak{T}^+$, $F^\uparrow$, by induction as follows, along with functions $h:\dom(F^\uparrow)\rightarrow\dom(F)$ and $\pi_\sigma: I_\ast(s_t(h(\sigma)))\rightarrow I_\ast(s_t(\sigma))$:
  \begin{itemize}
  \item $h(\langle\rangle)=\langle\rangle$,
  \item $\pi_{\langle\rangle}(\langle\rangle)=\langle\rangle$,
  \item if $F(h(\sigma))$ is a Read$_{\mathfrak{T},\Theta_F,\epsilon,\Theta}$ rule then:    
    \begin{itemize}
    \item $F^\uparrow(\sigma)$ is Read$_{\mathfrak{T}^+,\Theta_F,\pi_\sigma(\epsilon),\Theta}$,
    \item for each $\mathcal{R}$ in $\mathfrak{T}$,
      \begin{itemize}
      \item  $h(\sigma\mathcal{R})=h(\sigma)\mathcal{R}$,
      \item for each $\epsilon'\in I_\ast(s_t(h(\sigma)\mathcal{R}))$, if $\epsilon'\in I_\ast(s_t(h(\sigma)))$ then $\pi_{\sigma\mathcal{R}}(\epsilon')=\pi_\sigma(\epsilon')$, and if $\epsilon'$ is $\epsilon\iota$ for some $\iota\in|\mathcal{R}|$ then $\pi_{\sigma\mathcal{R}}(\epsilon')=\pi_\sigma(\epsilon)\iota$,
      \end{itemize}
    \item for each $\mathcal{R}$ in $\mathfrak{T}^+\setminus\mathfrak{T}$ other than a Read rule:
      \begin{itemize}
      \item  $F^\uparrow(\sigma\mathcal{R})=\mathcal{R}$,
      \item for $\iota\in|\mathcal{R}|$, $h(\sigma\mathcal{R}\iota)=h(\sigma\mathcal{R})=h(\sigma)$,
      \item for each $\epsilon'\in I_\ast(s_t(h(\sigma\mathcal{R}\iota)))=I_\ast(s_t(h(\sigma)))$, if $\epsilon'\neq\epsilon$ then $\pi_{\sigma\mathcal{R}\iota}(\epsilon')=\pi_\sigma(\epsilon')$, and $\pi_{\sigma\mathcal{R}\iota}(\epsilon)=\pi_\sigma(\epsilon)\iota$,
      \end{itemize}
    \item for each Read$_{\mathfrak{T}',\Theta',\epsilon',\Theta''}$ in $\mathfrak{T}^+\setminus\mathfrak{T}$:
      \begin{itemize}
      \item $F^\uparrow(\sigma\mathcal{R})$ is Read$_{\mathfrak{T}',\Theta',\epsilon',\Theta''\cup\Theta}$,
      \item for $\iota\in|\mathcal{R}|$, $h(\sigma\mathcal{R}\iota)=h(\sigma\mathcal{R})=h(\sigma)$,
      \item for each $\epsilon'\in I_\ast(s_t(h(\sigma\mathcal{R}\iota)))=I_\ast(s_t(h(\sigma)))$, if $\epsilon'\neq\epsilon$ then $\pi_{\sigma\mathcal{R}\iota}(\epsilon')=\pi_\sigma(\epsilon')$, and $\pi_{\sigma\mathcal{R}\iota}(\epsilon)=\pi_\sigma(\epsilon)\iota$,
      \end{itemize}
    \end{itemize}
  \item If $F(h(\sigma))$ is any other rule then
    \begin{itemize}
    \item $F^\uparrow(\sigma)=F(h(\sigma))$,
    \item $h(\sigma\iota)=h(\sigma)\iota$ for all $\iota\in |F(h(\sigma))|$,
    \item $\pi_{\sigma\iota}=\pi_\sigma$ for all $\iota\in|F(h(\sigma))|$,
    \end{itemize}
  \end{itemize}
\end{definition}

\begin{definition}
  Let $\mathcal{F}$ be a set of formulas. We say $\mathfrak{T}^+$ is \emph{rule-by-rule conservative over $\mathfrak{T}$ for $\mathcal{F}$} if:
  \begin{itemize}
  \item whenever $\mathcal{R}\in\mathfrak{T}^+\setminus\mathfrak{T}$ is not a Read inference, $\Delta(\mathcal{R})\cap\mathcal{F}=\emptyset$,
  \item if $\mathcal{R}\in\mathfrak{T}^+\setminus\mathfrak{T}$ is a Read inference then $\Delta(\mathcal{R})_{\mathsf{t}}\cap\mathcal{F}=\emptyset$.
  \end{itemize}
\end{definition}

\begin{lemma}\label{operator_extension}
  Let $F$ be a locally defined function on $\mathfrak{T}$ and suppose that for every Read$_{\mathfrak{T},\Theta_F,\epsilon,\Theta}$ rule appearing in $F$, $\mathfrak{T}^+$ is rule-by-rule conservative over $\mathfrak{T}$ for $\Theta$.

  Then $\Gamma(F^\uparrow)\subseteq\Gamma(F)$.
\end{lemma}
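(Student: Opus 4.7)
The plan is to fix $\phi \in \Gamma(F^\uparrow)$ witnessed by some $\sigma \in \dom(F^\uparrow)$, and produce a matching witness in $\dom(F)$ via the bookkeeping map $h$ from Definition \ref{def:lifting}. The natural candidate is $\sigma^{*} = h(\sigma)$; I must check both that $\phi \in \Delta(F(h(\sigma)))$ and that $\phi$ is not consumed anywhere on the path from $h(\sigma)$ down to $\langle\rangle$.

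First I would case-split on the form of $F^\uparrow(\sigma)$. If $F^\uparrow(\sigma) = F(h(\sigma))$ (a rule copied verbatim from $F$), the conclusion sequents coincide and the top condition is immediate. If $F^\uparrow(\sigma)$ is a modified Read $\mathrm{Read}_{\mathfrak{T}^+, \Theta_F, \pi_\sigma(\epsilon), \Theta}$ arising from a $\mathrm{Read}_{\mathfrak{T}, \Theta_F, \epsilon, \Theta}$ in $F$, then by the recursive construction of $\pi_\sigma$ the sequences $\pi_\sigma(\epsilon)$ and $\epsilon$ end with the same index, so $\Delta(\pi_\sigma(\epsilon)) = \Delta(\epsilon)$ and the non-tag parts of the two conclusions agree. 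The delicate case is the third: when $F^\uparrow(\sigma)$ is a freshly inserted rule $\mathcal{R} \in \mathfrak{T}^+ \setminus \mathfrak{T}$ (or a modified Read generated by such an $\mathcal{R}$), placed directly above the modified Read produced by the lifting. I would rule this case out. The parent modified Read's premise sequent at branch $\mathcal{R}$ is $\Delta(\mathcal{R}) \setminus \Theta;\ldots$, and the conservativity hypothesis, applied to the $\Theta$ of that parent, forces $\Delta(\mathcal{R}) \cap \Theta = \emptyset$ in the non-Read sub-case and the analogous tag-level identity in the Read sub-case. Hence any $\phi \in \Delta(F^\uparrow(\sigma))$ would automatically sit in $\Delta_\mathcal{R}$ of the parent, so $\phi$ would be consumed at the very next step down, contradicting the choice of $\sigma$.

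Once $\sigma$ is restricted to the first two cases, I would verify the non-consumption clause along the $F$-path. Each step of the path from $h(\sigma)$ down to $\langle\rangle$ in $F$ corresponds to a step (or a short block of steps, the extras being the Case-3 insertions just excluded) along the $F^\uparrow$-path. For copied non-Read rules the premise sequents $\Delta_\iota$ match exactly; for modified Reads they agree on non-tag formulas via the same $\Delta(\mathcal{R}) \setminus \Theta$ identity used above. So non-consumption of $\phi$ on the $F^\uparrow$-path transfers to non-consumption on the $F$-path. The main obstacle I expect is the bookkeeping of tags: relating modified tags $(\Theta_F, \pi_\sigma(\epsilon), \Theta)$ in $F^\uparrow$ to their unmodified counterparts $(\Theta_F, \epsilon, \Theta)$ in $F$, and correctly invoking the tag-part of conservativity to exclude the Read sub-case of Case 3, where a new tag $(\Theta', \epsilon', \Theta''\cup\Theta)$ must not collide with $\Theta$. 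These steps are bookkeeping-heavy but fit the same pattern, making essential use of the convention that tag removal respects the $\Theta$-superset ordering.
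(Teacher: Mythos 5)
Your overall strategy is the paper's argument in only slightly different clothing: the paper fixes $\phi\in\Delta(F^\uparrow(\sigma))$ with $\phi\notin\Gamma(F)$ and shows $\phi$ must be consumed at some $\tau\iota\sqsubseteq\sigma$, while you transfer a witness $\sigma$ directly to $h(\sigma)$; both hinge on the $h$-correspondence of paths, the agreement of premise sequents at copied rules and at $\mathfrak{T}$-branches of Reads, and conservativity at freshly inserted rules. Your Case 3 (a new rule sitting immediately above a lifted Read is consumed by that Read's premise sequent $\Delta(\mathcal{R})\setminus\Theta=\Delta(\mathcal{R})$, with the tag-superset convention handling the new-Read sub-case) is exactly the paper's second case.

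However, Case 2 as you state it contains a false claim. It is not true that $\pi_\sigma(\epsilon)$ and $\epsilon$ end with the same index: in Definition \ref{def:lifting}, whenever the lift steps over a rule $\mathcal{R}\in\mathfrak{T}^+\setminus\mathfrak{T}$ encountered in the input at the current read position, it sets $\pi_{\sigma\mathcal{R}\iota}(\epsilon)=\pi_\sigma(\epsilon)\iota$ with $\iota\in|\mathcal{R}|$, while $\epsilon$ itself is unchanged (since $h$ stalls). So after such a step the lifted Read at position $\sigma\mathcal{R}\iota$ reads position $\pi_\sigma(\epsilon)\iota$, and its conclusion is $\Delta_\iota(\mathcal{R})\setminus\Theta$, which need not equal $\Delta(\epsilon)\setminus\Theta$; the identity $\Delta(\pi_\sigma(\epsilon))=\Delta(\epsilon)$ on which your Case 2 rests simply fails there. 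The gap is local and repairable, but not by the argument you give: in exactly this configuration the non-tag part of the lifted Read's conclusion is contained in the premise sequent of the rule one step below (the copied $\mathcal{R}$ contributes $\Delta_\iota(\mathcal{R})$; in the new-Read sub-case the modified Read below contributes $\Delta(\mathcal{R}'')\setminus(\Theta''\cup\Theta)$ together with the tag, using the superset convention), so such a $\sigma$ can never be a witness and this situation must be folded into your Case 3 style of exclusion rather than into Case 2. This is in effect how the paper organizes things: it splits on whether $\Delta(F^\uparrow(\sigma))=\Delta(F(h(\sigma)))$, and for every position where the conclusions disagree it shows the offending formula is consumed at the immediately preceding step, rather than asserting any invariant of $\pi_\sigma$. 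Your final non-consumption transfer along the $F$-path is unaffected by this issue, since the premise sequents of a Read at branches $\mathcal{R}\in\mathfrak{T}$ are $\Delta(\mathcal{R})\setminus\Theta$ in both the original and the lifted rule, independently of the read position.
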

\begin{proof}
  Consider any $\sigma$ and any $\phi\in\Delta(F^\uparrow(\sigma))$ and suppose $\phi\not\in\Gamma(F)$. We will show that there is a $\tau\iota\sqsubseteq\sigma$ with $\phi\in\Delta_\iota(F^\uparrow(\tau))$.

  Suppose $\Delta(F(h(\sigma)))=\Delta(F^\uparrow(\sigma))$. Since $\phi\not\in\Gamma(F)$, there is some $\tau\iota\sqsubseteq h(\sigma)$ with $\phi\in\Delta_\iota(F(\tau))$; but there is a $\tau'\iota\sqsubseteq\sigma$ with $h(\tau')=\tau$ and $F^\uparrow(\tau')=F(\tau)$, and therefore $\phi\in\Delta_\iota(F^\uparrow(\tau'))$.

  Otherwise $F^\uparrow(\sigma)$ must be $\mathcal{R}$ for some $\mathcal{R}\in\mathfrak{T}^+\setminus\mathfrak{T}$, and therefore $\sigma=\sigma'\mathcal{R}$ where $F^\uparrow(\sigma')$ is a Read$_{\mathfrak{T}^+,\Theta_F,\epsilon,\Theta}$ inference. If $\mathcal{R}$ is not a Read inference then we have $\Delta(F^\uparrow(\sigma))=\Delta(\mathcal{R})$ and therefore $\Delta(F^{\uparrow}(\sigma))$ is disjoint from $\Theta$; but this means $\phi\in\Delta_{\mathcal{R}}(\sigma')$ as promised.

  If $\mathcal{R}$ is a Read inference in $\mathfrak{T}^+\setminus\mathfrak{T}$ then we have $\Delta(\mathcal{R})\cap\Theta=\emptyset$, and therefore again $\phi\in\Delta_{\mathcal{R}}(\sigma')$.
\end{proof}

\section{Infinitary Theory}

\subsection{Language and Theories}

We need to modify our language by requiring that free second-order variables have a ``level''. The level will be a natural number which roughly represents the depth of nested parameters in the construction of sets. Bound variables will not have a level.

\begin{definition}
  The language $\mathcal{L}_{2}^\infty$ is the language $\mathcal{L}_{2}$ with the following modification:
  \begin{itemize}
  \item the set $Var_2$ of second order variables is partitioned into infinitely many sets  $Var_2=\bigcup_{\ell\in\mathbb{N}\cup\{\ast\}} Var_2^\ell$ where each $Var_2^\ell=\{X^\ell,Y^\ell,Z^\ell,\ldots\}$ is infinite.
  \end{itemize}

\end{definition}

The second-order variables labeled by $\ast$ are the unleveled ones---that is, the variables that appear bound. We omit the $\ast$ level when writing variables, so free second-order variables should always have a level indicated as a superscript, while bound second-order variables have no superscript.

For formal reasons, we will need the Rep rule:

\AxiomC{$\emptyset$}
\LeftLabel{Rep}
\UnaryInfC{$\emptyset$}
\DisplayProof

\bigskip

As usual, our infinitary system will replace the I$\forall$ rule with the $\omega$ rule:
\AxiomC{$\cdots\phi(n)\cdots$}
\AxiomC{$(n\in\mathbb{N})$}
\LeftLabel{$\omega_{\forall x\,\phi}$}
\BinaryInfC{$\forall x\,\phi$}
\DisplayProof

Finally, we introduce the distinctive rule for our infinitary system, a modified form of the Buchholz $\Omega$ rule, which will let us include locally defined functions inside our deductions.

When $\mathfrak{T}$ is a theory, the $\Omega^\flat$ rule\footnote{We think of this as ``flattening'' Buchholz's version of the $\Omega$ rule: it has a single premise which is essentially an algorithm describing how to calculate a function, where the premises of the $\Omega$ rule essentially encode the graph of the function.} is

\AxiomC{$(\{{\sim}\phi(Y^\ell)\},\langle\rangle,\{{\sim}\phi(Y^\ell)\})$}
\LeftLabel{$\Omega^\flat_{Y^\ell,\exists^2X\,\phi}$}
\UnaryInfC{$\exists^2X\,\phi$}
\DisplayProof

As usual \cite{MR1943302}\cite{MR3490922}, we need a partner rule that combines this rule with a Cut inference:

\AxiomC{${\sim}\phi(Z^\ell)$}
\AxiomC{$(\{{\sim}\phi(Y^\ell)\},\langle\rangle,\{{\sim}\phi(Y^\ell)\})$}
\RightLabel{$!\{Z^\ell\},\emptyset!$}
\LeftLabel{Cut$\Omega^\flat_{ Z^\ell, Y^\ell,\exists^2X\,\phi}$}
\BinaryInfC{$\emptyset$}
\DisplayProof

The eigenvariables here indicate, of course, that $Z^\ell$ is an eigenvalue in the left premise and there are no eigenvariables in the right premise.

These are the only rules which can remove a tag from a premise without adding one in the conclusion. These allows us to have proof-trees which infer formulas from the existence of functions. We call the unique premise of $\Omega^\flat$ the $\bot$ premise\footnote{This is inconsistent with the way we have named the premises of other single premise rules, but reflects that we think of the premise of an $\Omega^\flat$ rule as an essentially negative thing, since it is a function on proofs of the positive thing.}, and then Cut$\Omega^\flat$ has two premises, $\top$ and $\bot$.

We introduce two measures of complexity on formulas. Roughly speaking, the \emph{depth} will measure the nesting of second-order quantifiers while the \emph{level} will measure how the formula depends on parameters. In particular, the formula $\exists^2X\,\phi(X,\forall^2Y\,\psi(X,Y))$ will have depth $2$ and level $0$, while the formula $\exists^2X\,\phi(X,\forall^2Y\,\psi(Y))$ will have depth $1$ and level also $1$---the first formula involves a nested quantifier, while the second involves a single quantifier relative to a parameter (the completed formula $\forall^2Y\,\psi(Y)$). (More precisely, we are interested in the quantities $dp(\phi,\{X\})$ and $lvl(\phi,\{X\})$ when considering a formula $\exists^2X\,\phi$.)

\begin{definition}
  If $\mathcal{S}$ is a set of second-order variables, $dp(\phi,\mathcal{S})$ is defined inductively by:
  \begin{itemize}
  \item when $\phi$ is a literal, $dp(\phi,\mathcal{S})=0$,
  \item when $\phi$ is $\psi_0\wedge\psi_1$ or $\psi_0\vee\psi_1$, $dp(\phi,\mathcal{S})=max\{dp(\psi_0,\mathcal{S}), dp(\psi_1,\mathcal{S})\}$,
  \item when $\phi$ is $\forall x\,\psi$ or $\exists x\,\psi$, $dp(\phi,\mathcal{S})=dp(\psi,\mathcal{S})$,
  \item if any variable in $\mathcal{S}$ is free in $\psi$ then $dp(\forall^2Y\,\psi,\mathcal{S})=dp(\exists^2Y\,\psi,\mathcal{S})=dp(\psi,\mathcal{S}\cup\{Y\})+1$,
  \item if no variable in $\mathcal{S}$ is free in $\psi$ then $dp(\forall^2Y\,\psi,\mathcal{S})=dp(\exists^2Y\,\psi,\mathcal{S})=0$.
  \end{itemize}

  $lvl(\phi,\mathcal{S})$ is defined inductively by:
    \begin{itemize}
    \item when $\phi$ is $X^\ell n$ or $\neg X^\ell n$ then $lvl(\phi,\mathcal{S})=\ell+1$,
    \item when $\phi$ is any other literal, $lvl(\phi,\mathcal{S})=0$,
  \item when $\phi$ is $\psi_0\wedge\psi_1$ or $\psi_0\vee\psi_1$, $lvl(\phi,\mathcal{S})=max\{lvl(\psi_0,\mathcal{S}), lvl(\psi_1,\mathcal{S})\}$,
  \item when $\phi$ is $\forall x\,\psi$ or $\exists x\,\psi$, $lvl(\phi,\mathcal{S})=lvl(\psi,\mathcal{S})$,
  \item if any variable in $\mathcal{S}$ is free in $\psi$ then $lvl(\forall^2Y\,\psi,\mathcal{S})=lvl(\exists^2Y\,\psi,\mathcal{S})=lvl(\psi,\mathcal{S}\cup\{Y\})$,
  \item if no variable in $\mathcal{S}$ is free in $\psi$ then $lvl(\forall^2Y\,\psi,\mathcal{S})=lvl(\exists^2Y\,\psi,\mathcal{S})=lvl(\psi,\emptyset)+1$.
  \end{itemize}
  We write $lvl(\phi)$ for $lvl(\phi,\emptyset)$.

Given a formula of the form $Q^2X\,\phi$, we write $comp(Q^2X\,\phi)$ for the pair $(dp(\phi,\{X\}),lvl(\phi,\{X\}))$, which we call the \emph{complexity} of $Q^2X\,\phi$.  We compare complexities in reverse lexicographic order: $(m',\ell')<(m,\ell)$ if $\ell'<\ell$ or $\ell'=\ell$ and $m'<m$.
\end{definition}

We need to build a suitable hierarchy of theories, because using these rules requires specifying suitable theories to use as the domains of our $\Omega^\flat$ and Cut$\Omega^\flat$ rules.

In order to keep our bounds suitably uniform, we want to work in theories with a global bound on the complexity. The theory $PA_{2,<0}^{\infty,N,L}$ will only include $\Omega^\flat$ and Cut$\Omega^\flat$ rules introducing formulas with complexity less than $(N,L)$. We do not attempt here to match the strength of these theories to natural finitary theories (in particular, while the theories $PA_{2,<0}^{\infty,N,L}$ as $L$ goes to infinity probably a strength similar to that of $\Pi^1_{N}\mhyphen CA_0$, we have not attempted to calibrate them to make this exact). (As motivation for the system of levels, note that the systems $PA_{2,<0}^{\infty,1,L}$ have roughly $L$ iterations of $\Pi^1_1$-comprehension---that is, $PA_{2,<0}^{\infty,1,L}$ is similar in strength to $ID_L$, though again, we have not attempted to make this exact.)

\begin{definition}
  \begin{itemize}
  \item the base theory $PA_{2,<0}^{\infty,N,L,-}$ consists of the following rules, subject to the constraints that first-order variables do not appear free, second-order variables of level $\ast$ do not appear free, and there are no quantifiers over variables $Y^\ell$:
    \begin{itemize}
    \item Rep,
    \item $\mathrm{True}_\eta$ where $\eta$ is a true literal,
    \item Ax, I$\wedge$, I$\vee$, $\omega$, I$\exists$, 
    \item I$\forall^{Y^{lvl(\forall^2X\,\phi)}}_{\forall^2X\,\phi}$,
    \item $\Omega^\flat_{Y^\ell,\exists^2X\,\phi}$ where $\ell$ is $lvl(\phi,\{X\})$,
    \end{itemize}
  \item $C^{\infty,m,\ell}$ is the theory containing only rules Cut$\Omega^\flat_{Z^{\ell},Y^{\ell},\exists^2X\,\phi}$ where $(m,\ell)=\mathrm{comp}(\exists^2X\,\phi)$,
  \item $R^{\infty,N,L,m,\ell}$ is the theory containing only rules Read$_{PA_{2,<0}^{\infty,N,L,m,\ell,-},{\sim}\phi(Y^{\ell}),\epsilon,\Theta}$  where $(m,\ell)=\mathrm{comp}(\exists^2X\,\phi)$,
  \item  $PA_{2,<0}^{\infty,N,L,m,\ell,-}$ is $PA_{2,<0}^{\infty,N,L,-}+\bigcup_{(m',\ell')<(m,\ell)}C^{\infty,m',\ell'}+\bigcup_{m'<m,\ell'\leq L}R^{\infty,N,L,m',\ell'}$,
  \item $PA_{2,<0}^{\infty,N,L,m,\ell}$ is $PA_{2,<0}^{\infty,n,m,\ell,-}+\bigcup_{m'\leq N,\ell'<\ell}R^{\infty,N,L,m',\ell'}$.
  \item $PA_{2,<r}^{\infty,N,L,m,\ell}$ is $PA_{2,<0}^{\infty,n,m,\ell}$ plus Cut rules over all formulas of rank $<r$,
  \item $PA_{2,<r}^{\infty,n<L}$ is $\bigcup_{m< N,\ell< L}PA_{2,<r}^{\infty,N,L,m,\ell}$.
  \end{itemize}
\end{definition}

The difference between $PA_{2,<0}^{\infty,N,L,m,\ell,-}$ and $PA_{2,<0}^{\infty,N,L,m,\ell}$ is the presence of Read rules for low level ($\ell'< \ell$) but high depth ($m'\geq m$) theories. We will never really prove things in $PA_{2,<0}^{\infty,N,L,m,\ell,-}$; rather, we will lift functions from the domain $PA_{2,<0}^{\infty,N,L,m,\ell,-}$ to the domain $PA_{2,<0}^{\infty,N,L,m,\ell}$. However this restriction will play a crucial role in our ordinal bounds.

\begin{lemma}
  For any $\phi$, $X$, letting $(m,\ell)=comp(\exists^2X\,\phi)$, for any $n\geq m$ and $L\geq \ell$ there is a locally defined function $\mathrm{Id}^{\phi}$ in $PA_{2,<0}^{\infty,N,L}$ so that $\Gamma(\mathrm{Id}^{\phi})\subseteq\phi;(\phi,\langle\rangle,\phi)$ where 
\end{lemma}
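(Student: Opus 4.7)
The plan is to construct $\mathrm{Id}^\phi$ as a ``mimicking'' proof-tree that reads its input position by position and outputs the same rule it finds there. I would define $\mathrm{Id}^\phi$ recursively so that at every ``Read-position'' $\sigma$ associated to an input-position $\epsilon$ I place a $\mathrm{Read}_{\mathfrak{T},\phi,\epsilon,\phi}$ rule; in its branch for the input-rule $\mathcal{R}$ I place a literal copy of $\mathcal{R}$; and above each premise $\iota\in|\mathcal{R}|$ of that copy I recurse, placing a fresh Read at input-position $\epsilon\iota$. The root Read sits at $\epsilon=\langle\rangle$ and carries the tag $(\phi,\langle\rangle,\phi)$, while the intervening copied rules between consecutive Reads keep the tree free of consecutive Reads. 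With this construction the identification $\bar{\mathrm{Id}^\phi}(d)=d$ is built in: each Read-position consumes one rule of $d$ and reproduces it.

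The verification of $\Gamma(\mathrm{Id}^\phi)\subseteq\phi;(\phi,\langle\rangle,\phi)$ is a direct propagation calculation. Each Read's premise sequent for the branch $\mathcal{R}$ is $\Delta(\mathcal{R})\setminus\phi$ together with the new tags $\{(\phi,\epsilon\iota,\phi)\mid\iota\in|\mathcal{R}|\}$; that set exactly blocks every formula other than $\phi$ sitting in the conclusion of the copied $\mathcal{R}$, and it also blocks each of the next-layer tags introduced by deeper Reads. What survives to the root is therefore the tag $(\phi,\langle\rangle,\phi)$ itself --- which lives in the root Read's own $\Delta$ --- together with the formula $\phi$ whenever some branch places a rule whose conclusion contains $\phi$. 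Such a branch is automatic: for any shape of $\phi$ the base theory $PA_{2,<0}^{\infty,N,L,-}$ contains an introduction rule for it (Ax if $\phi$ is a literal, and otherwise I$\wedge$, I$\vee$, $\omega$, I$\exists$, I$\forall^Y$, or $\Omega^\flat$, according to the principal connective of $\phi$), and in that branch the copied rule places $\phi$ into a $\Delta$ that the enclosing Read does not block.

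The main source of friction is not conceptual but bookkeeping: one has to verify that every Read actually used lives in the intended ambient theory $PA_{2,<0}^{\infty,N,L}$. The source theory of each Read will be some $PA_{2,<0}^{\infty,N,L,m',\ell',-}$, and the hypotheses $n\geq m$ and $L\geq\ell$ are precisely what ensure that the corresponding $R^{\infty,N,L,m',\ell'}$ theories are visible inside $PA_{2,<0}^{\infty,N,L}$. Validity of $\mathrm{Id}^\phi$ with respect to eigenvariables is essentially vacuous since we only copy existing input rules and never introduce binders that clash with formulas in the conclusion, so no further check is needed.
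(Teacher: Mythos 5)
Your construction is the same as the paper's: place $\mathrm{Read}_{PA_{2,<0}^{\infty,N,L,m,\ell,-},\phi,\epsilon,\phi}$ at each stage, copy the input rule $\mathcal{R}$ on the $\mathcal{R}$-branch, and recurse with a Read at $\epsilon\iota$ above each premise $\iota$, starting from the root Read at $\epsilon=\langle\rangle$. The paper's proof is exactly this recursion (with the propagation check for $\Gamma(\mathrm{Id}^\phi)\subseteq\phi;(\phi,\langle\rangle,\phi)$ left implicit, as you largely do), so your proposal is correct and takes essentially the same route.
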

\begin{proof}
We construct Id$^\phi$ by induction on $\sigma$. We set Id$^\phi(\langle\rangle)=Read_{PA_{2,<0}^{\infty,N,L,m,\ell,-},\phi,\langle\rangle,\phi}$.

  Given any $\sigma$ with Id$^\phi(\sigma)=Read_{PA_{2,<0}^{\infty,N,L, m,\ell,-},\phi,\epsilon,\phi}$, for each $\mathcal{R}$, we define Id$^\phi(\sigma\mathcal{R})=\mathcal{R}$, and for each $\iota\in|\mathcal{R}|$, we define Id$^\phi(\sigma\mathcal{R}\iota)=Read_{PA_{2,<0}^{\infty,N,L, m,\ell,-},\phi,\epsilon\iota,\phi}$.
\end{proof}

\begin{lemma}
  For any closed formula $\phi$ there are $N,L$ large enough so there is a proof-tree $d_\phi$ in PA$^{\infty,N,L}_{2,<0}$ with $\Gamma(d_\phi)\subseteq\{\phi,{\sim}\phi\}$.
\end{lemma}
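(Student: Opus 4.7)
My plan is to prove this by structural induction on the closed formula $\phi$, producing at each step a proof-tree $d_\chi$ with $\Gamma(d_\chi)\subseteq\{\chi,{\sim}\chi\}$ for each subformula $\chi$. Since $\phi$ is a fixed finite formula, I choose $N, L$ at the outset so that $N$ strictly exceeds $dp(\chi,\{X\})$ and $L$ strictly exceeds $lvl(\chi,\{X\})$ for every second-order subformula $\exists^2 X\,\chi$ (or $\forall^2 X\,\chi$) of $\phi$; this guarantees that every I$\forall^{Y^\ell}$, $\Omega^\flat$, identity locally defined function, and Read rule I invoke is available in $PA_{2,<0}^{\infty,N,L}$.

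The propositional and first-order cases are entirely routine. For literals, use Ax. For $\phi=\psi_0\wedge\psi_1$, prefix $d_{\psi_0}$ with I$\vee^L_{{\sim}\psi_0\vee{\sim}\psi_1}$ and $d_{\psi_1}$ with I$\vee^R_{{\sim}\psi_0\vee{\sim}\psi_1}$, then combine the resulting proofs of $\{\psi_i,{\sim}\phi\}$ by I$\wedge_\phi$; the case $\psi_0\vee\psi_1$ is symmetric. For $\phi=\forall x\,\psi$, apply the $\omega$ rule whose $n$-th premise is obtained by applying I$\exists^n_{\exists x\,{\sim}\psi}$ to $d_{\psi(n)}$; the case $\exists x\,\psi$ is dual.

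The essential case is a second-order quantifier, where the $\Omega^\flat$ rule and the identity function just constructed finally come into play. Take $\phi=\forall^2 X\,\psi$ with $\ell=lvl(\psi,\{X\})$ (the case $\exists^2 X\,\psi$ is dual). Apply I$\forall^{Y^\ell}_{\forall^2 X\,\psi}$ with eigenvariable $Y^\ell$, reducing the task to deriving $\{\psi(Y^\ell),\exists^2 X\,{\sim}\psi\}$. Apply $\Omega^\flat_{Y^\ell,\exists^2 X\,{\sim}\psi}$ to introduce $\exists^2 X\,{\sim}\psi$; its single premise must carry the tag $(\{\psi(Y^\ell)\},\langle\rangle,\{\psi(Y^\ell)\})$ (noting ${\sim}{\sim}\psi=\psi$). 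The identity locally defined function $\mathrm{Id}^{\psi(Y^\ell)}$ from the preceding lemma supplies this premise exactly: its conclusion is $\{\psi(Y^\ell);(\{\psi(Y^\ell)\},\langle\rangle,\{\psi(Y^\ell)\})\}$, and it lives in $PA_{2,<0}^{\infty,N,L}$ by our choice of $N,L$.

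The main obstacle is verifying the technical side-conditions. Validity: the eigenvariable $Y^\ell$ is introduced by I$\forall^{Y^\ell}$ and occurs only in $\psi(Y^\ell)$ and the tag carried by $\mathrm{Id}^{\psi(Y^\ell)}$, both of which are captured at or above the I$\forall^{Y^\ell}$ rule, so $Y^\ell$ does not escape into the outer conclusion $\{\phi,{\sim}\phi\}$, which is closed since $\phi$ is. Freedom from consecutive Read rules is inherited from $\mathrm{Id}^{\psi(Y^\ell)}$, whose construction interleaves each Read with the rule read off from the input. The complexity bookkeeping---ensuring that each $\Omega^\flat$, each copy of $\mathrm{Id}^{\psi(Y^\ell)}$, and each Read rule embedded within it falls in the appropriate $PA_{2,<0}^{\infty,N,L,m,\ell,-}$-layer---is the most delicate piece, but it is controlled uniformly by the single initial choice of $N,L$ above.
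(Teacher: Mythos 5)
Your proposal is correct and follows essentially the same route as the paper: induction on the formula, with the standard Ax/$\wedge$/$\vee$/$\omega$/I$\exists$ steps for the propositional and first-order cases, and for a second-order quantifier the combination I$\forall^{Y^\ell}$ over an $\Omega^\flat$ rule whose premise tag is discharged by the identity locally defined function $\mathrm{Id}$ from the preceding lemma. Your added remarks on choosing $N,L$ up front and on validity/consecutive-Read bookkeeping are consistent with (indeed slightly more explicit than) the paper's own sketch.
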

\begin{proof}
  By induction on $\phi$. When $\phi$ is a literal, we take $d_\phi(\langle\rangle)=Ax_{\{\phi,{\sim}\phi\}}$.

  If $\phi$ is constructed by $\wedge,\vee,\forall,\exists$, this is standard.

  Suppose $\phi$ is $\forall^2 X\,\psi$. Then, choosing a fresh variable $Y^{\ell}$, we may take $d_{\phi}$ to be

  \AxiomC{Id$^{\psi}$}
  \noLine
  \UnaryInfC{$\vdots$}
  \noLine
  \UnaryInfC{$\psi(Y^{\ell}); (\psi(Y^\ell),\langle\rangle,\psi(Y^\ell)))$}
  \RightLabel{$\Omega^\flat_{Y^{\ell},\exists^2X\,{\sim}\psi}$}
  \UnaryInfC{$\psi(Y^{\ell}),\exists^2X\,{\sim}\psi$}
  \RightLabel{I$\forall^{Y^\ell}_{\forall^2X\,\psi}$}
  \UnaryInfC{$\forall^2X\,\psi,\exists^2X\,{\sim}\psi$}
  \DisplayProof

  where the values of $m,\ell$ are determined by $\psi$.

  The $\exists^2 X\,\psi$ case is identical, using ${\sim}\psi$ in place of $\psi$.
\end{proof}

\subsection{Embedding}

We show that deductions in $PA_2$ can be embedded in $PA_{2,<r}^{\infty,N,L}$ for suitable $N,L,r$. Most of the work here is embedding the I$\exists^2$ rule, which has to be replaced by a suitable $\Omega^\flat$ rule. (The I$\forall$ rule is, of course, replaced by an $\omega$ rule, but this step is by now standard.)

Here, as well as in the cut-elimination functions we define later, there is a technical matter of handling possible conflicts between names of second order variables. We ignore this issue here: we assume all second-order variables have already been suitably renamed.

\begin{theorem}
  For any formulas $\phi(X^\ell)$ and $\psi$, taking $(m,\ell)=comp(\exists^2X\,\phi(X))$, for all sufficiently large $N,L$ there is a locally defined function $F^{\phi,X^\ell\mapsto\psi}$ in $PA_2^{\infty,N,L,m,\ell}$ with $\Gamma(F)\subseteq \phi(\psi);(\phi(X^{\ell}),\langle\rangle,\phi(X^\ell))$.
\end{theorem}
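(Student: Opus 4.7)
I would construct $F = F^{\phi, X^\ell \mapsto \psi}$ by induction on the structure of $\phi$, closely modeled on $\mathrm{Id}^{\phi(X^\ell)}$ from the preceding lemma but with the substitution $X^\ell \mapsto \psi$ applied locally to each rule as $F$ reads the input. When $X^\ell$ is not free in $\phi$, we have $\phi(\psi) = \phi$ and can simply take $F = \mathrm{Id}^\phi$. Otherwise, $F$ begins with a Read rule carrying the root tag $(\phi(X^\ell), \langle\rangle, \phi(X^\ell))$ and branches over each possible input rule $\mathcal{R}$. For any $\mathcal{R}$ whose principal formula is not a sub-formula of $\phi$, $F$ does exactly what $\mathrm{Id}$ does: copy $\mathcal{R}$ and nest further Read rules on each premise.

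The substantive modifications occur when $\mathcal{R}$'s principal formula is a sub-formula of $\phi$. For a structural step---say $\phi = \chi_1 \wedge \chi_2$ with $\mathcal{R} = \mathrm{I}\wedge_{\chi_1 \wedge \chi_2}$---we emit the substituted rule $\mathrm{I}\wedge_{\chi_1(\psi) \wedge \chi_2(\psi)}$ and splice the recursively-constructed $F^{\chi_i, X^\ell \mapsto \psi}$ onto the $i$-th premise branch, shifting its tag's position index to match the point we have reached in the input. The $\vee$, $\forall$, $\exists$ cases are analogous. At the atomic base $\phi = X^\ell t$, whenever $\mathcal{R}$ is the axiom $\mathrm{Ax}_{X^\ell t, \neg X^\ell t}$, we splice the proof $d_{\psi(t)}$ (of $\psi(t), \neg\psi(t)$) in place of copying the axiom.

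The main obstacle is the second-order quantifier case, especially $\phi = \exists^2 Z\,\chi(X^\ell, Z)$ when the input rule is $\Omega^\flat_{Y^k, \exists^2 Z\,\chi(X^\ell, Z)}$. Its unique premise bears a tag $(\{{\sim}\chi(X^\ell, Y^k)\}, \langle\rangle, \{{\sim}\chi(X^\ell, Y^k)\})$ referring to $X^\ell$, whereas the rule $\Omega^\flat_{Y^k, \exists^2 Z\,\chi(\psi, Z)}$ that we want to emit demands a premise tagged by ${\sim}\chi(\psi, Y^k)$. Bridging these requires composing the input's premise-function with the recursively-defined $F^{{\sim}\chi, X^\ell \mapsto \psi}$ for the dual formula, and is likely to need Lemma~\ref{operator_extension} to lift the recursive substitutions across the relevant ambient theories. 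The complexity bookkeeping is the delicate step here: each sub-formula $\chi$ of $\phi$ has complexity strictly below $(m, \ell) = \mathrm{comp}(\exists^2 X\,\phi(X))$, so the Cut$\Omega^\flat$ and Read rules spawned by the recursion remain available inside $PA_2^{\infty, N, L, m, \ell}$, but this must be verified at each inductive step, alongside renaming bound variables of $\psi$ to avoid capture by eigenvariables in the input.
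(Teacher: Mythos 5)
Your overall intuition---read the input rule by rule, copy rules that don't touch the designated formula, emit the substituted rule otherwise, splice $d_{\psi(t)}$ at axioms on $X^\ell t$, and repair the $\Omega^\flat$/Read interplay---matches the paper's construction. But the organization by induction on the structure of $\phi$, with recursive calls $F^{\chi_i,X^\ell\mapsto\psi}$ spliced onto premise branches, has a genuine gap. Sequents are sets, so contraction is implicit: the input proof may re-introduce $\phi(X^\ell)$, or any subformula-instance already decomposed lower down, arbitrarily high in a branch and interleaved with other rules. Once you hand control to $F^{\chi_i,X^\ell\mapsto\psi}$, whose tracking refers only to $\chi_i$, a later rule introducing $\chi_1(X^\ell)\wedge\chi_2(X^\ell)$ (or $\phi(X^\ell)$ itself) is copied verbatim, nothing below it in $F$ cancels it, and the guarantee $\Gamma(F)\subseteq \phi(\psi);(\phi(X^{\ell}),\langle\rangle,\phi(X^\ell))$ fails. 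The paper avoids structural recursion entirely: it is a single induction on positions $\sigma$, in which the fourth component $\Theta$ of the Read tag is \emph{cumulative}---whenever a rule $\mathcal{R}$ with $\Delta(\mathcal{R})\cap\Theta\neq\emptyset$ is passed, the new Reads carry $\Theta\cup\Delta_\iota(\mathcal{R})$---so every formula ever encountered as an ancestor of $\phi(X^\ell)$ stays watched forever. Note also that the correct trigger is $\Delta(\mathcal{R})\cap\Theta\neq\emptyset$, not ``the principal formula is a subformula of $\phi$'': a formula that happens to coincide with a subformula of $\phi$ but occurs only as a side formula of $\Gamma(d)$ must be left unsubstituted, otherwise you alter $\Gamma(d)\setminus\{\phi(X^\ell)\}$ and again break the stated conclusion.

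On the $\Omega^\flat$ case, your instinct (compose with a substitution function for the dual formula) is the right semantic picture, but it is not what the paper does and, as stated, it leaves the hard part open. Since everything must be realized as one locally defined function, the paper instead records, for each tag in $\Theta$ coming from a substituted $\Omega^\flat$, an associated replacement tag, and when the input later performs a Read over the old theory $\mathfrak{T}=PA^{\infty,N,L,m',\ell',-}_{2,<0}$ it emits a Read over the substituted theory $\mathfrak{T}'$ (the level may change, the depth does not). The branching sets then disagree: for $\mathcal{R}'\in\mathfrak{T}'$ with $\Delta(\mathcal{R}')\cap\Lambda[X^\ell\mapsto\psi]\neq\emptyset$ one follows the input's branch for the unique $\mathcal{R}^\leftarrow\in\mathfrak{T}$ with $\Delta(\mathcal{R}^\leftarrow)[X^\ell\mapsto\psi]=\Delta(\mathcal{R}')$, while rules of $\mathfrak{T}'$ with no preimage are handled by steering the reading position into the input's Rep branch and repeating the case analysis (this recursion terminates because the depth drops). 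Your appeal to Lemma \ref{operator_extension} does not supply this step: that lemma only lifts a function to a larger theory by acting as the identity on new rules; it does not translate Read-branchings between two different theories related by a substitution.
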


The construction is exactly what one would expect: we proceed through the input proof, replacing $X^\ell$ with $\psi$ everywhere it appears. When we encounter some $\Omega^\flat$ rule introducing a formula $\exists^2Y\,\phi'(X^\ell,Y)$, we replace it with an $\Omega^\flat$ rule introducing $\exists^2Y\,\phi'(\psi,Y)$; this may mean changing the theory we branch over---replacing $X^\ell$ with $\psi$ may change the level (but, importantly, not the depth) of the formula.

We have to keep track of this change in theories, since we will then encounter Read rules whose root tag comes from that $\Omega^\flat$ rule, and we need to replace these with Read rules with the modified root tag.
\begin{proof}
  We define $F^{\phi,X^\ell\mapsto\psi}$ by induction on $\sigma$. We set $F^{\phi,X^\ell\mapsto\psi}(\langle\rangle)=Read_{PA_{2,<0}^{\infty,N,L,m,\ell,-},\phi(X^\ell),\langle\rangle,\phi(X^\ell)}$ and $\epsilon_{\langle\rangle}=\langle\rangle$.

Consider some $\sigma$ where $F^{\phi,X^\ell\mapsto\psi}(\sigma)$ is $Read_{PA_{2,<0}^{\infty,N,L,m,\ell,-},\phi(X^\ell),\epsilon,\Theta}$. We will maintain inductively that whenever $(\mathfrak{T},\Lambda_0,\zeta,\Lambda)$ is a tag in $\Theta$, we have an associated tag $(\mathfrak{T}',\Lambda'_0,\zeta',\Lambda')$ so that there is a $\tau\iota\sqsubseteq\sigma$ with $(\mathfrak{T}',\Lambda'_0,\zeta',\Lambda')\in \Delta_\iota(F^{\phi,X^\ell\mapsto\psi}(\tau))$, and we will have $\Lambda'\subseteq\Lambda[X^\ell\mapsto\psi]\cup\Theta$.

 For each $\mathcal{R}$ in $PA_{2,<0}^{\infty,N,L,m,\ell,-}$, we will define $F^{\phi,X^\ell\mapsto\psi}(\sigma\mathcal{R})$.

  If $\Delta(\mathcal{R})\cap\Theta=\emptyset$, we may take $F^{\phi,X^\ell\mapsto\psi}(\sigma\mathcal{R})=\mathcal{R}$ and $F^{\phi,X^\ell\mapsto\psi}(\sigma\mathcal{R}\iota)=Read_{PA_{2,<0}^{\infty,N,L,m,\ell,-},\phi(X^\ell),\epsilon\iota,\Theta}$. (Note that, if $\mathcal{R}$ is Cut$\Omega^\flat$ then $X^\ell$ cannot appear free in the cut formula.)

  If $\Delta(\mathcal{R})\cap\Theta\neq\emptyset$ and $\mathcal{R}$ is anything other than $Ax$ or a Read, then $\Delta(\mathcal{R})=\{\theta\}$ for some formula $\theta$; then there is a corresponding rule $\mathcal{R}'$ which introduces $\theta[X\mapsto\psi]$, so we take $F^{\phi,X^\ell\mapsto\psi}(\sigma\mathcal{R})=\mathcal{R}'$ and $F^{\phi,X^\ell\mapsto\psi}(\sigma\mathcal{R}\iota)=Read_{PA_{2,<0}^{\infty,N,L,m,\ell},\phi(X^\ell),\epsilon\iota,\Theta\cup\Delta_\iota(\mathcal{R})}$.  In the case where $\mathcal{R}$ is $\Omega^\flat$, note that we do satisfy the requirement on the presence of a replacement tag; the theory $\mathfrak{T}$ in this case has the form $PA_{2,<0}^{\infty,N,L,m',\ell',-}$, and we are replacing it with some $PA_{2,<0}^{\infty,N,L,m',\ell''}$---that is, the level might change (and indeed, might increase), but substituting $\psi$ for $X^\ell$ does not change the depth.
  
  If $\mathcal{R}$ is Ax$_{\{X^\ell n,\neg X^\ell n\}}$ then we may replace it with $d_{\psi(n)}$ (that is, we set $F^{\phi,X^\ell\mapsto\psi}(\sigma\mathcal{R}\tau)=d_{\psi(n)}(\tau)$ for all $\tau$).

  The remaining case is that $\mathcal{R}$ is a Read$_{\mathfrak{T},\Lambda_0,\zeta,\Lambda'}$ rule whose conclusion overlaps with $\Theta$. Here $\mathfrak{T}$ must be $PA^{\infty,N,L,m',\ell',-}_{2,<0}$ for some $m'<m$ and $\ell'\leq \ell$. If $\Delta(\mathcal{R})\cap\Theta\subseteq\Delta(\zeta)$, we set $F^{\phi,X^\ell\mapsto\psi}(\sigma\mathcal{R})=Read_{\mathfrak{T}, \Lambda_0,\zeta,\Lambda'\cup\Theta}$ and, as usual, set $F^{\phi,X^\ell\mapsto\psi}(\sigma\mathcal{R}\iota)=Read_{PA_{2,<0}^{\infty,N,L,m',\ell'},\phi(X^\ell),\epsilon\iota,\Theta\cup\Delta_\iota(\mathcal{R})}$.

  The really new case is when $(\mathfrak{T},\Lambda_0,\zeta,\Lambda)\in\Theta$. In this case we replaced this tag with a new tag, $(\mathfrak{T}',\Lambda'_0,\zeta',\Lambda')$, and so we set $F^{\phi,X^\ell\mapsto\psi}(\sigma\mathcal{R})=Read_{\mathfrak{T}',\Lambda'_0,\zeta',\Lambda'}$.

  Since $\mathfrak{T}'$ need not be $\mathfrak{T}$, the premises do not line up, so we have to consider how to handle $\sigma\mathcal{R}\mathcal{R}'$. Consider some $\mathcal{R}'\in\mathfrak{T}'$. If $\Delta(\mathcal{R}')\cap \Lambda[X^\ell\mapsto\psi]=\emptyset$ then it is simpler to ignore what the input would have done in the $\mathcal{R}'$ branch; we instead apply the same argument to $\mathcal{R}'$ that we applied to $\mathcal{R}$, making all extensions to the $\epsilon \mathrm{Rep}$ branch of the input. (Note that this recurses---$\mathcal{R}'$ could itself be another Read rule---but only finitely many times, since $m'$ drops at each step.)

  So suppose $\Delta(\mathcal{R}')\cap \Lambda[X^\ell\mapsto\psi]\neq\emptyset$. In this case there is a rule $\mathcal{R}^\leftarrow\in\mathfrak{T}$ so that $\Delta(\mathcal{R}^\leftarrow)[X^\ell\mapsto\psi]=\Delta(\mathcal{R}')$. Then we take $F^{\phi,X^\ell\mapsto\psi}(\sigma\mathcal{R}\mathcal{R}')=Read_{PA_{2,<0}^{\infty,N,L,m,\ell,-},\phi(X^\ell),\epsilon\mathcal{R}^\leftarrow,\Theta\cup\Delta_\iota(\mathcal{R})}$.
\end{proof}

\begin{theorem}\label{thm:embedding}
  Let $d$ be a valid deduction in $PA_2$ so that every formula in $\Gamma(d)$ has first-order free variables contained in $x_1,\ldots,x_a$ and second-order free variables contained in $X_1,\ldots,X_b$.

  Then for sufficiently large $n,L$ and $r$, for every $n_1,\ldots,n_a,\ell_1,\ldots,\ell_b$, there is a valid proof-tree $d^\infty_{n_1,\ldots,n_a}$ in $PA^{\infty,n,L}_{2,<r}$ with $\Gamma(d^\infty_{n_1,\ldots,n_a,\ell_1,\ldots,\ell_b})\subseteq \Gamma(d)[x_i\mapsto n_i,X_j\mapsto X_j^{\ell_j}]$.
\end{theorem}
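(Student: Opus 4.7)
The plan is to argue by induction on the deduction $d$ in $PA_2$, with bounds $n$, $L$, and $r$ chosen in advance to accommodate everything that will arise in the translation. Concretely, $n$ must bound $dp(\theta, \emptyset)$ for every subformula $\theta$ of a formula appearing in $d$; $L$ must exceed $\max_j \ell_j$ plus the maximum level of any such subformula; and $r$ must exceed the rank of every cut formula in $d$ together with the ranks of the new cut formulas that will be introduced when translating I$\exists^2$ rules. For each subdeduction we produce a valid proof-tree in $PA^{\infty,n,L}_{2,<r}$ whose conclusion is the sequent of $d$ with each $x_i$ replaced by the numeral $n_i$ and each $X_j$ replaced by $X_j^{\ell_j}$.

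For the propositional rules (I$\wedge$, I$\vee$), atomic axioms (True$_\eta$, Ax), first-order I$\exists$, second-order I$\forall^2$ (introducing $\forall^2X\,\phi$ via a fresh eigenvariable $Y^{\ell'}$ of level $lvl(\forall^2X\,\phi)$), and Cut, we simply copy the rule in the infinitary system and recurse on the subdeductions, extending the substitutions appropriately. The I$\forall$ rule with eigenvariable $y$ becomes an $\omega$ rule: we extend the list of first-order variables by $y$ and invoke the IH with $y = m$ for each $m \in \mathbb{N}$, taking the resulting family of proof-trees as the premises of a single $\omega_{\forall x\,\phi}$ inference. The Ind axiom, once its variables have been instantiated so that $t$ is a specific numeral $m$, becomes a finite proof-tree built by external induction on $m$: the base case uses Ax$_{\phi(0)}$, and the successor step combines the previous stage's $\phi(m)$ with Ax$_{\phi(Sm)}$ via I$\wedge$ and I$\exists$ to produce the required $\exists x\,(\phi(x)\wedge{\sim}\phi(Sx))$ disjunct, carrying along ${\sim}\phi(0)$ throughout.

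The main obstacle is the I$\exists^2$ case. The finitary rule derives $\exists^2X\,\phi, \Sigma$ directly from $\phi[X\mapsto\psi], \Sigma$, whereas the infinitary system must introduce $\exists^2X\,\phi$ via $\Omega^\flat_{Y^\ell, \exists^2X\,\phi}$ with $\ell = lvl(\phi, \{X\})$. The single premise of this rule is a locally defined function carrying the root tag $(\{{\sim}\phi(Y^\ell)\}, \langle\rangle, \{{\sim}\phi(Y^\ell)\})$, so we must produce a proof-tree whose conclusion is $\Sigma$ together with that tag. We build it by placing a Cut on $\phi(\psi)$ between the translated subdeduction $d^\infty$ of $\phi(\psi), \Sigma$ and the substitution operator $F^{{\sim}\phi, Y^\ell \mapsto \psi}$ from the previous theorem, whose conclusion is $\{{\sim}\phi(\psi)\}$ together with the required root tag. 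Since that operator lives in the theory indexed by $(m,\ell) = comp(\exists^2X\,\phi)$, we first lift it into our ambient theory using Lemma \ref{operator_extension}; the rule-by-rule conservativity hypothesis holds because $Y^\ell$ is chosen fresh, so no rule added in the ambient theory introduces the tagged formula ${\sim}\phi(Y^\ell)$. This newly introduced Cut is what forces $r$ to exceed the rank of $\phi(\psi)$. Validity of the resulting proof-tree follows because substitution of numerals for first-order variables and of level-annotated variables for second-order variables preserves eigenvariable conditions, provided, as we assume throughout, that variables have been suitably renamed so each fresh $Y^{\ell'}$ or $Y^\ell$ is disjoint from the side formulas.
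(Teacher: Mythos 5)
Your overall strategy is the same as the paper's: induct on $d$, copy the propositional, first-order existential, second-order universal, and Cut rules, replace I$\forall$ by the $\omega$ rule, handle Ind by an external induction on the numeral, and, in the crucial I$\exists^{\psi}_{\exists^2X\,\phi}$ case, combine the substitution operator $F^{{\sim}\phi,\,\cdot\,\mapsto\psi}$ with an $\Omega^\flat$ inference and a Cut on $\phi(\psi)$. Your placement of the Cut above the $\Omega^\flat$ (so that the premise of $\Omega^\flat$ is $\Sigma$ together with the root tag) rather than, as in the paper, applying $\Omega^\flat$ to $F^{{\sim}\phi,\,\cdot\,\mapsto\psi}$ first to obtain $\exists^2X\,\phi,{\sim}\phi(\psi)$ and then cutting against the translated subdeduction, is an inessential variant: since the Cut does not discharge the tag, it simply passes down to the conclusion of the Cut and is then discharged by the $\Omega^\flat$, so both orderings yield $\exists^2X\,\phi,\Sigma$.

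The one genuine flaw is in the Ind case. You close the base case with Ax$_{\phi(0)}$ and the successor step with Ax$_{\phi(Sm)}$, but in this calculus Ax$_{\{\eta,{\sim}\eta\}}$ is available only when $\eta$ is a \emph{literal}, whereas the induction formula $\phi$ may be an arbitrary formula of $\mathcal{L}_2$, including one with second-order quantifiers. What is needed here are the identity proof-trees $d_{\phi'(0)}$ and $d_{\phi'(Sn)}$ with $\Gamma(d_\chi)\subseteq\{\chi,{\sim}\chi\}$ supplied by the earlier lemma; for $\chi$ containing second-order quantifiers these are not axioms but are built from the locally defined function $\mathrm{Id}^{\,\cdot}$ together with an $\Omega^\flat$ inference followed by I$\forall^{Y^\ell}$, and this is one of the places where $N$ and $L$ must be taken large enough. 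With $d_{\phi'(0)}$ and $d_{\phi'(Sn)}$ substituted for your Ax steps, your construction of the deductions of ${\sim}\phi'(0),\exists x\,(\phi'(x)\wedge{\sim}\phi'(Sx)),\phi'(n)$ by induction on $n$ is exactly the paper's, and the rest of your argument (choice of $n$, $L$, $r$ in advance, renaming of variables to preserve validity) goes through as in the paper.
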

\begin{proof}
  By induction on $d$. We consider cases on $d(\langle\rangle)$.

  %If $d(\langle\rangle)$ is Rep, we may simply apply the inductive hypothesis to $d(\langle\top\rangle)$.
  If $d(\langle\rangle)$ is True, it is already a deduction of the desired form, and if $d(\langle\rangle)$ is Ax$_{\sigma}$ then we may take $d^{\infty}_{n_1,\ldots,n_a,\ell_1,\ldots,\ell_b}(\langle\rangle)$ to be $Ax_{\sigma[x_i\mapsto n_i,X_j\mapsto X_j^\ell]}$.

  If $d(\langle\rangle)$ is I$\wedge$, I$\vee$, I$\exists$, or $Cut$ the claim follows immediately from the inductive hypothesis.

  If $d(\langle\rangle)$ is I$\forall^y_{\forall x\,\phi}$ then, for each $n_{a+1}$, we may apply the inductive hypothesis to obtain a deduction $d^\infty_{n_1,\ldots,n_a,n_{a+1},\ell_1,\ldots,\ell_b}$ with $\Gamma(d^\infty_{n_1,\ldots,n_a,n_{a+1},\ell_1,\ldots,\ell_b})\subseteq \Gamma(d)[x_i\mapsto n_i,X_j\mapsto X_j^{\ell_j}],\phi[x_i\mapsto n_i,x\mapsto n_{a+1},X_j\mapsto X_j^{\ell_j}]$. We then obtain $d^\infty_{n_1,\ldots,n_a,\ell_1,\ldots,\ell_b}$ by applying an $\omega$ rule to these deductions.

  If $d(\langle\rangle)$ is Ind$^t_{\forall x\,\phi}$, let $\phi'=\phi[x_i\mapsto n_i,X_j\mapsto X_j^{\ell_j}]$. We will construct deductions $d^\infty_n$ with $\Gamma(d^\infty_n)\subseteq {\sim}\phi'(0),\exists x\,(\phi'(x)\wedge{\sim}\phi'(Sx)),\phi'(n)$ by induction on $n$, and then obtain $d^\infty_{n_1,\ldots,n_a,\ell_1,\ldots,\ell_b}$ by applying an $\omega$ rule to these deductions.  When $n=0$, $d^\infty_0$ is $d_{\phi'(0)}$.

  Suppose we have constructed $d^\infty_n$. Then we may take $d^\infty_{n+1}$ to be

  \AxiomC{$d^\infty_n$}
  \noLine
  \UnaryInfC{$\vdots$}
  \noLine
  \UnaryInfC{$ {\sim}\phi'(0), \exists x\,\phi'(x)\wedge{\sim}\phi'(Sx), \phi'(n)$}
  \AxiomC{$d_{\phi'(Sn)}$}
  \noLine
  \UnaryInfC{$\vdots$}
  \noLine
  \UnaryInfC{${\sim}\phi'(Sn),\phi'(Sn)$}
  \BinaryInfC{$ {\sim}\phi'(0), \exists x\,\phi'(x)\wedge{\sim}\phi'(Sx), \phi'(n)\wedge{\sim}\phi'(Sn), \phi'(Sn)$}
  \UnaryInfC{$ {\sim}\phi'(0), \exists x\,\phi'(x)\wedge{\sim}\phi'(Sx),\phi'(Sn)$}
  \DisplayProof

  If $d(\langle\rangle)$ is I$\forall^Y$ then we may choose a suitable $\ell$ and apply the inductive hypothesis.

  Finally, if $d(\langle\rangle)$ is I$\exists^\psi_{\exists^2X\,\phi}$, letting $\phi'=\phi[x_i\mapsto n_i,X_j\mapsto X_j^{\ell_j}]$, $\psi'=\psi[x_i\mapsto n_i,X_j\mapsto X_j^{\ell_j}]$, and $d'$ be the result of the inductive hypothesis, we have

  \AxiomC{$d'$}
  \noLine
  \UnaryInfC{$\vdots$}
  \noLine
  \UnaryInfC{$\phi[\psi]$}
  \AxiomC{$F^{{\sim}\phi,X^\ell\mapsto\psi}$}
  \RightLabel{$\Omega^\flat$}
  \UnaryInfC{$\exists^2X\,\phi, {\sim}\phi[\psi]$}
  \RightLabel{Cut}
  \BinaryInfC{$\exists^2X\,\phi$}
  \DisplayProof
\end{proof}

\subsection{Cut-Elimination}

We describe the cut-elimination operators. These definitions below are completely standard, expressed here in the language of locally defined functions.

We need one final theory.
\begin{definition}
  $PA^{\infty,N,L,-}_{2,<r}$ is the result of removing all Read rules from $PA^{\infty,N,L}_{2,<r}$.
\end{definition}

\begin{lemma}[$\bot$ Inversion]
  If $\eta$ is a false literal then there is a locally defined function $\mathrm{Inverse}^\bot_{\eta}$ on $PA^{\infty,N,L,-}_{2,<r}$ in $PA^{\infty,N,L}_{2,<r}$ with $\Gamma(\mathrm{Inverse}^\bot_{\eta})\subseteq (\eta,\langle\rangle,\eta)$.
\end{lemma}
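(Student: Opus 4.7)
The plan is to construct $\mathrm{Inverse}^\bot_\eta$ as a proof-tree that alternates $\mathrm{Read}$ rules, which inspect the input proof step by step, with non-$\mathrm{Read}$ rules that copy the input's structure into the output. The crucial observation is that $\eta$ is a literal, so the only rule in $\mathfrak{T} = PA^{\infty,N,L,-}_{2,<r}$ whose conclusion can contain $\eta$ is $\mathrm{Ax}_{\{\eta, {\sim}\eta\}}$: the rule $\mathrm{True}_\eta$ is not in the theory because $\eta$ is false, and every other rule in $\mathfrak{T}$ either has an empty conclusion ($\mathrm{Rep}$, $\mathrm{Cut}$, $\mathrm{Cut}\Omega^\flat$), or a composite non-literal conclusion (I$\wedge$, I$\vee$, $\omega$, I$\exists$, I$\forall$, $\Omega^\flat$), or is an axiom on a different pair of literals. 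Only the $\mathrm{Ax}_{\{\eta, {\sim}\eta\}}$ case needs special treatment.

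First I set $\mathrm{Inverse}^\bot_\eta(\langle\rangle) = \mathrm{Read}_{\mathfrak{T}, \eta, \langle\rangle, \eta}$, with root tag $(\eta, \langle\rangle, \eta)$, and then define the construction recursively. At any interior $\mathrm{Read}_{\mathfrak{T}, \eta, \epsilon, \eta}$, for each $\mathcal{R}$-branch: if $\mathcal{R}$ is $\mathrm{Ax}_{\{\eta, {\sim}\eta\}}$, I place $\mathrm{True}_{{\sim}\eta}$ above, which is legal because ${\sim}\eta$ is true; otherwise I apply $\mathcal{R}$ unchanged in the output theory $PA^{\infty,N,L}_{2,<r}$, and in each premise $\iota$ of $\mathcal{R}$ I place a fresh $\mathrm{Read}_{\mathfrak{T}, \eta, \epsilon\iota, \eta}$. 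This alternating pattern $\mathrm{Read}$--$\mathcal{R}$--$\mathrm{Read}$ automatically prevents consecutive $\mathrm{Read}$s.

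To verify $\Gamma(\mathrm{Inverse}^\bot_\eta) \subseteq (\eta, \langle\rangle, \eta)$, I trace propagation downwards through the construction: the root $\mathrm{Read}$ has conclusion $\emptyset; \{(\eta, \langle\rangle, \eta)\}$, and each formula introduced above by some simulated $\mathcal{R}$ lies inside $\Delta(\mathcal{R}) \setminus \{\eta\}$, so the removal of $\eta$ at each $\mathrm{Read}$ premise ensures no stray $\eta$ ever reaches the root. The $\mathrm{Ax}_{\{\eta, {\sim}\eta\}}$ case is the only place where this removal has real content: the $\mathrm{Ax}$-branch premise of the corresponding $\mathrm{Read}$ is $\{\eta, {\sim}\eta\} \setminus \{\eta\} = \{{\sim}\eta\}$, which is precisely the conclusion of $\mathrm{True}_{{\sim}\eta}$. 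For every other $\mathcal{R}$-branch, $\eta \notin \Delta(\mathcal{R})$, so applying $\mathcal{R}$ followed by $\mathrm{Read}$s in each premise gives exactly the extended sequent required above the $\mathrm{Read}$.

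The main technical subtlety, which I expect to require the most care, concerns the $\Omega^\flat$ and $\mathrm{Cut}\Omega^\flat$ cases, where the input rule's premise already carries a tag. Copying such a rule into the output places its premise tag (e.g., $(\{{\sim}\phi(Y^\ell)\}, \langle\rangle, \{{\sim}\phi(Y^\ell)\})$ for $\Omega^\flat$) alongside the new $\mathrm{Read}$ tag $(\eta, \epsilon\bot, \eta)$ in the same extended sequent above the copied rule. I must check that the two tags coexist without interference---their roots are different and the $\mathrm{Read}$ only removes entries involving $\eta$---and that the eigenvariable side conditions on $Y^\ell$ inherited from the input carry over to the output copy. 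This is largely routine bookkeeping, but it is the place where the extended-sequent machinery must be pinned down carefully.
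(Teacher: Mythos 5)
Your construction is exactly the paper's: a root $\mathrm{Read}_{PA^{\infty,N,L,-}_{2,<r},\eta,\langle\rangle,\eta}$, copying every rule other than $\mathrm{Ax}_{\{\eta,{\sim}\eta\}}$ and placing fresh $\mathrm{Read}$s at position $\epsilon\iota$ above each premise, and replacing $\mathrm{Ax}_{\{\eta,{\sim}\eta\}}$ by $\mathrm{True}_{{\sim}\eta}$. The proposal is correct and takes essentially the same approach as the paper, with some additional (harmless) verification detail the paper leaves implicit.
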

\begin{proof}
  We define $\mathrm{Inverse}^\bot_{\eta}(\sigma)$ by induction on $\sigma$. We set $\mathrm{Inverse}^\bot_{\eta} (\langle\rangle)=Read_{PA^{\infty,N,L,-}_{2,<r},\eta,\epsilon,\eta}$.

  Suppose $\mathrm{Inverse}^\bot_{\eta} (\sigma)=Read_{PA^{\infty,N,L,-}_{2,<r},\eta,\epsilon,\eta}$. If $\mathcal{R}$ is anything other than Ax$_{\{\eta,{\sim}\eta\}}$ then $\mathrm{Inverse}^\bot_{\eta} (\sigma\mathcal{R})=\mathcal{R}$ and $\mathrm{Inverse}^\bot_{\eta} (\sigma\mathcal{R}\iota)=Read_{PA^{\infty,N,L,-}_{2,<r},\eta,\epsilon\iota,\eta}$.

  If $\mathcal{R}$ is Ax$_{\{\eta,{\sim}\eta\}}$ then $\mathrm{Inverse}^\bot_{\eta} (\sigma\mathcal{R})=True_{{\sim}\eta}$.
\end{proof}

\begin{lemma}[$\wedge$ Inversion]
For any $\phi_L,\phi_R$ and any $B\in\{L,R\}$ there is a locally defined function $\mathrm{Inverse}^\wedge_{\phi_L,\phi_R,B}$ on $PA^{\infty,N,L,-}_{2,<r}$ in $PA^{\infty,N,L}_{2,<r}$ with $\Gamma(\mathrm{Inverse}^\wedge_{\phi_L,\phi_R,B})\subseteq \phi_B;(\phi_L\wedge\phi_R,\epsilon_\bot,\phi_L\wedge\phi_R)$.
\end{lemma}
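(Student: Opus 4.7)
The plan is to construct $F := \mathrm{Inverse}^\wedge_{\phi_L,\phi_R,B}$ by induction on $\sigma \in \dom(F)$, in close analogy with the construction of $\mathrm{Inverse}^\bot_\eta$ in the previous lemma. The idea is to root $F$ at a $\mathrm{Read}$ that examines the input tree and mimics whatever rule it finds there, except when it encounters the (unique) rule $\mathrm{I}\wedge_{\phi_L\wedge\phi_R}$ producing $\phi_L\wedge\phi_R$, at which point it extracts the $B$-premise by reading further into that branch of the input.

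Concretely, writing $\mathfrak{T} := PA^{\infty,N,L,-}_{2,<r}$, set $F(\langle\rangle) := \mathrm{Read}_{\mathfrak{T},\phi_L\wedge\phi_R,\langle\rangle,\phi_L\wedge\phi_R}$. Whenever $F(\sigma) = \mathrm{Read}_{\mathfrak{T},\phi_L\wedge\phi_R,\epsilon,\phi_L\wedge\phi_R}$, case on $\mathcal{R} \in \mathfrak{T}$. If $\mathcal{R} \neq \mathrm{I}\wedge_{\phi_L\wedge\phi_R}$, then $\phi_L\wedge\phi_R \notin \Delta(\mathcal{R})$ (since this is the unique rule in $\mathfrak{T}$ with $\phi_L\wedge\phi_R$ as its conclusion), so set $F(\sigma\mathcal{R}) := \mathcal{R}$ and, for each $\iota \in |\mathcal{R}|$, $F(\sigma\mathcal{R}\iota) := \mathrm{Read}_{\mathfrak{T},\phi_L\wedge\phi_R,\epsilon\iota,\phi_L\wedge\phi_R}$. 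If $\mathcal{R} = \mathrm{I}\wedge_{\phi_L\wedge\phi_R}$, set $F(\sigma\mathcal{R}) := \mathrm{Rep}$ and $F(\sigma\mathcal{R}\top) := \mathrm{Read}_{\mathfrak{T},\phi_L\wedge\phi_R,\epsilon B,\phi_L\wedge\phi_R}$.

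To verify $\Gamma(F) \subseteq \phi_B;(\phi_L\wedge\phi_R,\langle\rangle,\phi_L\wedge\phi_R)$, the root Read immediately contributes the tag $(\phi_L\wedge\phi_R,\langle\rangle,\phi_L\wedge\phi_R)$. In any non-$\mathrm{I}\wedge$ branch, every $\psi \in \Delta(\mathcal{R})$ satisfies $\psi \neq \phi_L\wedge\phi_R$, so $\psi \in \Delta(\mathcal{R})\setminus\{\phi_L\wedge\phi_R\}$, which the enclosing Read consumes at its $\mathcal{R}$-premise; each secondary tag $(\phi_L\wedge\phi_R,\epsilon\iota,\phi_L\wedge\phi_R)$ produced by the next layer of Reads is likewise in the enclosing Read's premise and so is consumed. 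In the $\mathrm{I}\wedge$-branch, the upper Read at $\epsilon B$ has conclusion $\{\phi_B\};(\phi_L\wedge\phi_R,\epsilon B,\phi_L\wedge\phi_R)$: the tag matches one of the two tags in the $\mathrm{I}\wedge$-premise of the enclosing Read and is absorbed, while $\phi_B$ (not consumed by $\mathrm{Rep}$, and not in the empty formula part of the enclosing Read's premise) propagates down to $\Gamma(F,\langle\rangle)$. Nothing else survives.

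The main obstacle is modest but essential: the construction must respect the ban on consecutive Reads with the same root tag. The naive Case 2 construction would place a Read directly above the enclosing Read (both with root $\phi_L\wedge\phi_R$), which is forbidden; the inserted $\mathrm{Rep}$ is precisely the buffer that sidesteps this. In Case 1 the issue does not arise because $\mathfrak{T}=PA^{\infty,N,L,-}_{2,<r}$ contains no Read rules, so the rule $\mathcal{R}$ placed between Reads is never itself a Read. Validity (no free eigenvariables) is straightforward since Read and Rep carry none and the placed $\mathcal{R}$ inherits the input's eigenvariable discipline, so with these bookkeeping points in place the construction yields the required locally defined function.
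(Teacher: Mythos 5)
Your construction is essentially the paper's: a Read with root $\phi_L\wedge\phi_R$ that copies every rule of $PA^{\infty,N,L,-}_{2,<r}$ verbatim except $\mathrm{I}\wedge_{\phi_L\wedge\phi_R}$, at which point the read is redirected to the $B$-premise of the input, and the conclusion computation goes through exactly as you describe. The only differences are your inserted Rep buffer---the paper's written proof places the second Read directly at the $\mathrm{I}\wedge$ branch, technically creating a consecutive Read with the same root, which its earlier remark says to repair exactly as you do---and the cosmetic point that the statement roots the tag at the position $\epsilon_\bot$ rather than $\langle\rangle$, which your construction accommodates by simply starting the read there.
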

\begin{proof}
  As always, we define $\mathrm{Inverse}^\wedge_{\phi_L,\phi_R,B}$ by induction on $\sigma$. We set $\mathrm{Inverse}^\wedge_{\phi_L,\phi_R,B}(\langle\rangle)=Read_{PA^{\infty,N,L,-}_{2,<r},\phi_L\wedge\phi_R,\epsilon_\bot,\phi_L\wedge\phi_R}$.

  If $\mathrm{Inverse}^\wedge_{\phi_L,\phi_R,B}(\sigma)=Read_{PA^{\infty,N,L,-}_{2,<r},\phi_L\wedge\phi_R,\epsilon,\phi_L\wedge\phi_R}$ and $\mathcal{R}$ is anything other than I$\wedge_{\phi_L\wedge\phi_R}$ then $\mathrm{Inverse}^\wedge_{\phi_L,\phi_R,B}(\sigma\mathcal{R})=\mathcal{R}$ and $\mathrm{Inverse}^\wedge_{\phi_L,\phi_R,B}(\sigma\mathcal{R}\iota)=Read_{PA^{\infty,N,L,-}_{2,<r},\phi_L\wedge\phi_R,\epsilon\iota,\phi_L\wedge\phi_R}$.

  So suppose $\mathcal{R}$ is I$\wedge_{\phi_L\wedge\phi_R}$. Then we set $\mathrm{Inverse}^\wedge_{\phi_Bb}(\sigma\mathcal{R})=Read_{PA^{\infty,N,L,-}_{2,<r},\phi_L\wedge\phi_R,\epsilon B,\phi_L\wedge\phi_R}$.
\end{proof}

\begin{lemma}[$\forall$ Inversion]
  For any $\phi(x)$ and any $n$ there is a locally defined function $\mathrm{Inverse}^\forall_{\phi,x,n}$ on $PA^{\infty,N,L,-}_{2,<r}$ in $PA^{\infty,N,L}_{2,<r} $ with $\Gamma(\mathrm{Inverse}^\forall_{\phi,x,n})\subseteq \phi(n);(\forall x\,\phi,\epsilon_\bot,\forall x\,\phi)$.
\end{lemma}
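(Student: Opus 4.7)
The plan is to mirror the proof of $\wedge$ Inversion almost verbatim, replacing the principal rule $\mathrm{I}\wedge_{\phi_L\wedge\phi_R}$ with $\omega_{\forall x\,\phi}$ and the finite branch selector $B\in\{L,R\}$ with the integer $n\in\mathbb{N}$. The idea is to walk down the input proof, reading one rule at a time, and wait until we encounter the $\omega$ rule that introduces the target $\forall x\,\phi$; once we see it, we jump to the $n$-th premise, whose conclusion contains $\phi(n)$.

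Concretely, I would define $\mathrm{Inverse}^\forall_{\phi,x,n}$ by induction on $\sigma$. Set
\[
\mathrm{Inverse}^\forall_{\phi,x,n}(\langle\rangle) = \mathrm{Read}_{PA^{\infty,N,L,-}_{2,<r},\,\forall x\,\phi,\,\langle\rangle,\,\forall x\,\phi}.
\]
Whenever $\mathrm{Inverse}^\forall_{\phi,x,n}(\sigma)$ is a Read rule with tag $(\forall x\,\phi,\epsilon,\forall x\,\phi)$, consider each $\mathcal{R}$ in $PA^{\infty,N,L,-}_{2,<r}$. If $\mathcal{R}$ is anything other than $\omega_{\forall x\,\phi}$, set $\mathrm{Inverse}^\forall_{\phi,x,n}(\sigma\mathcal{R}) = \mathcal{R}$ and continue with $\mathrm{Inverse}^\forall_{\phi,x,n}(\sigma\mathcal{R}\iota) = \mathrm{Read}_{PA^{\infty,N,L,-}_{2,<r},\,\forall x\,\phi,\,\epsilon\iota,\,\forall x\,\phi}$ for each $\iota\in|\mathcal{R}|$, exactly as in the $\wedge$ case. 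If $\mathcal{R} = \omega_{\forall x\,\phi}$, set
\[
\mathrm{Inverse}^\forall_{\phi,x,n}(\sigma\mathcal{R}) = \mathrm{Read}_{PA^{\infty,N,L,-}_{2,<r},\,\forall x\,\phi,\,\epsilon n,\,\forall x\,\phi},
\]
which looks at the $n$-th premise of the $\omega$ rule in the input.

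To verify the conclusion bound, note that the only formula in $\Theta = \{\forall x\,\phi\}$ is $\forall x\,\phi$ itself; since the premise $\phi(n)$ of $\omega_{\forall x\,\phi}$ is distinct from $\forall x\,\phi$, after we branch to the $n$-th premise the formula $\phi(n)$ is not suppressed by the tag and so propagates down to the conclusion of $\mathrm{Inverse}^\forall_{\phi,x,n}$. All other formulas in the conclusions of the Read rules belong to the root tag $(\forall x\,\phi,\langle\rangle,\forall x\,\phi)$, which establishes the claimed inclusion $\Gamma(\mathrm{Inverse}^\forall_{\phi,x,n})\subseteq \phi(n);(\forall x\,\phi,\epsilon_\bot,\forall x\,\phi)$.

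There is essentially no obstacle here: the construction is the obvious $\omega$-analog of the $\wedge$ inversion, and nothing about the infinitary branching causes trouble, since Read rules are allowed to branch over arbitrary rules (and in particular over the $\omega$ rule with its $\mathbb{N}$-many premises). The only subtlety shared with the previous inversion lemmas is the appearance of nominally consecutive Read rules in the branch where we invoke the principal rule; as discussed in the paper's convention, these are to be understood as separated by a Rep rule so that the proof-tree remains free of consecutive Reads.
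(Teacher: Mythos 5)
Your construction is exactly the paper's proof: copy every rule other than $\omega_{\forall x\,\phi}$ while advancing the Read position, and on encountering $\omega_{\forall x\,\phi}$ issue a Read at position $\epsilon n$, which is precisely how $\mathrm{Inverse}^\forall_{\phi,x,n}$ is defined there (your added verification of the $\Gamma$-containment and the Rep-separation remark are consistent with the paper's conventions). No gaps; this matches the intended argument.
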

\begin{proof}
  We set $\mathrm{Inverse}^\forall_{\phi,x,n}(\langle\rangle)=Read_{PA^{\infty,N,L,-}_{2,<r},\forall x\,\phi,\epsilon_\bot,\forall x\,\phi}$.

  If $\mathrm{Inverse}^\forall_{\phi,x,n}(\sigma)=Read_{PA^{\infty,N,L,-}_{2,<r},\forall x\,\phi,\epsilon,\forall x\,\phi}$ and $\mathcal{R}$ is anything other than $\omega_{\forall x\,\phi}$ then $\mathrm{Inverse}^\forall_{\phi,x,n}(\sigma\mathcal{R})=\mathcal{R}$ and $\mathrm{Inverse}^\forall_{\phi,x,n}(\sigma\mathcal{R}\iota)=Read_{PA^{\infty,N,L,-}_{2,<r},\forall x\,\phi,\epsilon\iota,\forall x\,\phi}$.

  So suppose $\mathcal{R}$ is $\omega_{\forall x\,\phi}$. Then we set $\mathrm{Inverse}^\forall_{\phi,x,n}(\sigma\mathcal{R})=Read_{PA^{\infty,N,L,-}_{2,<r},\forall x\,\phi,\epsilon n,\forall x\,\phi}$.
\end{proof}

\begin{lemma}[$\vee$ Elimination]
  For any $\phi_0,\phi_1$ there is a locally defined function $\mathrm{Elim}^\vee_{\phi_0,\phi_1}$ on $PA^{\infty,N,L,-}_{2,<r}$ in $PA^{\infty,N,L}_{2,<r}$ with $\Gamma(\mathrm{Elim}^\vee_{\phi_0,\phi_1})\subseteq (\phi_0\vee\phi_1,\epsilon_\bot,\phi_0\vee\phi_1), ({\sim}\phi_0\wedge{\sim}\phi_1,\epsilon_\bot, {\sim}\phi_0\wedge{\sim}\phi_1)$.
\end{lemma}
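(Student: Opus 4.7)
The plan is to extend the $\wedge$-Inversion pattern to handle two inputs simultaneously, combining them via a Cut on $\phi_B$ whenever the first input introduces $\phi_0\vee\phi_1$. I would start with $\mathrm{Elim}^\vee_{\phi_0,\phi_1}(\langle\rangle) = \mathrm{Read}_{PA^{\infty,N,L,-}_{2,<r},\phi_0\vee\phi_1,\epsilon_\bot,\phi_0\vee\phi_1}$ and proceed by induction on $\sigma$. In the inductive step, when $\mathrm{Elim}^\vee_{\phi_0,\phi_1}(\sigma)$ is a Read at some position $\epsilon$ and the branch rule $\mathcal{R}$ is anything other than $\mathrm{I}\vee^L_{\phi_0\vee\phi_1}$ or $\mathrm{I}\vee^R_{\phi_0\vee\phi_1}$, I would propagate exactly as in the $\wedge$-Inversion lemma: set $\mathrm{Elim}^\vee_{\phi_0,\phi_1}(\sigma\mathcal{R}) = \mathcal{R}$ and $\mathrm{Elim}^\vee_{\phi_0,\phi_1}(\sigma\mathcal{R}\iota) = \mathrm{Read}_{PA^{\infty,N,L,-}_{2,<r},\phi_0\vee\phi_1,\epsilon\iota,\phi_0\vee\phi_1}$ for every $\iota \in |\mathcal{R}|$.

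The key new case is $\mathcal{R} = \mathrm{I}\vee^B_{\phi_0\vee\phi_1}$ for some $B \in \{L,R\}$. Here I would set $\mathrm{Elim}^\vee_{\phi_0,\phi_1}(\sigma\mathcal{R}) = \mathrm{Cut}_{\phi_B}$: the $\top$ premise begins with $\mathrm{Read}_{PA^{\infty,N,L,-}_{2,<r},\phi_0\vee\phi_1,\epsilon\top,\phi_0\vee\phi_1}$ and then continues copying input~1 verbatim in the style of $\mathrm{Id}^{\phi_B}$, producing a proof of $\phi_B$; the $\bot$ premise is $\mathrm{Inverse}^\wedge_{{\sim}\phi_0,{\sim}\phi_1,B}$ acting on input~2, which yields ${\sim}\phi_B$. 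The two tags in the claimed conclusion survive throughout: the first tag $(\phi_0\vee\phi_1,\epsilon_\bot,\phi_0\vee\phi_1)$ is maintained (with updated position) at every Read on input~1, while the second tag $({\sim}\phi_0\wedge{\sim}\phi_1,\epsilon_\bot,{\sim}\phi_0\wedge{\sim}\phi_1)$ is inherited from the conclusion bound of $\mathrm{Inverse}^\wedge$. Every ordinary formula appearing in some $\Delta(\mathcal{R})$ is either stripped by the Read's $\Theta = \phi_0\vee\phi_1$, discharged higher up by a propagated $\mathcal{R}$, or consumed as the cut formula $\phi_B$ against its counterpart from $\mathrm{Inverse}^\wedge$.

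The main obstacle I anticipate is bookkeeping rather than anything conceptual: ensuring the resulting proof-tree is free of consecutive Reads. This comes for free here, since every Read in the construction is a child either of the tree root or of a non-Read rule (the new $\mathrm{Cut}$, or some propagated $\mathcal{R}$), and no rule of $PA^{\infty,N,L,-}_{2,<r}$ is itself a Read. The only real side-condition lurking in the background is that $\mathrm{Cut}_{\phi_B}$ requires $\phi_B$ to have rank strictly less than $r$, which is an implicit hypothesis on the choice of $r$ inherited from the surrounding cut-elimination setup.
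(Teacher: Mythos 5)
Your overall shape matches the paper's construction: propagate every rule other than an introduction of $\phi_0\vee\phi_1$, and at I$\vee^B_{\phi_0\vee\phi_1}$ place a $\mathrm{Cut}_{\phi_B}$ whose $\bot$ premise is $\mathrm{Inverse}^\wedge_{{\sim}\phi_0,{\sim}\phi_1,B}$ (your instantiation with ${\sim}\phi_0,{\sim}\phi_1$ is in fact the right one for the stated tag). But there is a genuine gap in your $\top$ premise. After the cut you propose to ``continue copying input~1 verbatim in the style of $\mathrm{Id}^{\phi_B}$.'' Since sequents are sets and contraction is implicit, the input deduction may introduce $\phi_0\vee\phi_1$ at many positions, including above the first I$\vee^B$ you encounter; a verbatim copy of a later I$\vee^{B'}_{\phi_0\vee\phi_1}$ puts $\phi_0\vee\phi_1$ into the conclusion sequent of a rule of your tree, and nothing below removes it: the Reads below have premise sequents $\Delta(\mathcal{R})\setminus\Theta$ with $\Theta=\{\phi_0\vee\phi_1\}$, and the cut only discharges $\phi_B$ and ${\sim}\phi_B$. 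So $\phi_0\vee\phi_1$ leaks into $\Gamma(\mathrm{Elim}^\vee_{\phi_0,\phi_1})$, violating the claimed bound (which allows no ordinary formulas). This also shows that your closing justification is inverted: the Read's $\Theta$ does not ``strip'' $\Theta$-formulas from the function's conclusion --- on the contrary, the Read premise $\Delta(\mathcal{R})\setminus\Theta$ absorbs exactly the formulas \emph{not} in $\Theta$, so formulas in $\Theta$ survive unless every rule introducing them is intercepted (this is precisely why $\Gamma(\mathrm{Id}^\phi)$ contains $\phi$).

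The repair is what the paper does: the $\top$ premise of the cut is not an $\mathrm{Id}$-style copier but the \emph{same} $\mathrm{Elim}^\vee$ recursion continued at position $\epsilon\top$, with root and $\Theta$ still $\phi_0\vee\phi_1$. Then every occurrence of I$\vee_{\phi_0\vee\phi_1}$ in the input, not just the first along each branch, triggers its own $\mathrm{Cut}_{\phi_B}$ against a fresh copy of $\mathrm{Inverse}^\wedge_{{\sim}\phi_0,{\sim}\phi_1,B}$ (read from $\epsilon_\bot$ each time), so no copied rule ever introduces $\phi_0\vee\phi_1$ and the only surviving material is the two tags, while each $\phi_B$ and ${\sim}\phi_B$ is consumed by its cut. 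Your remarks on consecutive Reads and on the rank side-condition for $\mathrm{Cut}_{\phi_B}$ are fine and carry over unchanged to the corrected construction.
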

\begin{proof}
  We set $\mathrm{Elim}^\vee_{\phi_0,\phi_1}(\langle\rangle)=Read_{PA^{\infty,N,L,-}_{2,<r},\phi_0\vee\phi_1,\epsilon_\bot,\phi_0\vee\phi_1}$.

  If $\mathrm{Elim}^\vee_{\phi_0,\phi_1}(\sigma)=Read_{PA^{\infty,N,L,-}_{2,<r},\phi_0\vee\phi_1,\epsilon,\phi_0\vee\phi_1}$ and $\mathcal{R}$ is anything other than I$\vee_{\phi_0\vee\phi_1}$, $\mathrm{Elim}^\vee_{\phi_0,\phi_1}(\sigma\mathcal{R})=\mathcal{R}$ and $\mathrm{Elim}^\vee_{\phi_0,\phi_1}(\sigma\mathcal{R}\iota)=Read_{PA^{\infty,N,L,-}_{2,<r},\phi_0\vee\phi_1,\epsilon\iota,\phi_0\vee\phi_1}$.

  If $\mathcal{R}$ is I$\vee^B_{\phi_0\vee\phi_1}$ then $\mathrm{Elim}^\vee_{\phi_0,\phi_1}(\sigma\mathcal{R})=Cut_{\phi_B}$, $\mathrm{Elim}^\vee_{\phi_0,\phi_1}(\sigma\mathcal{R}0)=Read_{PA^{\infty,N,L,-}_{2,<r},\phi_0\vee\phi_1,\epsilon\top,\phi_0\vee\phi_1}$, and for all $\tau$, $\mathrm{Elim}^\vee_{\phi_0,\phi_1}(\sigma\mathcal{R}1\tau)=\mathrm{Inverse}^\wedge_{\phi_0,\phi_1,B}(\tau)$.
\end{proof}

\begin{lemma}[$\exists$ Elimination]
  For any $\phi(x)$ there is a locally defined function $\mathrm{Elim}^\forall_{\phi,x}$ on $PA^{\infty,N,L,-}_{2,<r}$ in $PA^{\infty,N,L}_{2,<r}$ with $\Gamma(\mathrm{Elim}^\forall_{\phi,x})\subseteq (\exists x\,\phi,\epsilon_\bot, \exists x\,\phi), (\forall x\,{\sim}\phi,\epsilon_\bot, \forall x\,{\sim}\phi)$.
\end{lemma}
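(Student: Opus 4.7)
The plan is to mirror the $\vee$ Elimination construction nearly verbatim, replacing the roles of $I\vee$ and $I\wedge$ with $I\exists$ and $\omega$. I would define $\mathrm{Elim}^\forall_{\phi,x}$ by induction on $\sigma$, starting from
\[\mathrm{Elim}^\forall_{\phi,x}(\langle\rangle) = \mathrm{Read}_{PA^{\infty,N,L,-}_{2,<r},\exists x\,\phi,\epsilon_\bot,\exists x\,\phi},\]
and walking down the first input proof-tree (the one whose conclusion contains $\exists x\,\phi$) via Read rules.

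For the \emph{transparent case}, whenever $\mathrm{Elim}^\forall_{\phi,x}(\sigma)$ is a Read with root $\exists x\,\phi$ and the observed input rule $\mathcal{R}$ is anything other than $I\exists^t_{\exists x\,\phi}$, I would set $\mathrm{Elim}^\forall_{\phi,x}(\sigma\mathcal{R}) = \mathcal{R}$ and, for each $\iota\in|\mathcal{R}|$,
\[\mathrm{Elim}^\forall_{\phi,x}(\sigma\mathcal{R}\iota) = \mathrm{Read}_{PA^{\infty,N,L,-}_{2,<r},\exists x\,\phi,\epsilon\iota,\exists x\,\phi}.\]
Since $PA^{\infty,N,L,-}_{2,<r}$ contains no Read rules, this step cannot introduce consecutive Reads.

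The \emph{substantive case} occurs when $\mathcal{R}$ is $I\exists^t_{\exists x\,\phi}$, whose unique premise proves $\phi(t)$. Here I would place a cut: set $\mathrm{Elim}^\forall_{\phi,x}(\sigma\mathcal{R}) = \mathrm{Cut}_{\phi(t)}$, let the $\top$ branch continue reading the input by $\mathrm{Elim}^\forall_{\phi,x}(\sigma\mathcal{R}\top) = \mathrm{Read}_{PA^{\infty,N,L,-}_{2,<r},\exists x\,\phi,\epsilon\top,\exists x\,\phi}$, and let the $\bot$ branch be a copy of $\mathrm{Inverse}^\forall_{{\sim}\phi,x,t}$, so that for all $\tau$, $\mathrm{Elim}^\forall_{\phi,x}(\sigma\mathcal{R}\bot\tau) = \mathrm{Inverse}^\forall_{{\sim}\phi,x,t}(\tau)$. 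The $\top$ side will eventually yield $\phi(t)$, carried by the $\exists x\,\phi$ tag, while by $\forall$ Inversion the $\bot$ side yields ${\sim}\phi(t)$ carried by only the $\forall x\,{\sim}\phi$ tag; the cut then closes to empty.

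The main obstacle is only bookkeeping. One must verify that $\mathrm{Cut}_{\phi(t)}$ lies in $PA^{\infty,N,L}_{2,<r}$, which amounts to the usual proviso that $r$ strictly exceeds the rank of $\phi(t)$, mirroring the implicit requirement already present in the $\vee$ Elimination lemma. One must also check that no consecutive Reads are introduced: every Read immediately below another is separated either by a copied input rule $\mathcal{R}$ or by the new Cut, and the internal Reads of $\mathrm{Inverse}^\forall_{{\sim}\phi,x,t}$ have root $\forall x\,{\sim}\phi\ne\exists x\,\phi$. Granted these, the containment $\Gamma(\mathrm{Elim}^\forall_{\phi,x})\subseteq (\exists x\,\phi,\epsilon_\bot,\exists x\,\phi),(\forall x\,{\sim}\phi,\epsilon_\bot,\forall x\,{\sim}\phi)$ follows by the same formula-chasing argument as in the $\vee$ Elimination case.
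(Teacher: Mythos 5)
Your construction is the paper's own proof: the same initial Read on the $\exists x\,\phi$ input, the same transparent copying of every rule other than I$\exists_{\exists x\,\phi}$, and at an I$\exists^{t}$ the same Cut on the instance with one branch continuing to read the input premise and the other branch spliced in as a copy of $\mathrm{Inverse}^\forall_{{\sim}\phi,x,t}$. The extra bookkeeping you flag (rank of the cut formula below $r$, no consecutive Reads, the conclusion-chasing) is left implicit in the paper but is exactly what is needed.
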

\begin{proof}
  We set $\mathrm{Elim}^\forall_{\phi,x} (\langle\rangle)=Read_{PA^{\infty,N,L,-}_{2,<r},\exists x\,\phi,\epsilon_\bot, \exists x\,\phi}$.

  If $\mathrm{Elim}^\forall_{\phi,x} (\sigma)=Read_{PA^{\infty,N,L,-}_{2,<r},\exists x\,\phi,\epsilon, \exists x\,\phi}$ and $\mathcal{R}$ is anything other than I$\exists_{\exists x\,\phi}$, $\mathrm{Elim}^\forall_{\phi,x}(\sigma\mathcal{R})=\mathcal{R}$ and $\mathrm{Elim}^\forall_{\phi,x} (\sigma\mathcal{R}\iota)=Read_{PA^{\infty,N,L,-}_{2,<r},\exists x\,\phi,\epsilon\iota, \exists x\,\phi}$.

  If $\mathcal{R}$ is I$\exists^n_{\exists x\,\phi}$ then $\mathrm{Elim}^\forall_{\phi,x}(\sigma\mathcal{R})=Cut_{\phi_0}$, $\mathrm{Elim}^\forall_{\phi,x}(\sigma\mathcal{R}0)=Read_{PA^{\infty,N,L,-}_{2,<r},\exists x\,\phi,\epsilon\top, \exists x\,\phi}$, and, for all $\tau$, $\mathrm{Elim}^\forall_{\phi,x} (\sigma\mathcal{R}1\tau)=\mathrm{Inverse}^\forall_{{\sim}\phi,x,n}(\tau)$.
\end{proof}

The next two elimination steps don't have inversion lemmas, so we have do a two sided process where we work our way back to introduction rules on both sides. 
\begin{lemma}[$X^\ell$ Elimination]
  For any $X^\ell$, $m$ there is a locally defined function $\mathrm{Elim}^{X}_{X^\ell,m}$ on $PA^{\infty,N,L,-}_{2,<r}$ in $PA^{\infty,N,L}_{2,<r}$ with $\Gamma(\mathrm{Elim}^{X}_{X^\ell,m})\subseteq (X^\ell m,\epsilon_\bot,X^\ell m), (\neg X^\ell m,\epsilon_\bot,\neg X^\ell m)$.
\end{lemma}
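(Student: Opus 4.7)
Since $X^\ell m$ is a variable literal, no inversion is available: its only introduction rule is $Ax_{\{X^\ell m,\neg X^\ell m\}}$, which simultaneously introduces $\neg X^\ell m$. The plan, as suggested in the preceding paragraph, is the two-sided trace hinted at there: follow the first input back to an axiom introducing $X^\ell m$, then switch and follow the second input back to an axiom introducing $\neg X^\ell m$, and close with a terminal $Ax_{\{X^\ell m,\neg X^\ell m\}}$ whose two output formulas will be absorbed by the two stacked Read rules below it.

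Explicitly, the definition of $\mathrm{Elim}^{X}_{X^\ell,m}$ proceeds by induction on $\sigma$, starting with $\mathrm{Elim}^{X}_{X^\ell,m}(\langle\rangle) = Read_{PA^{\infty,N,L,-}_{2,<r}, X^\ell m, \epsilon_\bot, X^\ell m}$. Whenever $\mathrm{Elim}^{X}_{X^\ell,m}(\sigma) = Read_{PA^{\infty,N,L,-}_{2,<r}, X^\ell m, \epsilon, X^\ell m}$ and $\mathcal{R}$ is anything other than $Ax_{\{X^\ell m,\neg X^\ell m\}}$, I set $\mathrm{Elim}^{X}_{X^\ell,m}(\sigma\mathcal{R}) = \mathcal{R}$ and $\mathrm{Elim}^{X}_{X^\ell,m}(\sigma\mathcal{R}\iota) = Read_{PA^{\infty,N,L,-}_{2,<r}, X^\ell m, \epsilon\iota, X^\ell m}$. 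For the $Ax$ branch I switch inputs, placing $\mathrm{Elim}^{X}_{X^\ell,m}(\sigma Ax) = Read_{PA^{\infty,N,L,-}_{2,<r}, \neg X^\ell m, \epsilon_\bot, \neg X^\ell m}$. This inner Read is processed analogously, passing non-$Ax$ rules through and recursing with ever-deeper Reads of root $\neg X^\ell m$; at its $Ax$ branch I place a terminal $Ax_{\{X^\ell m,\neg X^\ell m\}}$.

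Verifying $\Gamma(\mathrm{Elim}^{X}_{X^\ell,m}) \subseteq \{(X^\ell m, \epsilon_\bot, X^\ell m), (\neg X^\ell m, \epsilon_\bot, \neg X^\ell m)\}$ is then a matter of tracing each derived formula and tag downward. The outer and inner Reads respectively produce the two target tags, and no rule below them absorbs them; in particular, the outer Read's $Ax$-branch premise only absorbs the bare formula $\neg X^\ell m$, not the tag $(\neg X^\ell m, \epsilon_\bot, \neg X^\ell m)$. Any auxiliary tag $(X^\ell m, \epsilon\iota, X^\ell m)$ or $(\neg X^\ell m, \epsilon\iota, \neg X^\ell m)$ introduced at a recursive Read is absorbed by the preceding Read on the same input, where it appears in that Read's $\Delta_\mathcal{R}$ for the pass-through branch. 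The terminal $Ax$ derives both $X^\ell m$ and $\neg X^\ell m$; the former is absorbed by the inner Read's $Ax$ branch (whose $\Delta_{Ax} = \{X^\ell m\}$ once the root tag $\neg X^\ell m$ is stripped) and the latter by the outer Read's $Ax$ branch (whose $\Delta_{Ax} = \{\neg X^\ell m\}$).

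The main step requiring care is that the construction respects the prohibition on consecutive Reads. The only place one Read is stacked immediately above another is the input-switching step, but the two Reads there have distinct roots $X^\ell m$ and $\neg X^\ell m$, so by the paper's definition they are not consecutive; everywhere else each Read is separated from the next by the pass-through $\mathcal{R}$.
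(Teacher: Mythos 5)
Your construction is essentially the paper's own proof: the same two-phase trace (Read the $X^\ell m$ input until its $Ax_{\{X^\ell m,\neg X^\ell m\}}$, switch to Reading the $\neg X^\ell m$ input, pass all other rules through), with your explicit terminal $Ax$ at the inner $Ax$ branch coinciding with the paper's uniform copy of $\mathcal{R}$ there, since $Ax$ has no premises. Your absorption bookkeeping (the two halves of the terminal axiom being caught by the inner and outer Read premises) and the observation about distinct roots avoiding consecutive Reads are correct and in fact more detailed than the paper's own verification.
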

\begin{proof}
  We arbitrarily choose to proceed down the $X^\ell m$ branch first, so we set $\mathrm{Elim}^X_{X^\ell,m}(\langle\rangle)=Read_{PA^{\infty,N,L,-}_{2,<r},X^\ell m,\epsilon_\bot,X^\ell m}$.

  Suppose $\mathrm{Elim}^X_{X^\ell,m}(\sigma)=Read_{PA^{\infty,N,L,-}_{2,<r},X^\ell m,\epsilon,X^\ell m}$. If $\mathcal{R}$ is anything other than $Ax_{\{X^\ell m,\neg X^\ell m\}}$ then $\mathrm{Elim}^X_{X^\ell,m}(\sigma\mathcal{R})=\mathcal{R}$ and $\mathrm{Elim}^X_{X^\ell,m}(\sigma\mathcal{R}\iota) =Read_{PA^{\infty,N,L,-}_{2,<r},X^\ell m,\epsilon\iota,X^\ell m}$.

  If $\mathcal{R}$ is $Ax_{\{X^\ell m,\neg X^\ell m\}}$ then $\mathrm{Elim}^X_{X^\ell,m}(\sigma\mathcal{R})=Read_{PA^{\infty,N,L,-}_{2,<r},\neg X^\ell m,\epsilon_\bot,\neg X^\ell m}$.

  If $\mathrm{Elim}^X_{X^\ell,m}(\sigma)=Read_{PA^{\infty,N,L,-}_{2,<r},X^\ell m,\epsilon,X^\ell m}$ then, for any $\mathcal{R}$, $\mathrm{Elim}^X_{X^\ell,m}(\sigma\mathcal{R})=\mathcal{R}$ and $\mathrm{Elim}^X_{X^\ell,m}(\sigma\mathcal{R}\iota) =Read_{PA^{\infty,N,L,-}_{2,<r},\neg X^\ell m,\epsilon\iota,\neg X^\ell m}$.
\end{proof}

\begin{lemma}[$Q^2$ Elimination]
  For any $\phi$ there is a locally defined function $\mathrm{Elim}^{Q^2}_{\phi,X}$ on $PA^{\infty,N,L,-}_{2,<r}$ in $PA^{\infty,N,L}_{2,<r}$ with $\Gamma(\mathrm{Elim}^{Q^2}_{\phi,X})\subseteq (\forall^2X\,\phi,\epsilon_\bot,\forall^2X\,\phi), (\exists^2X\,{\sim}\phi,\epsilon_\bot,\exists^2X\, {\sim}\phi)$.
\end{lemma}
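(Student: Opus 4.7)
The construction follows the pattern of the earlier Elimination lemmas, with $\mathrm{Cut}\Omega^\flat$ playing the role that $\mathrm{Cut}$ played in $\vee$~Elimination and $\exists$~Elimination. We define $\mathrm{Elim}^{Q^2}_{\phi,X}$ by induction on $\sigma$ in three phases. In Phase~1 we start with $\mathrm{Elim}^{Q^2}_{\phi,X}(\langle\rangle)=\mathrm{Read}_{PA^{\infty,N,L,-}_{2,<r},\forall^2X\,\phi,\epsilon_\bot,\forall^2X\,\phi}$, and whenever $\mathrm{Elim}^{Q^2}_{\phi,X}(\sigma)=\mathrm{Read}_{\ldots,\forall^2X\,\phi,\epsilon,\forall^2X\,\phi}$ and $\mathcal{R}$ is a rule other than $\mathrm{I}\forall^{Y^\ell}_{\forall^2X\,\phi}$ (for any $Y^\ell$ of the appropriate level), we pass it through verbatim: $\mathrm{Elim}^{Q^2}_{\phi,X}(\sigma\mathcal{R})=\mathcal{R}$ and $\mathrm{Elim}^{Q^2}_{\phi,X}(\sigma\mathcal{R}\iota)=\mathrm{Read}_{\ldots,\forall^2X\,\phi,\epsilon\iota,\forall^2X\,\phi}$ for each $\iota\in|\mathcal{R}|$.

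In Phase~2, when $\mathcal{R}=\mathrm{I}\forall^{Y^\ell}_{\forall^2X\,\phi}$, we do not place $\mathcal{R}$; instead we switch to reading the second input, setting $\mathrm{Elim}^{Q^2}_{\phi,X}(\sigma\mathcal{R})=\mathrm{Read}_{PA^{\infty,N,L,-}_{2,<r},\exists^2X\,{\sim}\phi,\epsilon_\bot,\exists^2X\,{\sim}\phi}$. Since the two roots $\forall^2X\,\phi$ and $\exists^2X\,{\sim}\phi$ differ, this adjacent pair of Read rules is permitted by the no-consecutive-Reads condition. The first-input position $\epsilon\cdot\top$ (above the $\mathrm{I}\forall$) is retained implicitly for Phase~3, and from this point we pass through every rule of the second input other than $\Omega^\flat_{Z^\ell,\exists^2X\,{\sim}\phi}$ in the usual pattern.

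In Phase~3, when the second-input reading reaches $\Omega^\flat_{Z^\ell,\exists^2X\,{\sim}\phi}$ at some position $\tau$ of our tree, we place $\mathrm{Cut}\Omega^\flat_{Y^\ell,Z^\ell,\exists^2X\,{\sim}\phi}$. Its $\top$ premise is a Read continuing the first-input reading past $\mathrm{I}\forall^{Y^\ell}$, producing a proof of $\phi(Y^\ell)$, which matches the required left-premise shape ${\sim}({\sim}\phi)(Y^\ell)=\phi(Y^\ell)$. Its $\bot$ premise is a Read continuing the second-input reading past $\Omega^\flat_{Z^\ell,\ldots}$, producing the locally defined function on $\{\phi(Z^\ell)\}$, which matches the right-premise tag $\{{\sim}({\sim}\phi)(Z^\ell)\}=\{\phi(Z^\ell)\}$. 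The parameters $Y^\ell$ and $Z^\ell$ are independent in the definition of $\mathrm{Cut}\Omega^\flat$, so no variable alignment is needed; and the eigenvariable condition on $Y^\ell$ is vacuous since $\mathrm{Cut}\Omega^\flat$ has empty conclusion.

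The main point requiring care is the bookkeeping of two simultaneous input positions across Phases~2 and~3, together with the observation that switching Reads with differing roots avoids the consecutive-Read prohibition. To verify the conclusion on $\Gamma(\mathrm{Elim}^{Q^2}_{\phi,X})$, note that in $PA^{\infty,N,L,-}_{2,<r}$ the formula $\forall^2X\,\phi$ is introduced only by $\mathrm{I}\forall^{Y^\ell}_{\forall^2X\,\phi}$ (there is no $\mathrm{I}\exists^2$ in the minus-theory), and $\exists^2X\,{\sim}\phi$ only by $\Omega^\flat_{Z^\ell,\exists^2X\,{\sim}\phi}$; both are excluded from pass-through, so no unwanted copy of either principal formula survives to the root.
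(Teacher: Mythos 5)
Your construction is essentially the paper's own proof: read the $\forall^2X\,\phi$ input until its I$\forall^{Y^\ell}$ rule, switch to reading the $\exists^2X\,{\sim}\phi$ input until its $\Omega^\flat$ rule, then place a Cut$\Omega^\flat$ whose $\top$ and $\bot$ premises resume the two readings, with the same two-position bookkeeping (the paper's $\epsilon(\sigma),\epsilon'(\sigma)$). The proposal is correct, modulo the usual variable-renaming conventions the paper also elides.
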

\begin{proof}
  Arbitrarily, we choose to pass down the $\forall^2$ branch first, so we set $\mathrm{Elim}^{Q^2}_{\phi,X}(\langle\rangle)=Read_{PA^{\infty,N,L,-}_{2,<r},\forall^2X\,\phi,\epsilon_\bot,\forall^2X\,\phi}$. We keep track of additional values $\epsilon(\sigma)$ and $\epsilon'(\sigma)$; initially $\epsilon(\langle\rangle)=\epsilon'(\langle\rangle)=\langle\rangle$.

  Suppose $\mathrm{Elim}^{Q^2}_{\phi,X}(\sigma)=Read_{PA^{\infty,N,L,-}_{2,<r},\forall^2X\,\phi,\epsilon,\forall^2X\,\phi}$. If $\mathcal{R}$ is anything other than I$\forall^{\{Y^\ell\}}_{\forall^2X\,\phi}$ then $\mathrm{Elim}^{Q^2}_{\phi,X}(\sigma\mathcal{R})=\mathcal{R}$ and $\mathrm{Elim}^{Q^2}_{\phi,X}(\sigma\mathcal{R}\iota)=Read_{PA^{\infty,N,L,-}_{2,<r},\forall^2X\,\phi,\epsilon\iota,\forall^2X\,\phi}$.  Set $\epsilon(\sigma\mathcal{R}\iota)=\epsilon(\sigma)\iota$ and $\epsilon'(\sigma\mathcal{R}\iota)=\epsilon'(\sigma)$.

  If $\mathcal{R}$ is I$\forall^{Y^\ell}_{\forall^2X\,\phi}$ then $\mathrm{Elim}^{Q^2}_{\phi,X}(\sigma\mathcal{R})=Read_{PA^{\infty,N,L,-}_{2,<r},\exists^2X\, {\sim}\phi,\epsilon'(\sigma),\exists^2X\, {\sim}\phi}$. We set $\epsilon(\sigma\mathcal{R})=\epsilon(\sigma)$ and $\epsilon'(\sigma\mathcal{R})=\epsilon'(\sigma)$.

  If $\mathrm{Elim}^{Q^2}_{\phi,X}(\sigma)=Read_{PA^{\infty,N,L,-}_{2,<r},\exists^2X\, {\sim}\phi,\epsilon',\exists^2X\, {\sim}\phi}$ and $\mathcal{R}$ is anything other than $\Omega^\flat_{\mathfrak{T},Y^\ell,\exists^2X\, {\sim}\phi}$ then $\mathrm{Elim}^{Q^2}_{\phi,X}(\sigma\mathcal{R})=\mathcal{R}$ and $\mathrm{Elim}^{Q^2}_{\phi,X}(\sigma\mathcal{R}\iota)=Read_{PA^{\infty,N,L,-}_{2,<r},\exists^2X\, {\sim}\phi,\epsilon'\iota,\exists^2X\, {\sim}\phi}$. In this case we set $\epsilon(\sigma\mathcal{R}\iota)=\epsilon(\sigma)$ and $\epsilon'(\sigma\mathcal{R}\iota)=\epsilon'(\iota)$.

  If $\mathcal{R}$ is $\Omega^\flat_{\mathfrak{T},Y^\ell,\exists^2X\, {\sim}\phi}$ then we set $\mathrm{Elim}^{Q^2}_{\phi,X}(\sigma\mathcal{R})=Cut\Omega^\flat_{\mathfrak{T},Z^\ell(\sigma),Y^\ell,\exists^2X\, {\sim}\phi}$. We set $\mathrm{Elim}^{Q^2}_{\phi,X}(\sigma\mathcal{R}\top)=Read_{PA^{\infty,N,L,-}_{2,<r},\forall^2X\,\phi,\epsilon(\sigma)\top,\forall^2X\,\phi}$ and $\mathrm{Elim}^{Q^2}_{\phi,X}(\sigma\mathcal{R}\bot)=Read_{PA^{\infty,N,L,-}_{2,<r},\exists^2X\, {\sim}\phi,\epsilon'(\sigma)\bot,\exists^2X\, {\sim}\phi}$. We set $\epsilon(\sigma\mathcal{R}\top)=\epsilon(\sigma)\ell$, $\epsilon'(\sigma\mathcal{R}\top)=\epsilon'(\sigma)$, $\epsilon(\sigma\mathcal{R}\bot)=\epsilon(\sigma)$, and $\epsilon'(\sigma\mathcal{R}\bot)=\epsilon'(\sigma)\bot$.
\end{proof}

\begin{theorem}
  There is a locally defined function $\mathrm{Reduce}_r$ on $PA^{\infty,N,L,-}_{2,<r+1}$ in $PA^{\infty,N,L}_{2,<r}$ with $\Gamma(\mathrm{Reduce}_r)\subseteq (\emptyset,\epsilon_\bot,\emptyset)$.
\end{theorem}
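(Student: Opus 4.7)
The plan is to define $\mathrm{Reduce}_r$ by recursion as a proof-tree that walks the input one rule at a time, beginning with a Read at position $\langle\rangle$ with root tag $\emptyset$. At each Read state, it branches on the current input rule $\mathcal{R}$. If $\mathcal{R}$ is not a $\mathrm{Cut}_\phi$ with $\mathrm{rank}(\phi)=r$, then the output simply copies $\mathcal{R}$ and continues with fresh Reduce-style Reads at each branch position; all such rules already live in the target theory $PA^{\infty,N,L}_{2,<r}$, including $\mathrm{Cut}_\psi$ rules with $\mathrm{rank}(\psi)<r$ and every $\mathrm{Cut}\Omega^\flat$ rule present in the source. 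If instead $\mathcal{R}$ is $\mathrm{Cut}_\phi$ with $\mathrm{rank}(\phi)=r$, then $\mathrm{Reduce}_r$ inlines the elimination operator appropriate to the outermost connective of $\phi$: $\mathrm{Elim}^X$ for a second-order literal, $\mathrm{Elim}^\vee$ (and symmetrically for $\wedge$), $\mathrm{Elim}^\forall$ (and dually for $\exists$), or $\mathrm{Elim}^{Q^2}$ for a second-order quantifier; for cuts on false first-order literals, combine $\mathrm{True}$ with $\mathrm{Inverse}^\bot$.

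Each Elim is naturally a locally defined function of two inputs with root tags $\phi$ and ${\sim}\phi$. To inline it into a single-input Reduce, I will retarget its Read rules: a Read on the first input at position $\zeta$ becomes a Read on $\mathrm{Reduce}_r$'s input at the shifted position $\epsilon\top\zeta$ with root tag $\emptyset$, and similarly the second input's Reads become Reads at $\epsilon\bot\zeta'$. Before retargeting, each Elim must be lifted from reading $PA^{\infty,N,L,-}_{2,<r}$ to reading $PA^{\infty,N,L,-}_{2,<r+1}$; Lemma \ref{operator_extension} applies because the only new rules (Cuts of rank exactly $r$) have empty formula part and so are rule-by-rule conservative for the non-empty root tags $\phi,{\sim}\phi$ of every Read inside the Elim.

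The self-referential recursion is engineered in the pass-through cases of each Elim: wherever the Elim would copy a non-introduction rule and continue with its own Read on the same input, we instead re-enter $\mathrm{Reduce}_r$'s main loop at the corresponding shifted position. Consequently, any nested $\mathrm{Cut}_\phi$ of rank $r$ sitting inside one of the two sub-proofs is detected and eliminated on the next descent, exactly as the outermost one. No iteration of Reduce is needed: the single proof-tree encodes the full recursion because the Elim steps only ever introduce cuts on strict subformulas of $\phi$, hence of rank strictly less than $r$.

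The step I expect to be the main obstacle is $\mathrm{Elim}^{Q^2}$: the replacement for a rank-$r$ cut on $Q^2X\,\phi$ is a $\mathrm{Cut}\Omega^\flat$ of complexity $(m,\ell)=\mathrm{comp}(\exists^2X\,\phi)$, and this rule must already live in the target theory. It does, because $PA^{\infty,N,L}_{2,<r}$ contains every $C^{\infty,m',\ell'}$ for $(m',\ell')$ in the allowed range, independently of the cut-rank $r$; but this forces the hypothesis that $(m,\ell)$ lies within $(N,L)$, and requires careful bookkeeping of the eigenvariable $Z^\ell$ and of the twin position indices $\epsilon,\epsilon'$ tracking the two inputs, mirroring the construction in the $Q^2$ Elimination proof. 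Once the construction is in place, verifying $\Gamma(\mathrm{Reduce}_r)\subseteq(\emptyset,\epsilon_\bot,\emptyset)$ is a routine trace: each copied rule's conclusion is consumed by the Read immediately below it, and each auxiliary tag $(\emptyset,\epsilon\iota,\emptyset)$ introduced by a Read at branch $\mathcal{R}$ is consumed by the next Read down branch $\iota$, leaving only the initial tag at the root.
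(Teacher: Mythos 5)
Your overall skeleton matches the paper's: walk the input via Read rules with root tag $\emptyset$, copy every rule that is not a $\mathrm{Cut}_\phi$ of rank $r$, and at a rank-$r$ cut splice in $\mathrm{Inverse}^\bot$, $\mathrm{Elim}^\vee$, $\mathrm{Elim}^\forall$, $\mathrm{Elim}^X$ or $\mathrm{Elim}^{Q^2}$ according to the outermost connective; your observation that the needed $\mathrm{Cut}\Omega^\flat$ rules are available in $PA^{\infty,N,L}_{2,<r}$ is also the right thing to check. But the step where you tie the recursion is where your construction, as stated, breaks. The paper defines $\mathrm{Reduce}_{r,\epsilon}$ corecursively and, at a rank-$r$ cut, takes the remainder of the tree to be $\mathrm{Elim}(\mathrm{Reduce}_{r,\epsilon\,0},\mathrm{Reduce}_{r,\epsilon\,1})$: the elimination operator is applied to the \emph{outputs of the recursive calls}, so it only ever reads material from which rank-$r$ cuts have already been removed (the only lift needed is over the Read rules occurring in those outputs). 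You instead retarget the Elim's Reads at the raw input (lifted to $PA^{\infty,N,L,-}_{2,<r+1}$) and handle nested rank-$r$ cuts by ``re-entering $\mathrm{Reduce}_r$'s main loop'' at every pass-through. That move discards the Elim's pending state: once you revert to Reads of the form $(\emptyset,\epsilon',\emptyset)$, the cut formula being eliminated is no longer subtracted and its introduction rule is never intercepted, so the replacement cut on the subformula is never inserted and the cut formula leaks into $\Gamma(\mathrm{Reduce}_r)$, violating the stated conclusion. The alternative reading---let the lifted Elim simply pass rank-$r$ cuts through, as Definition \ref{def:lifting} would---fails differently: it copies a rank-$r$ cut into the output, which is not a rule of the target theory $PA^{\infty,N,L}_{2,<r}$.

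The underlying issue is that rank-$r$ cuts can occur strictly between the cut you are currently eliminating and the introduction rules of its cut formula, i.e., while an Elim is still ``in flight.'' Handling them in a single inlined pass requires carrying a stack of pending eliminations (each with its own tracked formula, subtraction set, and pair of shifted positions), which your proposal does not set up; the paper sidesteps this bookkeeping entirely by composing each Elim with the recursive $\mathrm{Reduce}$ continuations via the $\bar F(\cdot)$ and lifting machinery, so that no Elim ever encounters a rank-$r$ cut. Your appeal to Lemma \ref{operator_extension} (rank-$r$ cuts have empty conclusion sequent, hence rule-by-rule conservativity) establishes only that the lift does not disturb the Elim's conclusion; it does not address what the lifted Elim should \emph{do} with such a cut, which is precisely the missing idea.
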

\begin{proof}
  The natural way to describe this function is coinductively. For any $\epsilon$, we can describe the operation starting at position $\epsilon$, and we do this simultaneously for all $\epsilon$, using the operations defined above.

  We define $\mathrm{Reduce}_{r,\epsilon}(\langle\rangle)$ to be Read$_{PA^{\infty,N,L,-}_{2,<r+1},\emptyset,\epsilon,\emptyset}$. Suppose we have defined $\mathrm{Reduce}_{r,\epsilon}(\sigma)$ to be Read$_{PA^{\infty,N,L,-}_{2,<r+1},\emptyset,\epsilon\epsilon',\emptyset}$ and consider some $\mathcal{R}$.

  If $\mathcal{R}$ is anything other than Cut$_\phi$ with $rk(\phi)=r$, we set $\mathrm{Reduce}_{r,\epsilon}(\sigma\mathcal{R})=\mathcal{R}$ and $\mathrm{Reduce}_{r,\epsilon}(\sigma\mathcal{R}\iota)=Read_{PA^{\infty,N,L,-}_{2,<r},\emptyset,\epsilon\epsilon'\iota,\emptyset}$.

  So suppose $\mathcal{R}$ is Cut$_\phi$ where $rk(\phi)=r$. We consider cases on $\phi$. If $\phi$ is $\psi_0\wedge\psi_1$, we take $\mathrm{Reduce}_{r,\epsilon}(\sigma\mathcal{R}\tau)$ to be $\mathrm{Elim}^\vee_{{\sim}\psi_0,{\sim}\psi_1}(\mathrm{Reduce}_{r,\epsilon 1},\mathrm{Reduce}_{r,\epsilon 0})(\tau)$. Similarly, if $\phi$ is $\psi_0\vee\psi_1$, we take $\mathrm{Reduce}_{r,\epsilon}(\sigma\mathcal{R}\tau)$ to be $\mathrm{Elim}^\vee_{\psi_0,\psi_1}(\mathrm{Reduce}_{r,\epsilon 0},\mathrm{Reduce}_{r,\epsilon 1})(\tau)$.

  If $\phi$ is $\forall x\,\psi$, we take $\mathrm{Reduce}_{r,\epsilon}(\sigma\mathcal{R}\tau)$ to be $\mathrm{Elim}^\exists_{{\sim}\psi,x}(\mathrm{Reduce}_{r,\epsilon 1},\mathrm{Reduce}_{r,\epsilon 0})(\tau)$. Similarly, if $\phi$ is $\exists x\,\psi$, we take $\mathrm{Reduce}_{r,\epsilon}(\sigma\mathcal{R}\tau)$ to be $\mathrm{Elim}^\exists_{\psi,x}(\mathrm{Reduce}_{r,\epsilon 0},\mathrm{Reduce}_{r,\epsilon 1})(\tau)$.

  If $\phi$ is a literal $\dot{R}t_1\cdots t_m$ then, since it is closed, it is true or false, so we take $\mathrm{Reduce}_{r,\epsilon}(\sigma\mathcal{R}\tau)$ to be $\mathrm{Inverse}^\bot_{{\sim}\phi}(\mathrm{Reduce}_{r,\epsilon 1})(\tau)$ or $\mathrm{Inverse}^\bot_{\phi}(\mathrm{Reduce}_{r,\epsilon 0})(\tau)$ respectively.

  If $\phi$ is a literal $X^\ell m$ then we take $\mathrm{Reduce}_{r,\epsilon}(\sigma\mathcal{R}\tau)$ to be $\mathrm{Elim}^X_{X^\ell,n}(\mathrm{Reduce}_{r,\epsilon 0}, \mathrm{Reduce}_{r,\epsilon 1})(\tau)$, and if $\phi$ is $\neg X^\ell m$ then we take $\mathrm{Reduce}_{r,\epsilon}(\sigma\mathcal{R}\tau)$ to be $\mathrm{Elim}^X_{X^\ell,n}(\mathrm{Reduce}_{r,\epsilon 1}, \mathrm{Reduce}_{r,\epsilon 0})(\tau)$.

  If $\phi$ is $\forall^2X\,\psi$ then we take $\mathrm{Reduce}_{r,\epsilon}(\sigma\mathcal{R}\tau)$ to be $\mathrm{Elim}^{Q^2}_{\phi,X}(\mathrm{Reduce}_{r,\epsilon 1}, \mathrm{Reduce}_{r,\epsilon 0})(\tau)$, and if $\phi$ is $\exists^2X\,\psi$ then we take $\mathrm{Reduce}_{r,\epsilon}(\sigma\mathcal{R}\tau)$ to be $\mathrm{Elim}^{Q^2}_{\phi,X}(\mathrm{Reduce}_{r,\epsilon 0}, \mathrm{Reduce}_{r,\epsilon 1})(\tau)$.

  We take $\mathrm{Reduce}_r$ to be $\mathrm{Reduce}_{r,\langle\rangle}$.
\end{proof}

\begin{cor}
  If $d$ is a deduction in $PA^{\infty,N,L}_{2,<r}$ then there is a deduction $d'$ in $PA^{\infty,n}_{2,<0}$ with $\Gamma(d')\subseteq\Gamma(d)$.
\end{cor}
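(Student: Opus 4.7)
The plan is to prove the corollary by induction on $r$, applying $\mathrm{Reduce}_{r-1}$ at each step to drop the cut rank by one. The base case $r=0$ is trivial: take $d' = d$. For the inductive step, given a deduction $d$ in $PA^{\infty,N,L}_{2,<r}$ with $r>0$, the goal is to produce an intermediate deduction $d''$ in $PA^{\infty,N,L}_{2,<r-1}$ with $\Gamma(d'') \subseteq \Gamma(d)$, and then invoke the induction hypothesis on $d''$.

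To produce $d''$, I apply $\mathrm{Reduce}_{r-1}$. As stated, this operator is a locally defined function on the Read-free theory $PA^{\infty,N,L,-}_{2,<r}$, but $d$ may itself use Read rules. Hence I first invoke Lemma \ref{operator_extension} to lift $\mathrm{Reduce}_{r-1}$ to a locally defined function $(\mathrm{Reduce}_{r-1})^\uparrow$ acting on the full theory $PA^{\infty,N,L}_{2,<r}$. The required rule-by-rule conservativity holds: the Read rules that appear inside $\mathrm{Reduce}_{r-1}$ have $\Theta$-sets consisting of ordinary formulas (such as $\phi_L \wedge \phi_R$ from $\mathrm{Elim}^\vee$, or $\forall x\,\phi$ from $\mathrm{Elim}^\forall$), while the extra Read rules distinguishing $PA^{\infty,N,L}_{2,<r}$ from $PA^{\infty,N,L,-}_{2,<r}$ contribute only tags (not formulas) to the conclusion side, so the required disjointness is immediate.

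With the lifting in hand, I apply the bar-construction: set $d'' = \overline{(\mathrm{Reduce}_{r-1})^\uparrow}(d)$. By the theorem bounding the conclusion of $\overline{F}$ together with Lemma \ref{operator_extension}, the conclusion of $d''$ satisfies
\[
\Gamma(d'') \;\subseteq\; (\Gamma(d)\setminus\emptyset) \cup \bigl(\Gamma(\mathrm{Reduce}_{r-1}) \setminus \{(\emptyset,\epsilon_\bot,\emptyset)\}\bigr) \;\subseteq\; \Gamma(d).
\]
Iterating this construction $r$ times yields the desired deduction in $PA^{\infty,N,L}_{2,<0}$.

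The main obstacle is showing that $d''$ is actually \emph{well-founded}, i.e., a deduction and not merely a proof-tree. Well-foundedness of $d$ alone is not enough in principle, because $\overline{F}(d)$ inserts new inference structure between consultations of $d$. The point is that $\mathrm{Reduce}_{r-1}$ was engineered so that only finitely many non-Read rules separate successive Read operations (this is why consecutive Reads were forbidden and the Rep rule was available), so any infinite branch in $d''$ would project to an infinite branch in $d$, contradicting its well-foundedness. In the ``qualitative'' framework promised by the paper, this argument is made rigorous by assigning higher-order ordinal bounds to locally defined functions and tracking how these bounds compose across each of the $r$ iterations; the base case and each inductive step yield an ordinal decoration witnessing well-foundedness of the cut-reduced tree.
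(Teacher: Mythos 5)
Your construction is exactly the paper's: lift each $\mathrm{Reduce}_i$ to $PA^{\infty,N,L}_{2,<i+1}$ via Lemma \ref{operator_extension} and compose the bar-applications, the conclusion containment following from the theorem on $\Gamma(\bar F(d))$; your conservativity check is also fine (note only that the new Read rules' conclusions do contain formulas, namely $\Delta(\epsilon)\setminus\Theta$, but the definition of rule-by-rule conservativity only constrains their \emph{tag} part, which is indeed disjoint from the formula-only sets $\Theta$ occurring in $\mathrm{Reduce}_r$ and its subroutines).

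The one point to flag is your well-foundedness paragraph. The claim that any infinite branch of $\overline{\mathrm{Reduce}_{r-1}^\uparrow}(d)$ ``projects to an infinite branch of $d$'' is not sound: when a cut of rank $r$ is reduced, the reading position jumps to the root of a \emph{sibling} subtree (e.g.\ in $\mathrm{Elim}^\vee$ the second premise of the new Cut starts reading the other input from $\epsilon_\bot$), so the positions of $d$ consulted along a single output branch need not be linearly ordered under $\sqsubseteq$; consulting positions of unbounded depth is perfectly consistent with $d$ being well-founded, so no contradiction is obtained. The paper does not attempt this argument at all: its proof of the corollary is purely the construction, and well-foundedness of the resulting proof-tree is deliberately deferred to the next section, where it follows from composing ordinal bounds (the $\omega^\beta$ bound on $\mathrm{Reduce}_r$, the bound-lifting theorem, and the theorem that $\bar F(d)$ inherits the bound $o^F_{h(\sigma)}$ evaluated at the bounds of $d$). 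So either drop the projection argument and cite that machinery, as your last sentence already gestures at, or treat the corollary (as the paper implicitly does) as producing a proof-tree whose well-foundedness is established later.
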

\begin{proof}
  By Lemma \ref{operator_extension}, we may extend each $\mathrm{Reduce}_i$ to a function on $PA^{\infty,N,L}_{2,<i+1}$. We therefore take $d'=\overline{\mathrm{Reduce}_0^\uparrow}(\cdots \overline{\mathrm{Reduce}_{r-1}^\uparrow}(d))$.
\end{proof}

\subsection{Collapsing}

The remaining step in a proof of cut-elimination is collapsing: showing that we can remove Cut$\Omega^\flat$ inference rules in our deduction.

\begin{theorem}
  Let $d$ be a proof-tree in $PA^{\infty,N,L,m,\ell}_{2,<0}$ with $m+\ell>0$ such that
\begin{itemize}
\item   whenever $\exists^2 X\,\phi$ appears as a subformula of a formula in $\Gamma(d)$, $comp(\exists^2X\,\phi)<(m,\ell)$,
\item if $t\in\Gamma(d)_{\mathsf{t}}$ then $comp(\exists^2X\,\Theta_t)<(m,\ell)$.
\end{itemize}
Let $(m',\ell')$ be the predecessor of $(m,\ell)$. Then there is a proof-tree $D_{m,\ell}(d)$ in $PA^{\infty,N,L,m',\ell'}_{2,<0}$ with $\Gamma(D_{m,\ell}(d))\subseteq\Gamma(d)$.
\end{theorem}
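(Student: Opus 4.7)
The plan is to define $D_{m,\ell}$ coinductively as a locally defined function on $PA^{\infty,N,L,m,\ell}_{2,<0}$ in $PA^{\infty,N,L,m',\ell'}_{2,<0}$, with trivial root tag $\emptyset$, in the style of the cut-elimination operators $\mathrm{Reduce}_r$ built above. At each position, $D_{m,\ell}$ reads the input rule $\mathcal{R}$ via a Read inference and then either copies $\mathcal{R}$ (installing fresh Reads above each premise to continue the walk) or, when $\mathcal{R}$ is a $\mathrm{Cut}\Omega^\flat$ of complexity exactly $(m',\ell')$, performs the local elimination described in the next paragraph. The copying case handles every remaining inference of $PA^{\infty,N,L,m,\ell}_{2,<0}$: the base rules and all $\Omega^\flat$ rules live in $PA^{\infty,N,L,-}_{2,<0}$, Read rules of depth $<m$ or level $<\ell$ are carried over unchanged, and $\mathrm{Cut}\Omega^\flat$ of strictly smaller complexity than $(m',\ell')$ is still permitted by the target theory.

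For the critical case, $d(\sigma)=\mathrm{Cut}\Omega^\flat_{Z^\ell,Y^\ell,\exists^2X\,\phi}$ with $\mathrm{comp}(\exists^2X\,\phi)=(m',\ell')$, the top premise $d_\sigma^\top$ is a proof of ${\sim}\phi(Z^\ell)$ with $Z^\ell$ an eigenvariable (hence free to be renamed) and the bottom subtree $d_\sigma^\bot$ is a function whose root tag is $\{{\sim}\phi(Y^\ell)\}$. Semantically, the elimination is the application $\overline{d_\sigma^\bot}(d_\sigma^\top[Z^\ell\mapsto Y^\ell])$; we realize it with a \emph{two-level Read pattern}. $D_{m,\ell}$ installs a Read inference that walks $d_\sigma^\bot$ and copies whichever rules it emits, except that when $d_\sigma^\bot$ itself exposes a Read whose root tag is $\{{\sim}\phi(Y^\ell)\}$, we switch to a second Read that walks $d_\sigma^\top$, applying the renaming $Z^\ell\mapsto Y^\ell$ on the fly with the substitution technique used in the construction of $F^{\phi,X^\ell\mapsto\psi}$. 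Within the branches of this inner Read we resume the outer $D_{m,\ell}$ walk, so that further $\mathrm{Cut}\Omega^\flat$ of complexity $(m',\ell')$ lurking inside either subtree are collapsed recursively; this apparent recursion is implemented concretely as the ill-founded structure of the proof-tree $D_{m,\ell}$ itself.

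The inclusion $\Gamma(D_{m,\ell}(d))\subseteq\Gamma(d)$ follows by the same argument used for $\bar F(d)$: the only rule we suppress is $\mathrm{Cut}\Omega^\flat$, whose conclusion is $\emptyset$ and therefore contributes nothing below. For membership in $PA^{\infty,N,L,m',\ell'}_{2,<0}$, every tag in a Read we emit is either inherited from $\Gamma(d)_{\mathsf{t}}$, where it has complexity $<(m,\ell)$ by hypothesis, or is $\{{\sim}\phi(Y^\ell)\}$ produced while collapsing a $\mathrm{Cut}\Omega^\flat$ of complexity exactly $(m',\ell')$, in which case the corresponding Read sits in $R^{\infty,N,L,m',\ell'}$ and is permitted in the target. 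The main obstacle is the critical-case interleaving: correctly switching between the outer walk on $d_\sigma^\bot$ and the inner walk on $d_\sigma^\top$, propagating the $Z^\ell\mapsto Y^\ell$ renaming exactly as in the embedding argument, preserving the no-consecutive-Reads invariant (for which the Rep rule separator is essential), and ensuring that the recursive collapse of nested critical $\mathrm{Cut}\Omega^\flat$'s yields a bona fide proof-tree. The coinductive, ill-founded framing of $D_{m,\ell}$ is precisely what absorbs this recursion; the quantitative justification that this process really produces something with the expected ordinal bounds is postponed to the analysis promised in the introduction.
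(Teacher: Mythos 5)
Your overall plan is essentially the paper's: walk corecursively down $d$, copy non-critical rules, and at a critical $\mathrm{Cut}\Omega^\flat$ replace the cut by evaluating the $\bot$ premise, viewed as a locally defined function, against the $\top$ premise by resolving its Read rules, with critical cuts inside both subtrees collapsed as they are reached. The paper packages this as $D_{m,\ell}(d)=\overline{D_{m,\ell}(d_{\langle\bot\rangle})^{\uparrow}}\bigl(D_{m,\ell}(d_{\langle\top\rangle})\bigr)$, implemented by a map $h$ into $\dom(d)$ together with, for each critical tag currently open, a pointer $\pi_{t,\epsilon}(\sigma)$ into $\dom(d)$. The genuine gap in your version is that the lift $(\cdot)^{\uparrow}$ has disappeared. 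The function $d_{\langle\bot\rangle}$ reads only over $PA^{\infty,N,L,m,\ell,-}_{2,<0}$: its Read rules have one branch for each rule of that theory and no others. What it must consume, however, is the (recursively collapsed) top premise, which lives in $PA^{\infty,N,L,m',\ell'}_{2,<0}$, and that theory is not contained in the minus theory: in particular it contains Read rules over theories of depth $\geq m$ and level $<\ell$, exactly the ``high depth, low level'' rules the paper points to when contrasting $PA^{\infty,N,L,m,\ell,-}_{2,<0}$ with $PA^{\infty,N,L,m,\ell}_{2,<0}$. When your inner Read meets such a rule there is no branch to follow, and the ``switch to a second Read'' stalls. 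The paper's construction devotes an explicit clause to this (``otherwise, we set $D_{m,\ell,\epsilon}(d)(\sigma)$ to be the rule given by Definition \ref{def:lifting}''): rules outside the domain are passed through as the identity, Read rules being modified as in the lift, and Lemma \ref{operator_extension} (rule-by-rule conservativity) is what keeps the conclusion under control. This lifting step is the distinctive ingredient of the collapse, not an omissible technicality.

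Two further points. Your membership argument asserts that a Read with root tag of complexity $(m',\ell')$ ``sits in $R^{\infty,N,L,m',\ell'}$ and is permitted in the target''; by the definition of the hierarchy it is not ($PA^{\infty,N,L,m',\ell'}_{2,<0}$ only contains $R^{\infty,N,L,m'',\ell''}$ for $m''<m'$, $\ell''\leq L$ or for $m''\leq N$, $\ell''<\ell'$), and indeed the point of the construction is that the critical Reads are never emitted: they are precisely the rules that must be resolved against the corresponding top premise. (The paper's own indexing of which complexity is ``critical'' is admittedly slippery, but on any consistent reading the Reads tied to the eliminated $\mathrm{Cut}\Omega^\flat$'s are absent from the target and must be consumed.) Second, nested critical cuts leave several critical tags open simultaneously, so a single outer/inner alternation does not suffice; you need, as the paper does, a separate pointer $\pi_{t,\epsilon}(\sigma)$ for each open critical tag, so that each critical Read is matched with the correct top premise at the correct position.
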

$D_{m,\ell}$ is, itself, essentially a locally defined function modulo some technicalities about consecutive Read rules, but we do not need to represent it as one, so skip the technicalities to represent it as one.

$d$ cannot contain $\Omega^\flat$ inferences of complexity $(m,\ell)$, so the issue is collapsing Cut$\Omega^\flat$ inferences. When $d(\langle\rangle)$ is a Cut$\Omega^\flat$ inference of complexity $(m,\ell)$, we would like to view $d_{\langle\bot\rangle}$ as a locally defined function on domain $PA^{\infty,N,L,m,\ell,-}_{2,<0}$. We should be able to collapse $d_{\langle\top\rangle}$, giving us $D_{m,\ell}(d_{\langle\top\rangle})$ in $PA^{\infty,N,L,m',\ell'}_{2,<0}$. We can lift our function $d_{\langle\bot\rangle}$ to this domain, so we should end up with $D_{m,\ell}(\overline{d_{\langle\bot\rangle}^{\uparrow}}(D_{m,\ell}(d_{\langle\top\rangle})))$. Composing, this is the same as $\overline{D_{m,\ell}(d_{\langle\bot\rangle})^\uparrow}(D_{m,\ell}(d_{\langle\top\rangle}))$. The steps below represent precisely this process.

The restrictions on $\Gamma(d)$ do not hold inductively---$d$ could contain a Cut$\Omega^\flat$ with complexity $(m',\ell')<(m,\ell)$, and then an $\Omega^\flat$ above that introducing a formula with complexity $(m'',\ell'')$ where $\ell'$ is high even though $m''$ is low. However the restriction that the conclusion of this $\Omega^\flat$ rule is a subformula of our low complexity Cut$\Omega^\flat$ rule ensures that no $X^\ell$ appears free, which is all we really need.
\begin{proof}
  Again, we need to define ``collapsing above $\epsilon$'' as well. We will define $D_{m,\ell,\epsilon}(d)(\sigma)$ by induction on $\sigma$, maintaining, for each $t$ in $\Gamma^{\leftarrow}(\sigma)$ with $comp(\exists^2X\,\Theta_t)=(m,\ell)$, a $\pi_{t,\epsilon}(\sigma)$ in $\dom(d)$.

  We initially set $h(\langle\rangle)=\epsilon$. Suppose we have defined $h(\sigma)$; we now wish to define $D_{m,\ell}(d)(\sigma)$. If $d(h(\sigma))$ is anything other than a Cut$\Omega^\flat_{Z^\ell,Y^\ell,\exists^2X\,\phi}$ with $comp(\exists^2X\,\phi)=(m,\ell)$ or a Read$_{\mathfrak{T},t}$ with $comp(\exists^2X\,\Theta_t)=(m,\ell)$ then we set $D_{m,\ell,\epsilon}(d)(\sigma)=d(h(\sigma))$ and, for each $\iota$, $h(\sigma\iota)=h(\sigma)\iota$ and for each $t$, $\pi_t(\sigma\iota)=\pi_t(\sigma)$.

  Suppose $d(h(\sigma))$ is a Cut$\Omega^\flat_{Z^\ell,Y^\ell,\exists^2X\,\phi}$ with $comp(\exists^2X\,\phi)=(m,\ell)$. Then we set $D_{m,\ell}(d)(\sigma)=Rep$, $h(\sigma\top)=h(\sigma)\bot$, and $\pi_{{\sim}\phi,\langle\rangle,{\sim}\phi}(\sigma\top)=\sigma\bot$.

  Suppose $d(h(\sigma))$ is some Read$_{\mathfrak{T},t}$ with $comp(\exists^2X\,\Theta_t)=(m,\ell)$.   Let $\mathcal{R}$ be $D_{m,\ell,\pi_{t,\epsilon}(\sigma)}(\epsilon_t)$. If $\mathcal{R}$ belongs to $PA^{\infty,N,L,m,\ell,-}_{2,<0}$ then we extend $h(\sigma)=h(\sigma)\mathcal{R}$ and continue. Otherwise, we set $D_{m,\ell,\epsilon}(d)(\sigma)$ to be the rule given by Definition \ref{def:lifting} (that is, $\mathcal{R}$ if $\mathcal{R}$ is not a Read rule, the modification of $\mathcal{R}$ if $\mathcal{R}$ is a Read rule) and, for each $\iota$, $h(\sigma\iota)=h(\sigma)Rep$.

  We can then take $D_{m,\ell}(d)$ to be $D_{m,\ell,\langle\rangle}(d)$.
\end{proof}

Putting these results together gives the following.
\begin{theorem}
  If there is a deduction of an arithmetic sequent $\Gamma$ in $PA_2$ then, for some $N,L$, there is a proof-tree $d$ in $PA_{2,<0}^{\infty,N,L,0,0}$ with $\Gamma(d)\subseteq\Gamma$.
\end{theorem}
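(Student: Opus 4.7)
The plan is to chain together the three major constructions established above: embedding (Theorem \ref{thm:embedding}), cut elimination (the corollary following the $\mathrm{Reduce}_r$ theorem), and collapsing (the $D_{m,\ell}$ theorem). Given a $PA_2$-deduction of the arithmetic sequent $\Gamma$, the goal is to successively refine it into a proof-tree in the increasingly restrictive systems $PA^{\infty,N,L,m,\ell}_{2,<0}$ until we reach $(m,\ell)=(0,0)$.

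First I would apply Theorem \ref{thm:embedding} to the given deduction. Since $\Gamma$ is arithmetic, it has no free second-order variables and the first-order parameters can be instantiated arbitrarily; this yields, for some $N$, $L$, and $r$ depending on the finitary proof, a valid proof-tree $d_0$ in $PA^{\infty,N,L}_{2,<r}$ with $\Gamma(d_0)\subseteq\Gamma$. Second, I would eliminate cuts by iteratively applying the lifted $\overline{\mathrm{Reduce}_i^\uparrow}$ operators for $i=r-1,r-2,\ldots,0$, producing a proof-tree $d_1$ in $PA^{\infty,N,L}_{2,<0}$ whose conclusion is still contained in $\Gamma$. By the definition $PA^{\infty,N,L}_{2,<0}=\bigcup_{m<N,\ell<L}PA^{\infty,N,L,m,\ell}_{2,<0}$, the tree $d_1$ lives in $PA^{\infty,N,L,m_0,\ell_0}_{2,<0}$ for some maximal $(m_0,\ell_0)$.

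Third, I would apply the collapsing theorem iteratively, walking the pair $(m,\ell)$ down the reverse-lexicographic order. Because $\Gamma$ is arithmetic, $\Gamma(d_1)\subseteq\Gamma$ contains no second-order existential subformulas and no tags at all, so both hypotheses of the collapsing theorem hold vacuously. Therefore $D_{m_0,\ell_0}(d_1)$ is a proof-tree in $PA^{\infty,N,L,m_0',\ell_0'}_{2,<0}$ with the same arithmetic conclusion; the hypotheses hold again for the new complexity, so the theorem applies once more, and so on. After at most $NL$ iterations we arrive at the desired proof-tree $d$ in $PA^{\infty,N,L,0,0}_{2,<0}$ with $\Gamma(d)\subseteq\Gamma$.

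The hard work has all been discharged in the three constituent theorems; the main delicate point here is verifying that the hypotheses of the collapsing theorem remain satisfied throughout the iteration. The key observation that makes this trivial is precisely that $\Gamma$ is arithmetic: since no tags or $\exists^2X\,\phi$ subformulas ever appear in the conclusion at any stage, the complexity constraints are preserved all the way down to $(0,0)$ for free. If one wanted to extend the statement to non-arithmetic $\Gamma$, one would instead have to initialize $(m_0,\ell_0)$ above the complexity of any second-order subformula occurring in $\Gamma$ and stop the collapsing iteration just above that bound.
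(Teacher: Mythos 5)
Your proposal is correct and matches the paper's intended argument: the paper gives no separate proof beyond ``putting these results together,'' and its later consistency theorem spells out exactly your chain---embed via Theorem \ref{thm:embedding}, iterate the lifted $\mathrm{Reduce}$ operators to remove cuts, then iterate the collapsing operators $D_{m,\ell}$ down the reverse-lexicographic order to reach complexity $(0,0)$. Your added check that the hypotheses of the collapsing theorem persist because an arithmetic $\Gamma$ contains no $\exists^2$ subformulas and no tags, and because $\Gamma(D_{m,\ell}(d))\subseteq\Gamma(d)$, is exactly the verification the paper leaves implicit.
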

This is rather unsatisfying on its own since $d$ might be ill-founded. The next section shows that we can improve this to a well-founded deduction.

\section{Ordinals, Qualitatively}

To complete a proof of cut-elimination, we will need to restrict to our analog of well-founded deductions---deductions with ordinal bounds, in the sense we now introduce. In many cases, these bounds will be functions on ordinals, not simply ordinals. In this section we carry out the ``qualitative'' part of the argument, that is, the part that can be carried out without a formal ordinal notation.

\subsection{Functions on Ordinals}

We consider certain \emph{sorts}:
\begin{itemize}
\item $Ord$ is the sort of ordinals,
\item if $\theta,\theta'$ are sorts then $\theta\rightarrow\theta'$ is the sort of functions from sort $\theta$ to sort $\theta'$,
\item if $I$ is a set and, for each $i\in I$, $\theta_i$ is a sort then $\prod_{i\in I}\theta_i$ is a sort.
\end{itemize}

\begin{definition}
  When $f,g$ have the same sort, we define $f<g$ by induction on the sort:
  \begin{itemize}
  \item if the sort of $f,g$ is $Ord$ then $f<g$ is the usual comparison of ordinals,
  \item if $f,g$ have sort $\theta\rightarrow\theta'$ then $f<g$ if there is some $a$ of sort $\theta$ so that whenever $a<b$, $f(a)<g(a)$,
  \item if $f,g$ have sort $\prod_{i\in I}\theta_i$ then $f<g$ if, for each $i\in I$, $f_i<g_i$.
  \end{itemize}
\end{definition}

\begin{lemma}
In every sort, pairs have common upper bounds and increasing chains have upper bounds.
\end{lemma}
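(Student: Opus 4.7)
The plan is to induct on the structure of sorts, proving both halves of the lemma (pairs and chains) simultaneously.

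For the base sort $Ord$, a pair $\alpha,\beta$ has the common upper bound $\max(\alpha,\beta)$, and a set-indexed increasing chain of ordinals has its supremum as an upper bound. For the product sort $\prod_{i\in I}\theta_i$, the order is coordinatewise, so I would apply the inductive hypothesis in each factor $\theta_i$ to the $i$-th projections of the given pair or chain and assemble the coordinatewise upper bounds into a single element of $\prod_{i\in I}\theta_i$.

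For the function sort $\theta\to\theta'$, the pair case is routine: given $f,g$, let $h(a)$ be an upper bound of $f(a)$ and $g(a)$ in $\theta'$ via the inductive hypothesis; then $h$ dominates both $f$ and $g$ pointwise everywhere, and hence in the dominance order on $\theta\to\theta'$.

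The hard part will be the chain case for the function sort. The difficulty is that $f_\gamma<f_{\gamma'}$ only guarantees \emph{eventual} pointwise dominance above some threshold, so the pointwise family $\langle f_\gamma(a)\rangle_\gamma$ need not be a chain in $\theta'$ and the inductive hypothesis on $\theta'$ does not directly apply to it. To get around this, I would first note that within any fixed sort, the conjunction ``pairs have common upper bounds and increasing chains have upper bounds'' already implies the stronger statement ``every set-indexed family has an upper bound'': well-order the index set as some ordinal $\lambda$ and define an auxiliary increasing chain $\langle g_\gamma\rangle_{\gamma<\lambda}$ by transfinite recursion, taking a pair upper bound of $g_\gamma$ and the next family member at successor stages and a chain upper bound at limits; an upper bound of this chain then bounds the original family. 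Applying this derived consequence inside $\theta'$, I would define $h(a)$ to be an upper bound of the set $\{f_\gamma(a)\mid\gamma<\lambda\}\subseteq\theta'$; this $h$ dominates each $f_\gamma$ pointwise and therefore is an upper bound of the original chain in $\theta\to\theta'$.
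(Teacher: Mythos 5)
Your proof is correct, and for the only nontrivial case---increasing chains in a function sort $\theta\to\theta'$---it takes a genuinely different route from the paper. The paper attacks the chain $f_0<f_1<\cdots$ by first collecting the witnesses $a_i$ of eventual dominance, using the chain case of the inductive hypothesis \emph{in the domain sort} $\theta$ to bound them by a single threshold $a$, so that above $a$ the pointwise values $f_i(b)$ really do form a chain in $\theta'$; it then applies the chain case in $\theta'$ only at those $b>a$, defining the bound arbitrarily below the threshold. You instead never look for a threshold: you first upgrade ``pairs and chains have upper bounds'' to ``every set-indexed family has an upper bound'' by a transfinite recursion (pair bounds at successors, chain bounds at limits), and then bound the possibly non-chain family $\{f_\gamma(a)\}_\gamma$ at \emph{every} $a$. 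Your route buys a reusable directedness principle (one the paper in effect needs later anyway, when it takes suprema over set-indexed families of bounds in the collapsing argument) and avoids the threshold bookkeeping and any use of the inductive hypothesis in $\theta$; the paper's route stays strictly within the stated hypotheses and handles strictness more directly, since its bound strictly dominates each $f_i$ pointwise past the threshold. Two small points you should tidy, both at the same level of informality as the paper itself: in your auxiliary recursion the chain $\langle g_\gamma\rangle$ is only weakly increasing unless you strictify (e.g.\ by passing to something strictly above, which exists in every sort), and your final $h$ only dominates each $f_\gamma$ non-strictly pointwise, whereas the order is defined by strict eventual dominance; this is repaired by noting that for each $\gamma$ one has $h(b)\geq f_{\gamma+1}(b)>f_\gamma(b)$ for all $b$ past the witness for $f_\gamma<f_{\gamma+1}$, so $h$ is indeed above $f_\gamma$ in the sense of the definition.
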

\begin{proof}
  Both proofs are by induction on sorts. The casse where the sort is Ord is immediate, as are the cases of a product sort.

  Consider a sort $\theta\rightarrow\theta'$. Given $f,g$, we can use the inductive hypothesis to define $h(a)$ to be an upper bound of $f(a),g(a)$ for each $a$ of sort $\theta$.

  Suppose we have $f_0<f_1<\cdots$ in this sort. We choose an increasing sequence so $a_i$ is such that $b>a_i$ implies $f_i(b)<f_{i+1}(b)$. (At successors, we can use the common upper bound with $a_i$ to make sure $a_i<a_{i+1}$, and at limits we can use the inductive hypothesis followed by the common upper bound.) Finally, we use the inductive hypothesis, giving us an $a$ so that whenever $b>a$, $f_i(b)<f_{i+1}(b)$ for all $i$. Then we define $f$ arbitrarily except for $b>a$, and for $b>a$ we choose some $f(b)>f_i(b)$ for all $i$ using the inductive hypothesis.
\end{proof}

\begin{lemma}
  Every sort is well-founded under $<$.
\end{lemma}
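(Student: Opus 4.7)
The plan is to prove well-foundedness by induction on the sort. For the base sort $Ord$, this is just the usual well-foundedness of the ordinals.

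For a product sort $\prod_{i\in I}\theta_i$ (taking $I$ nonempty, else the order is degenerate), an infinite strictly descending chain $f^0 > f^1 > f^2 > \cdots$ projects, by the componentwise definition of $<$, to an infinite strictly descending chain $f^0_i > f^1_i > f^2_i > \cdots$ in each coordinate sort $\theta_i$, contradicting the inductive hypothesis.

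For a function sort $\theta\rightarrow\theta'$, suppose for contradiction that $f_0 > f_1 > f_2 > \cdots$ is an infinite strictly descending chain. For each $n$, choose a witness $a_n$ of sort $\theta$ for the inequality $f_n > f_{n+1}$, so that whenever $b > a_n$ in $\theta$ we have $f_n(b) > f_{n+1}(b)$ in $\theta'$. Using the pairwise-upper-bounds half of the previous lemma, iteratively smooth the sequence $(a_n)$ into an increasing chain $a'_0 < a'_1 < \cdots$ with $a'_n \geq a_i$ for all $i \leq n$. Then invoke the increasing-chains-have-upper-bounds half of that same lemma to produce some $a^*$ of sort $\theta$ with $a^* > a'_n$ for every $n$. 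Setting $b = a^*$, we get $f_n(a^*) > f_{n+1}(a^*)$ for every $n$, an infinite strictly descending chain in $\theta'$, which contradicts the inductive hypothesis.

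The only real obstacle is producing a single $a^*$ of sort $\theta$ that simultaneously witnesses every inequality $f_n > f_{n+1}$; the preceding lemma is tailored precisely to supply this, provided we first replace the raw witnesses $a_n$ by an increasing chain via pairwise upper bounds. Everything else is bookkeeping on the definition of $<$ at each sort.
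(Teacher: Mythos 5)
Your proof is correct and takes essentially the same route as the paper: induction on sorts, with the $Ord$ and product cases immediate, and in the function case reusing the witness-smoothing and chain-upper-bound construction from the preceding lemma to obtain a single argument above which every inequality $f_n>f_{n+1}$ holds pointwise, producing a descending chain in $\theta'$ that contradicts the inductive hypothesis. The only cosmetic difference is that you evaluate at the upper bound $a^*$ itself (so you need it to be a strict bound of the smoothed chain), whereas the paper evaluates at an arbitrary $b>a$; both rest on the same prior lemma.
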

\begin{proof}
  By induction on sorts. The case where the sort is Ord is immediate, as is the case of a product sort. For the function case, if we have $f_0>f_1>\cdots$, we can, as in the previous lemma, choose an $a$ so that $f_i(b)>f_{i+1}(b)$ for all $b>a$ and all $i$. But this gives us a decreasing sequence contradicting the inductive hypothesis.
\end{proof}

\subsection{Ordinal Bounds}

\begin{definition}
  Let $\sigma$ be a $PA^{\infty,N,L,m,\ell}_{2,<r}$-sequence and let $T$ be a set of tags. We define $\mathcal{T}_T(\sigma)$, the \emph{open tags at $\sigma$}, inductively by:
  \begin{itemize}
  \item $\mathcal{T}_T(\langle\rangle)=\{\top_{m',\ell'}\}_{m'<m,\ell'\leq L}\cup \{t\in T\mid \mathsf{r}(t)=\{\exists^2X\,\phi\}\text{ where }dp(\phi,\{X\})<m\}$,
  \item if $\mathcal{R}(\iota)$ is an $\Omega^\flat_{X^\ell,\exists^2X\,\phi}$ rule with $dp(\phi,{X})<m$ then $\mathcal{T}_T(\sigma\iota)=\mathcal{T}_T(\sigma)\cup\{({\sim}\phi(X^\ell),\langle\rangle,{\sim}\phi(X^\ell))\}$,
  \item if $\mathcal{R}(\iota)$ is an Cut$\Omega^\flat_{X^\ell,\exists^2X\,\phi}$ rule with $dp(\phi,{X})<m$ and $\iota=\bot$ then $\mathcal{T}_T(\sigma\iota)=\mathcal{T}_T(\sigma)\cup\{({\sim}\phi(X^\ell),\langle\rangle,{\sim}\phi(X^\ell))\}$,
  \item if $\mathcal{R}(\iota)$ is a Read$_t$ rule with $t\in\mathcal{T}_T(\sigma)$ then $\mathcal{T}_T(\sigma\iota)=\mathcal{T}_T(\sigma)\cup\{\Delta_\iota(\mathcal{R}_\iota)_{\mathsf{t}}\}$,
  \item otherwise $\mathcal{T}_T(\sigma\iota)=\mathcal{T}_T(\sigma)$.
  \end{itemize}

\end{definition}
The important fact is that we include each tag for which $PA^{\infty,N,L,m,\ell}_{2,<r}$ contains Read rules which branch over the theory. Importantly, we do \emph{not} include Root tags introduced at an $\Omega$ or Cut$\Omega$ for which the corresponding Read rules are not present. We also include a ``placeholder'' tag $\top_{m',\ell'}$ for each theory we might encounter tags for.

The extra tags $T$ account for the possibility that $\sigma$ belongs to a locally defined function; $T$ will typically be $\Gamma(d)_{\mathsf{t}}$.

\begin{definition}
  When $\sigma\sqsubseteq\tau$, there is a function $\pi:\mathcal{T}_T(\tau)\rightarrow\mathcal{T}_T(\sigma)$; for $t$ in $\mathcal{T}(\tau)$, $\pi(t)$ is the $t'\in\mathcal{T}_T(\sigma)$ with $\mathsf{r}(t')=\mathsf{r}(t)$ and $\epsilon_{t'}$ maximal so $\epsilon_{t'}\sqsubseteq\epsilon_t$ if there is one, and $\top_{comp(\mathsf{r}(t))}$ otherwise.
\end{definition}

\begin{definition}  
  When $T$ is a set of root tags and $\sigma$ is a $PA^{\infty,N,L,m,\ell}_{2,<r}$-sequence, we define sorts $\mathcal{O}^{\mathrm{prem}}_T(\sigma)$ and $\mathcal{O}_T(\sigma)$ by setting $\mathcal{O}^{\mathrm{prem}}_T(\sigma)$ to be $\prod_{t\in \mathcal{T}_T(\sigma)}\mathcal{O}_T(\epsilon(t))$ and $\mathcal{O}_T(\sigma)$ to be $\mathcal{O}_T^{\mathrm{prem}}(\sigma)\rightarrow Ord$.
  
  Given $\sigma\sqsubset\tau$ and $f_\sigma,f_\tau$ of sorts $\mathcal{O}^{\mathrm{prem}}_T(\sigma)$, $\mathcal{O}^{\mathrm{prem}}_T(\tau)$ respectively, we say $f_\tau< f_\sigma$ if, for each $t\in\mathcal{T}(\tau)$:
  \begin{itemize}
  \item if $\pi(t)=t$ then $(f_\tau)_t\leq (f_\sigma)_t$,
  \item if $\pi(t)\neq t$ then $(f_\tau)_t<(f_\sigma)_{\pi(t)}$.
  \end{itemize}

  When $d$ is a deduction, an \emph{ordinal bound on $d$} is an assignment, to each $\sigma\in\dom(d)$ so that $d(\sigma)$ is not a Read rule, of an $o_\sigma^d$ of sort $\mathcal{O}_{\Gamma(d)_{\mathsf{t}}}(\sigma)$ such that whenever $\sigma\sqsubset\tau$ and $f_\tau<f_\sigma$, $o^d_\tau(f_\tau)<o^d_\sigma(f_\sigma)$.
\end{definition}
When $m=0$, $\mathcal{T}_T(\sigma)=\emptyset$, so $\mathcal{O}^{\mathrm{prem}}_T(\sigma)$ is always an empty product---that is, a single point---and so $\mathcal{O}_T(\sigma)$ is an ordinal. That is, an ordinal bound on a deduction in $PA^{\infty,N,L,0,\ell}_{2,<r}$ is precisely a decreasing assignment of ordinals as usual.

When $m=1$, $\mathcal{T}_T(\sigma)$ is a set of tags for theories where $m=0$, so $\mathcal{O}^{\mathrm{prem}}_T(\sigma)$ is an indexed collection of ordinals. An ordinal bound therefore assigns, to each $\sigma$, a function from (possibly several) ordinals to an ordinal. As we will see when analyzing the cut-elimination operators below, these are familiar ordinal bounds on functions---for instance, when $d$ is the $\mathrm{Reduce}$ operator, we will be able to assign the function $\lambda\beta. \omega^\beta$ to each $\sigma$.

The decision to not assign bounds to Read rules reflects the fact that Read rules are ``ephemeral''---they will disappear as we evaluate functions. (This is related to why we need to prohibit consecutive Reads, because we need to avoid infinite branches in which no bounds get assigned.)

We adopt the convention that we write elements of $\mathcal{O}^{\mathrm{prem}}_T(\sigma)\rightarrow Ord$ as ordinal terms using variables indexed by $\mathcal{T}_T(\sigma)$. The variable indexed by $\top_{m,\ell}$ is written $\Omega_{m,\ell}$. For instance, we will write $\Omega_{0,1}^4+\Omega_{0,0}^2\cdot 2$ as an abbreviation for $\lambda\Omega_{1,1}\lambda\Omega_{0,0}. \Omega_{0,1}^4+\Omega_{0,0}^2\cdot 2$.

This notation means the sort may need to be inferred from context---we could also write $\Omega_{0,1}^4+\Omega_{0,0}^2\cdot 2$ for $\lambda\Omega_{0,1}\lambda\Omega_{1,0}\lambda\Omega_{0,0}. \Omega_{0,1}^4+\Omega_{0,0}^2\cdot 2$.

When $|T|=1$, we will write $\beta$ for the unique variable indexed by $T$; in the case $|T|>1$, we will attach subscripts to $\beta$ to indicate which variable we mean.

Finally, when $t$ is a term of sort $\theta\rightarrow Ord$, and $f$ is a term of type $Ord\rightarrow Ord$, we write $f(t)$ for $\lambda s. f(ts)$. For instance, we write $t+2$ for $\lambda s. ts+2$.

The following comparison observation is useful. Suppose $o$ is an ordinal bound on $d$ and $d(\langle\rangle)$ is Cut$\Omega^\flat$. We can view $o_{\langle\bot\rangle}$ as a bound of sort $\mathcal{O}_T(d_{\langle\top\rangle})\rightarrow\mathcal{O}_T(d)$, and observe that, for any $\alpha$, $o_{\langle\bot\rangle}(\alpha)<o_{\langle\rangle}$---the tag corresponding to the Cut$\Omega^\flat$ rule is the only tag present in $\mathcal{T}_T(\langle\bot\rangle)\setminus\mathcal{T}_T(\langle\rangle)$, so if we assign the same values to all other tags then, as soon as we choose $\Omega_{m,\ell}>\alpha$, the comparison requirement on an ordinal bound ensures $o_{\langle\bot\rangle}(\alpha,\{\beta_i\})<o_{\langle\rangle}(\{\beta_i\})$.

\subsection{Assigning Ordinal Bounds}

We next show that all the deductions and operations we have used so far can be assigned ordinal bounds.

\begin{lemma}
The locally defined function Id$^\phi$ has an ordinal bound given by $o_\sigma=\beta$.
\end{lemma}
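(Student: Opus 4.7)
The plan is to verify the two conditions defining an ordinal bound: correct sort ($o_\sigma \in \mathcal{O}_T(\sigma)$) and strict decrease ($o_\tau(f_\tau) < o_\sigma(f_\sigma)$ whenever $\sigma \sqsubsetneq \tau$ and $f_\tau < f_\sigma$).

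First I would unfold the structure of $\mathrm{Id}^\phi$. Because $\mathrm{Id}^\phi$ alternates Read and rule layers by construction, every non-Read node has the form $\sigma = \mathcal{R}_1\iota_1\mathcal{R}_2\iota_2\cdots\mathcal{R}_k$, with parent $\mathrm{Read}_{\cdots,\phi,\iota_1\cdots\iota_{k-1},\phi}$. Tracing through the Read-to-premise transitions along the path to $\sigma$, the tag $t_\sigma := (\phi,\iota_1\cdots\iota_{k-1},\phi)$ lies in $\mathcal{T}_T(\sigma)$ (it was added at the preceding Read transition as the $\iota_{k-1}$-component of the family of tags introduced there). So the variable $\beta$ (understood, under the $|T|=1$ convention, as the variable indexed by the unique $T$-tag and interpreted pointwise through the higher-sort structure) is a legitimate element of $\mathcal{O}_T(\sigma)$, which yields the sort-correctness claim.

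For strict decrease, given $\sigma \sqsubsetneq \tau$ both non-Read, the path between them must cross at least one Read rule, so $\epsilon_{t_\tau}$ is a proper extension of $\epsilon_{t_\sigma}$. Under the projection $\pi\colon \mathcal{T}_T(\tau) \to \mathcal{T}_T(\sigma)$, the image of $t_\tau$ is a tag whose $\epsilon$ is a strict prefix of $\epsilon_{t_\tau}$, so $\pi(t_\tau) \neq t_\tau$. The second clause of the definition of $f_\tau < f_\sigma$ then supplies a strict inequality $(f_\tau)_{t_\tau} < (f_\sigma)_{\pi(t_\tau)}$ at the tag-variable level.

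The main obstacle will be promoting this tag-level inequality to the required ordinal-level inequality $o_\tau(f_\tau) < o_\sigma(f_\sigma)$: because tag-variables carry a higher sort rather than living directly in $Ord$, the projection $\beta$ must be followed by further pointwise applications before bottoming out in $Ord$. I would handle this by induction on sort depth, showing that the Read-induced strict extension of $\epsilon$ at the top level forces a strict inequality that propagates through each pointwise application until the base $Ord$ comparison is reached.
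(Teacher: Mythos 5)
Your argument is essentially the paper's: its proof likewise uses that $\mathrm{Id}^\phi$ alternates Read rules with copied rules, so between any two non-Read nodes $\sigma\sqsubset\tau$ there is a Read with root $\{\phi\}$, whence the $\phi$-indexed component drops strictly, $f^\tau_\phi<f^\sigma_\phi$, which the paper takes as exactly the needed comparison. The sort-promotion issue you flag in your last paragraph is not treated in the paper at all (its proof ends at the tag-level inequality ``as needed''), so that part of your plan is extra bookkeeping the paper leaves implicit rather than a different approach.
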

Note that, according to our notation above, this is assigning the identity function $\lambda\beta. \beta$ (more precisely, we are assigning $\lambda\{\Omega_{m,\ell}\}\lambda\beta. \beta$).
\begin{proof}
We only need to check the comparison condition: suppose $\sigma\sqsubset\tau$ are in $\dom(\mathrm{Id}^\phi)$ and neither is a Read rule. Let $\{f^\sigma_t\}_{t\in\mathcal{T}_{(\phi,\epsilon_\bot,\phi)}(\sigma)}$ and $\{f^\tau_t\}_{t\in\mathcal{T}_{(\phi,\epsilon_\bot,\phi)}(\tau)}$ be given. By construction, there is a Read rule with root $\{\phi\}$ between $\sigma$ and $\tau$, so in particular $f^\tau_{\phi}<f^\sigma_{\phi}$, as needed.
\end{proof}

\begin{lemma}
  For any closed formula $\phi$ there are $N,L$ large enough so that the proof-tree $d_\phi$ has an ordinal bound $o^{d_\phi}$ with $o^{d_\phi}_\sigma=\max\{\Omega_{m,\ell}\}+k$.
\end{lemma}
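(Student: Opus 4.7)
The plan is to induct on the structure of $\phi$, mirroring the construction of $d_\phi$ given in the previous lemma, and assign ordinal bounds node-by-node. We choose $N,L$ at the outset large enough that every complexity $(m',\ell')$ arising at any second-order subformula of $\phi$ satisfies $m'\le N$ and $\ell'\le L$; this exists since $\phi$ has finite quantifier-depth. The constant $k$ will grow with $\phi$ but will be uniform across $\sigma$. When $\phi$ is a literal, $d_\phi$ is the single node $\mathrm{Ax}_{\{\phi,{\sim}\phi\}}$, so assigning $o=0$ suffices (and the sort is just $\prod\mathrm{Ord}$, so $\max\{\Omega_{m,\ell}\}+0$ has this value trivially).

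For $\phi=\psi_0\wedge\psi_1$ or $\psi_0\vee\psi_1$, the IH gives bounds $\max\{\Omega_{m,\ell}\}+k_i$ on the $d_{\psi_i}$; the root $\mathrm{I}\wedge$ or $\mathrm{I}\vee$ introduces no new tag, so $\mathcal{T}_T$ and the sort are preserved between root and premise, and assigning $\max\{\Omega_{m,\ell}\}+\max(k_0,k_1)+1$ at the root gives strict descent under any comparable family of $\Omega$-values. The first-order quantifier cases are handled identically, using the $\omega$-rule for $\forall x\,\psi$: the premises $d_{\psi(n)}$ all have a common $k$ since they are produced by the same inductive recipe, so one more $+1$ at the root suffices. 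The induction step in the complexity-class definition of the ordinal bounds is routine in all these cases.

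The main case is $\phi=\forall^2 X\,\psi$ (the $\exists^2$ case being symmetric). Let $(m_0,\ell_0)=\mathrm{comp}(\exists^2 X\,{\sim}\psi)$. Recall $d_\phi$ places $\mathrm{I}\forall^{Y^\ell}_{\forall^2 X\,\psi}$ below $\Omega^\flat_{Y^\ell,\exists^2 X\,{\sim}\psi}$ below $\mathrm{Id}^{\psi(Y^\ell)}$. On every non-Read node of the $\mathrm{Id}$ sub-tree I keep the bound $\beta$ supplied by the previous lemma; here $\beta$ is the variable indexing the tag $t=(\{\psi(Y^\ell)\},\langle\rangle,\{\psi(Y^\ell)\})$ which is freshly opened by the $\Omega^\flat$ rule. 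At the $\Omega^\flat$ node I assign $\Omega_{m_0,\ell_0}+1$, and at the $\mathrm{I}\forall^{Y^\ell}$ root I assign $\max\{\Omega_{m,\ell}\}+2$. Descent from the root to the $\Omega^\flat$ is immediate: identical sort, constant strictly larger. The interesting check is descent from the $\Omega^\flat$ to the top of $\mathrm{Id}^{\psi(Y^\ell)}$: the tag $t$ is newly open there, so no tag with root $\{{\sim}\psi(Y^\ell)\}$ was present below, whence $\pi(t)=\top_{m_0,\ell_0}$ by definition. The comparison condition then forces $(f_\tau)_t<(f_\sigma)_{\top_{m_0,\ell_0}}=\Omega_{m_0,\ell_0}$, which is exactly what is needed to conclude $\beta<\Omega_{m_0,\ell_0}+1$.

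The one subtle point, and what I expect to be the main obstacle, is precisely this $\Omega^\flat$ handoff. One must verify that $t$ truly has no ancestor with the same root in $\mathcal{T}_T$ at the $\Omega^\flat$ node (so that the fallback to the $\top$-placeholder in the definition of $\pi$ is actually triggered) and that $(m_0,\ell_0)$ is admissible given our chosen $N,L$. Both follow from the syntactic shape of $d_\phi$, but the argument is where the ``$+k$'' shape of the bound is genuinely used: without a strict increment at the $\Omega^\flat$ we could not cross from $\beta$ to an $\Omega$-expression. Once this hand-off is in place, the inductive assembly proceeds verbatim, with the $\mathrm{Id}$-nodes' value $\beta$ treated as the degenerate instance of the bound (it is strictly dominated in the ambient sort by $\max\{\Omega_{m,\ell}\}+k$ for any $k\ge 0$, under the comparison induced by the opening of $t$).
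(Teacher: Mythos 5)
The paper states this lemma without proof (it is one of the results left unproved in this draft), so there is no official argument to compare against; judged on its own, your proposal is correct and is clearly the intended one: induct on $\phi$ following the construction of $d_\phi$, keep the bound $\beta$ from the $\mathrm{Id}$ lemma on the subtree above each $\Omega^\flat$, and verify the hand-off at the $\Omega^\flat$ node using the fallback $\pi(t)=\top_{comp(\mathsf{r}(t))}$, which is what forces $(f_\tau)_t<(f_\sigma)_{\top_{m_0,\ell_0}}$ and lets the constant-plus-$\max\{\Omega_{m,\ell}\}$ values at the finite skeleton dominate the tag-variable values above. Your reading that the stated form $\max\{\Omega_{m,\ell}\}+k$ cannot literally hold at the (ill-founded) $\mathrm{Id}$-subtree nodes, where the bound must remain the tag variable $\beta$, is correct and indeed necessary, and your choice of $N,L$ large enough to make every complexity occurring at a second-order subformula of $\phi$ admissible matches the remark following the lemma that only those complexities need appear in the maximum.
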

We could restrict the maximum to those $m,\ell$ so that $\phi$ contains a subformula of the form $\exists^2 X\,\psi$ with $comp(\exists^2X\,\psi)=(m,\ell)$ (indeed, we could restrict to \emph{maximal} subformulas of this kind).

\begin{lemma}
  The locally defined function $F^{\phi,X^\ell\mapsto\psi}$ has an ordinal bound given by $o_\sigma=\beta(\{\Omega_{m',\ell'}\}_{m'<m})$ where $\ell'=lvl(\psi)$.
\end{lemma}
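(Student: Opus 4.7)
The plan is to proceed by induction on $\sigma \in \dom(F^{\phi, X^\ell \mapsto \psi})$, checking at every non-Read node $\sigma$ that the assignment $o_\sigma = \beta(\{\Omega_{m',\ell'}\}_{m'<m})$ is well-sorted and satisfies the descent requirement. Here $\beta$ denotes the unique variable associated to the input tag $(\phi(X^\ell), \epsilon, \phi(X^\ell))$, which persists in $\mathcal{T}_T(\sigma)$ throughout the construction. Observing that substitution of $\psi$ for $X^\ell$ preserves the depth of each subformula while shifting its level to $lvl(\psi)$, the argument sort of $\beta$ is precisely the product over tags of depth $<m$ at level $lvl(\psi)$, matching the $\Omega_{m',\ell'}$ appearing in the proposed bound.

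The descent property is the heart of the argument. From the construction, every non-Read rule emitted by $F$ is immediately preceded by a Read rule (with root $\phi(X^\ell)$) and immediately followed by a Read rule (also with root $\phi(X^\ell)$) whose $\epsilon$-component strictly extends. Hence for any pair $\sigma \sqsubset \tau$ of non-Read nodes, there is an intervening Read that strictly extends the $\epsilon$ of the input tag. Under the projection $\pi : \mathcal{T}_T(\tau) \to \mathcal{T}_T(\sigma)$, the input tag at $\tau$ has $\pi$-image strictly distinct from the one at $\sigma$, so $(f_\tau)_\beta < (f_\sigma)_{\pi(\beta)}$ holds in the strict sense required by the definition of $<$ on the function sort, while all $\Omega$-components at worst weakly decrease. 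Evaluating the strictly-decreased function $\beta$ at the (weakly decreased) $\Omega_{m',\ell'}$ arguments then yields $o_\tau(f_\tau) < o_\sigma(f_\sigma)$, as required.

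Two cases of the construction require extra attention. When $\mathcal{R}$ is $\mathrm{Ax}_{\{X^\ell n, \neg X^\ell n\}}$, we splice in $d_{\psi(n)}$ from the preceding lemma, whose bound is $\max\{\Omega_{m'',\ell''}\}+k$ ranging over the complexities occurring in $\psi(n)$; all such $m''$ satisfy $m'' < m$ (since $X^\ell$ does not occur in $\psi$), so this bound fits strictly below the $\beta$-term for any appropriate choice of supporting inputs. When $\mathcal{R}$ is a Read rule whose root has been replaced by $(\mathfrak{T}', \Lambda_0', \zeta', \Lambda')$, the induction threads through a chain of Read rules in the alternate theory $\mathfrak{T}'$ (possibly recursing if $\Delta(\mathcal{R}')\cap\Lambda[X^\ell\mapsto\psi]=\emptyset$); since $m'$ strictly decreases at each such recursion the chain terminates, and along the way each Read still extends an $\epsilon$-component upon which the uniform $\beta$-term depends.

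The main obstacle is the bookkeeping for replaced tags: one must verify that chains of Read rules in the alternate theory $\mathfrak{T}'$, whose premise indices need not coincide with those of $\mathfrak{T}$, still produce a genuine strict extension of the input-tag $\epsilon$ on which the bound $\beta(\{\Omega_{m',\ell'}\}_{m'<m})$ descends. This reduces to a careful induction on the recursive Read-replacement, using the fact noted in the construction that $m'$ strictly drops at each recursive step, so the replacement terminates and produces exactly one non-Read rule sandwiched between Read rules that extend $\epsilon$.
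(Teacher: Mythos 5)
The paper does not actually prove this lemma (it is one of the bounds stated without proof in this draft), so there is nothing to compare line by line; your proposal follows the same template as the paper's proof of the bound for $\mathrm{Id}^\phi$ (every non-Read node is sandwiched between Reads on the input tag, forcing the $\beta$-component to drop strictly), and that is surely the intended skeleton. However, there is a genuine gap in how you handle the spliced subtrees. Your claim that ``every non-Read rule emitted by $F$ is immediately preceded and followed by a Read rule with root $\phi(X^\ell)$'' fails inside the copies of $d_{\psi(n)}$ substituted at $\mathrm{Ax}_{\{X^\ell n,\neg X^\ell n\}}$: there the construction copies $d_{\psi(n)}$ wholesale, with no Reads on the input tag at all. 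For two comparable non-Read nodes $\sigma\sqsubset\tau$ inside such a copy, the projection fixes every open tag, so one may take $f_\tau=f_\sigma$, and the uniform assignment $o_\sigma=\beta(\{\Omega_{m',\ell'}\})$ then gives $o_\tau(f_\tau)=o_\sigma(f_\sigma)$, violating strict descent. So the bound cannot simply be carried unchanged through the splice; one must switch to (a suitably shifted copy of) the bound $\max\{\Omega_{m'',\ell''}\}+k$ on $d_{\psi(n)}$ and verify the interface comparison. Your one-sentence fix --- that this bound ``fits strictly below the $\beta$-term'' --- does not follow from the definitions: the ordinal-bound condition quantifies over all admissible assignments $f$, and nothing ties the value assigned to the input tag $\beta$ to the values of the placeholder variables $\Omega_{m'',\ell''}$, so $\max\{\Omega_{m'',\ell''}\}+k$ can exceed $\beta(\{\Omega_{m',\ell'}\})$. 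This is the point where the lemma actually needs an argument (or a refinement of the stated assignment), and the proposal does not supply it.

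A secondary caution: your step ``evaluating the strictly-decreased function $\beta$ at the weakly decreased $\Omega$-arguments yields strict decrease'' presumes a pointwise reading of $<$ on function sorts, whereas the paper defines $f<g$ by eventual domination (there is $a$ such that $f(b)<g(b)$ for $b>a$); at the particular placeholder arguments being used, strictness is not automatic. The paper's own terse proof of the $\mathrm{Id}^\phi$ bound has the same issue, so this may reflect an intended monotonicity convention, but as written it is an additional unverified step in your argument. The discussion of the replaced-tag Read case (termination because $m'$ drops) is at the level of detail of the paper's construction and is acceptable as a sketch.
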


\begin{lemma}\label{thm:embedding_bounds}
  For any deduction $d$ in $PA_2$, the deduction $d^\infty_{n_1,\ldots,n_a,\ell_1,\ldots,\ell_b}$ has an ordinal bound with $o^{d^\infty}_{\langle\rangle}$ bounded by an expression of the form $\sum_i t_i$ where each $t_i$ is one of:
  \begin{itemize}
  \item a finite number,
  \item $\omega$,
  \item an expression $\Omega_{m,\ell}(\{\Omega_{m',\ell'}\}_{m'<m})$ for some $\ell,\ell'$.
  \end{itemize}
\end{lemma}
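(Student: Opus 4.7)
The plan is to induct on the structure of the original $PA_2$-deduction $d$, following the same case analysis as in the proof of Theorem~\ref{thm:embedding}, and in each case specifying an ordinal bound on the embedded proof-tree. Write $\mathfrak{A}$ for the class of ordinal expressions of the form $\sum_i t_i$ allowed in the statement: $\mathfrak{A}$ is closed under adding a finite number and under maxima of two of its elements (since $\max(\alpha,\beta)\le\alpha+\beta$), so the bookkeeping is stable under all ``local'' bound manipulations used by the embedding.

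For True and Ax I assign $o^{d^\infty}_{\langle\rangle}$ a finite number. For I$\wedge$, I$\vee$, I$\exists$, and Cut, I take the maximum of the bounds produced by the inductive hypothesis and add $1$; the result is still in $\mathfrak{A}$. For I$\forall$ and I$\forall^2$, the embedded deduction ends in an $\omega$-rule (resp.\ an I$\forall^{Y^\ell}$-rule) whose premises all carry the \emph{same} bound expression produced by the inductive hypothesis — substituting different numerals $n_{a+1}$ or level indices $\ell$ does not alter the structure of $d$ and so does not alter the shape of the bound — so adding $1$ at the rule yields a strictly larger bound that dominates every premise. For Ind, the auxiliary deductions $d^\infty_n$ are built by recursion on $n$, with each successor step contributing a fixed finite increment; the resulting root bound of $d^\infty_n$ has the form $\alpha_0 + c\cdot n$, and the enclosing $\omega$-rule produces a root bound majorized by $\alpha_0 + \omega + 1$, which carries exactly the single $\omega$-term permitted in $\mathfrak{A}$.

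The principal case is I$\exists^\psi_{\exists^2X\,\phi}$. Here the embedding combines (i) the inductive bound $\alpha'\in\mathfrak{A}$ on $d'$ proving $\phi[\psi]$, (ii) the bound $\beta(\{\Omega_{m',\ell'}\}_{m'<m})$ on $F^{{\sim}\phi,X^\ell\mapsto\psi}$ already established, (iii) an $\Omega^\flat$ rule introducing $\exists^2X\,\phi$, and (iv) a concluding Cut. Immediately above the $\Omega^\flat$ the newly opened tag for ${\sim}\phi(Y^\ell)$ contributes the free variable $\beta$ to the premise bound; immediately below the $\Omega^\flat$ that tag is closed, and using the comparison observation recorded directly after the definition of an ordinal bound, I may assign below the rule the value $\Omega_{m,\ell}(\{\Omega_{m',\ell'}\}_{m'<m})$, where $(m,\ell)=\mathrm{comp}(\exists^2X\,\phi)$. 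The final Cut then yields a root bound bounded by $\alpha' + \Omega_{m,\ell}(\{\Omega_{m',\ell'}\}_{m'<m}) + c$ for a finite $c$, still an element of $\mathfrak{A}$.

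The main obstacle is the strict-comparison clause of the ordinal bound at the $\Omega^\flat$ itself: I must check that $o^{d^\infty}_{\sigma\bot}(f_{\sigma\bot})<o^{d^\infty}_\sigma(f_\sigma)$ for every admissible $f_{\sigma\bot}<f_\sigma$, even though $\sigma\bot$ carries a genuinely extra open tag whose variable is free in $o^{d^\infty}_{\sigma\bot}$. This is exactly the situation handled by the comparison observation: once $\Omega_{m,\ell}$ in $f_\sigma$ is taken larger than the ordinal assigned to the new tag in $f_{\sigma\bot}$, the substitution gives a strictly larger value, and the required inequality follows. Once this verification is in place, all remaining cases close by the additive manipulations already described, completing the induction.
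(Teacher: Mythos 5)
The paper itself states this lemma without proof (it is one of the results the in-progress draft leaves unargued), so there is nothing to compare you against directly; on its own terms, your strategy---induction on the finitary $PA_2$-deduction following the case split of Theorem~\ref{thm:embedding}, re-using the stated bounds for $d_\phi$ and $F^{{\sim}\phi,X^\ell\mapsto\psi}$, and closing the class of allowed expressions under finite additions, maxima, and a single $\omega$ for the induction rule---is clearly the intended one, and the purely additive cases (True, Ax, I$\wedge$, I$\vee$, I$\exists$, Cut, $\omega$, Ind) are essentially correct bookkeeping. Two small points there: the uniformity you invoke at the $\omega$-rule (that the bound expression does not depend on the numerals) should be carried explicitly as part of the induction statement, and it holds only for the first-order parameters---the indices $(m,\ell)$ and $\ell'$ appearing in the $\Omega$-terms do change with the chosen levels $\ell_j$, via $lvl$; fortunately only numerals vary across the premises of an embedded $\omega$-rule, so this is a harmless imprecision.

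The genuine gap is exactly at the step you flag as the main obstacle and then dismiss. The ``comparison observation'' after the definition of ordinal bounds is a \emph{consequence} of already having a bound: it is used when consuming a bound, by choosing the value of $\Omega_{m,\ell}$ larger than a given $\alpha$. When you are \emph{verifying} that your proposed assignment is a bound, you do not get to choose $f_\sigma$; you must establish $o_{\sigma\bot}(f_{\sigma\bot})<o_\sigma(f_\sigma)$ for every $f_{\sigma\bot}<f_\sigma$. What that hypothesis gives you (since $\pi$ sends the newly opened tag to the placeholder $\top_{m,\ell}$) is that the value assigned to the new tag is below the $\Omega_{m,\ell}$-component of $f_\sigma$---but for $m>0$ this is the order on a function sort, i.e.\ only eventual domination, and the shared placeholder components are related only by $\leq$. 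Passing from that to the strict inequality between the evaluated expressions $\beta(\{\Omega_{m',\ell'}\})$ and $\Omega_{m,\ell}(\{\Omega_{m',\ell'}\})$ at the particular argument tuples requires monotonicity or ``largeness'' properties of the functions involved that neither you nor the paper makes explicit; this needs to be proved (or built into the definition of admissible bounds), not cited. Moreover, the comparison clause quantifies over all pairs $\sigma\sqsubset\tau$, not just parent--child pairs, so in the I$\exists^\psi_{\exists^2X\,\phi}$ case you must also check the root and the $\Omega^\flat$ node against every node \emph{inside} $F^{{\sim}\phi,X^\ell\mapsto\psi}$ (and inside the $d_{\psi(n)}$ subtrees it splices in), where the root tag has been updated by Read rules and further tags may have been opened; your proposal only addresses the single step at the $\Omega^\flat$ rule. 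Until these verifications are supplied, the principal case of the induction is not closed.
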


The bounds on the cut-elimination operators are standard.
\begin{theorem}\label{thm:cut_elim_bounds}
  \begin{enumerate}
  \item $\mathrm{Inverse}^\bot_\eta(\sigma)$ is bounded by $o_\sigma=\beta$,
  \item $\mathrm{Inverse}^\wedge_{\phi_L,\phi_R,B}$ is bounded by $o_\sigma=\beta$,
  \item $\mathrm{Inverse}^\forall_{\phi,x,n}(\sigma)$ is bounded by $o_\sigma=\beta$,
  \item $\mathrm{Elim}^\vee_{\phi_0,\phi_1}(\sigma)$ is bounded by $o_\sigma=\beta_\wedge+\beta_\vee$,
  \item $\mathrm{Elim}^\forall_{\phi, x}(\sigma)$ is bounded by $o_\sigma=\beta_\forall+\beta_\exists$,
  \item $\mathrm{Elim}^X_{X^\ell,m}(\sigma)$ is bounded by $o_\sigma=\beta_X\#\beta_{\neg X}$,
  \item $\mathrm{Elim}^{Q^2}_{\phi,X}(\sigma)$ is bounded by $o_\sigma=\beta_{\forall^2}\#\beta_{\exists^2}$,
  \item $\mathrm{Reduce}_r(\sigma)$ is bounded by $o_\sigma=\omega^\beta$.
  \end{enumerate}
\end{theorem}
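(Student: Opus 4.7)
The plan is to verify each bound by induction on positions $\sigma$ within the respective locally defined function, checking the comparison condition rule by rule and, in particular, at each Read rule (where the relevant $\beta$-variable strictly decreases as $\epsilon$ advances into the input). All eight assignments are constant in $\sigma$, so the comparison condition reduces to: whenever $\sigma \sqsubset \tau$ and $f_\tau < f_\sigma$, the same ordinal term evaluated at $f_\tau$ is strictly less than at $f_\sigma$. For this, it suffices that between $\sigma$ and $\tau$ at least one relevant Read rule was passed, so that some $\beta$ strictly dropped.

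For the three inversion operators (1)--(3), each tree consists of a stem of Read rules with fixed root tag (the formula being inverted), punctuated by pass-through copies of the input's non-matching rules, terminating when the matching rule is encountered. At the terminal position we either invoke $\mathrm{True}$ or, in (2)--(3), select the appropriate sub-premise of the matched rule by advancing $\epsilon$ to $\epsilon B$ or $\epsilon n$; in both subcases this still strictly advances the single root tag. At every pass-through the same Read tag is present in the premise, and the bound is again $\beta$. Hence $o_\sigma = \beta$ satisfies the comparison condition throughout.

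For the elimination operators (4)--(7), the critical position introduces either a Cut (cases 4--6) or a Cut$\Omega^\flat$ (case 7) combining a recursive descent on one side with an inversion against the opposite side. On the $\top$ premise the operator resumes reading on the main branch, contributing the first summand; on the $\bot$ premise we invoke the corresponding inversion lemma against the opposite formula, contributing the second summand. In (4)--(5) the two sides are strictly ordered in the flow of the definition (once the matching rule on the main side is met, the inversion is invoked on a fixed sub-position of the other branch), so a Cantor sum is sufficient. In (6)--(7), by contrast, the construction alternates between two root tags and the corresponding Reads can interleave, so the Hessenberg sum $\#$ is needed: a strict drop in \emph{either} argument produces a strict drop in the natural sum. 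For (7) we additionally invoke the general observation that the $\bot$ premise of a Cut$\Omega^\flat$ has a strictly smaller ordinal bound than its conclusion, which is exactly what secures the inequality across the new Cut$\Omega^\flat$ introduced by $\mathrm{Elim}^{Q^2}$.

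Finally, the Reduce bound (8) follows the classical cut-elimination pattern. At every non-cut rule we pass through and the bound $\omega^\beta$ is inherited because $\beta$ strictly decreases at the inner Read. At each Cut$_\phi$ of rank $r$, Reduce is replaced by a composition of two recursive calls $\mathrm{Reduce}_{r,\epsilon 0}$ and $\mathrm{Reduce}_{r,\epsilon 1}$ through the appropriate elimination operator: if these recursive subtrees have bounds $\omega^{\beta_0}$ and $\omega^{\beta_1}$ with $\beta_0,\beta_1 < \beta$, then the composed bound computed from (4)--(7) is at most (the Hessenberg sum of) $\omega^{\beta_0}$ and $\omega^{\beta_1}$, which is strictly below $\omega^\beta$. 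The main obstacle is to thread the bounds carefully through the coinductive definition of $\mathrm{Reduce}_{r,\epsilon}$ and through the lifts used in the composition; for the latter, Lemma \ref{operator_extension} together with the evident rule that a lift inherits its bound unchanged on new positions (assigning each newly inserted pass-through the bound of its unlifted source) shows that $\cdot^\uparrow$ does not alter the governing ordinal, which closes the argument.
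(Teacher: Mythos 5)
The paper itself offers no proof of this theorem---it is dismissed as ``standard''---so your write-up can only be measured against the intended routine verification, whose one worked instance in the paper is the bound for $\mathrm{Id}^\phi$: a term in the tag variables, with strictness supplied by the fact that some Read with the relevant root is passed between any two non-Read positions. Your handling of (1)--(3) and (6)--(7), and the computation $\omega^{\beta_0}\#\omega^{\beta_1}<\omega^{\beta}$ behind (8), follow exactly this pattern and are fine in outline.

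The genuine gap is your justification of the plain (Cantor) sums in (4)--(5). You claim the two phases are ``strictly ordered in the flow,'' so an ordinary sum suffices; but look at where strictness is actually required. In $\mathrm{Elim}^\vee$, after the Cut placed at an I$\vee^B$ rule, branch $0$ continues reading the $\vee$-input (only $\beta_\vee$ drops there) while branch $1$ runs $\mathrm{Inverse}^\wedge$ and reads the $\wedge$-input (only $\beta_\wedge$ drops there). With the constant assignment $o_\sigma=\beta_\wedge+\beta_\vee$, comparing the Cut node with a node on branch $1$ involves a strict drop only in the \emph{left} summand, and ordinal addition is merely weakly monotone in its left argument: take $(f_\sigma)_{\wedge}=1$, $(f_\tau)_{\wedge}=0$, $(f_\sigma)_{\vee}=(f_\tau)_{\vee}=\omega$, so $0+\omega=1+\omega$ and the comparison condition fails. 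Swapping the summands does not help, since after the Cut both variables drop, on different branches; so the non-interleaving you appeal to is not the property that rescues an ordinary sum. What is needed is either the natural sum here as well (harmless downstream, since (8) needs $\#$ anyway) or a non-constant assignment that discards the frozen summand on the inversion branch (e.g.\ $\beta_\wedge$ alone above branch $1$, which is strictly below $\beta_\wedge+\beta_\vee$ at the Cut node once a $\wedge$-Read has been passed); as written, your argument for (4)--(5) does not establish the comparison condition. Two smaller points: in (7) the inequality across the newly created Cut$\Omega^\flat$ is secured simply because the constant bound does not mention the freshly opened tag and some $\beta_{\forall^2}$ or $\beta_{\exists^2}$ has strictly dropped at an intervening Read---the observation you invoke about the $\bot$-premise of Cut$\Omega^\flat$ concerns bounds that depend on $\Omega_{m,\ell}$ and is not what is needed; and in (8) the real work is the ``threading'' you defer: inside the spawned elimination operators the bound must be expressed in the several open descendants of the root tag $(\emptyset,\epsilon,\emptyset)$ rather than a single $\beta$, and naming this obstacle is not yet resolving it.
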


\subsection{Calculating Ordinal Bounds}

Of course, our definitions have been set up to ensure that when we interpret a proof as a function, its ordinal bounds can be calculated using the same functions.

\begin{theorem}
  Let $F$ be a locally defined function on $PA^{\infty,N,L,m,\ell}_{2,<r}$ with ordinal bound $o^F$ and $d$ a deduction in $PA^{\infty,N,L,m,\ell}_{2,<r}$ with ordinal bound $o^d$. Then $\bar F(d)$ has an ordinal bound given by $o^{\bar F(d)}_\sigma=o^F_{h(\sigma)}(\{o^d_{\epsilon(t)}\}_{t\mid \mathsf{r}(t)=t_F})$.
\end{theorem}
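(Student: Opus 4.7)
The plan is to show the proposed assignment $o^{\bar F(d)}_\sigma := o^F_{h(\sigma)}\bigl(\{o^d_{\epsilon_t}\}_{t:\mathsf{r}(t)=t_F}\bigr)$ both lives in the correct sort $\mathcal{O}_{\Gamma(\bar F(d))_\mathsf{t}}(\sigma)$ and satisfies the strict-decrease condition defining an ordinal bound. Both facts are proved by unwinding the construction of $\bar F(d)$ alongside the inductive definition of $\mathcal{T}_T(\sigma)$.

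For the sort check, I would first observe that, tracking the construction of $\bar F(d)$, the open tag set $\mathcal{T}_{\Gamma(\bar F(d))_\mathsf{t}}(\sigma)$ decomposes naturally into two disjoint classes: (i) the open tags of $F$ at $h(\sigma)$ whose root is \emph{not} $\Theta_F$ (these survive into $\bar F(d)$ unchanged, as the Theorem on $\Gamma(\bar F(d))$ guarantees), and (ii) for each open tag $t \in \mathcal{T}_{\Gamma(F)_\mathsf{t}}(h(\sigma))$ with $\mathsf{r}(t) = \Theta_F$, the set of open tags of $d$ at position $\epsilon_t$. Since $o^d_{\epsilon_t}$ has sort $\mathcal{O}_{\Gamma(d)_\mathsf{t}}(\epsilon_t)$, feeding it into the $t$-slot of $o^F_{h(\sigma)}$ is type-correct, and what remains unbound after all such substitutions is exactly a function of the class-(i) tags together with the class-(ii) tags jointly—i.e.\ the target sort.

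For the strict-decrease condition, consider $\sigma \sqsubset \tau$ in $\dom(\bar F(d))$ with both rules non-Read and $f_\tau < f_\sigma$. Under the decomposition above, split $f_\sigma, f_\tau$ into their class-(i) and class-(ii) components; the assumption $f_\tau < f_\sigma$ restricts to the usual comparison on each class. For each tag $t$ with $\mathsf{r}(t)=\Theta_F$ open at $h(\sigma)$, let $\epsilon_t'$ be its $\epsilon$-coordinate at $h(\tau)$: if $\epsilon_t' = \epsilon_t$ the class-(ii) components for $t$ are non-strictly $\leq$, while if $\epsilon_t' \sqsupsetneq \epsilon_t$ (meaning $F$ has crossed one of its own Read$_{\ldots,t_F,\ldots}$ rules in between, advancing $d$) they are strictly $<$, so the decrease property of $o^d$ yields $o^d_{\epsilon_t'}(\cdots) \leq o^d_{\epsilon_t}(\cdots)$ or $<$ accordingly. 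Feeding these inequalities into $o^F_{h(\sigma)}$ versus $o^F_{h(\tau)}$ and invoking the decrease property of $o^F$ along $h(\sigma) \sqsubset h(\tau)$ produces the desired strict inequality on the outer ordinals.

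The main obstacle is the bookkeeping to line up the three notions of "same vs.\ strictly-moved tag" under the substitution: one must verify that a strict class-(ii) decrease for some $t$ combined with a (merely) weak $o^F$-decrease at all class-(i) slots (i.e.\ the case where $h(\sigma) = h(\tau)$ but some $\epsilon_t$ has advanced) still suffices to force $o^{\bar F(d)}_\tau(f_\tau) < o^{\bar F(d)}_\sigma(f_\sigma)$, using the fact that in this case the advance of $\epsilon_t$ forces $t$ itself to change (so its input enters strictly) in $o^F$'s own comparison between $h(\sigma)$ and $h(\tau)$. The freedom-from-consecutive-Reads hypothesis on $F$ is what guarantees the intermediate positions through which we chase $h$ behave well and that the $\epsilon_t$-advances are finite and controlled.
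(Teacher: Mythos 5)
Your argument is essentially the paper's own proof: the paper likewise reduces everything to the comparison condition, lifting a decreasing pair $f_\tau<f_\sigma$ at $\sigma\sqsubset\tau$ to tuples at $h(\sigma)\sqsubset h(\tau)$ by filling the root-$\Theta_F$ slots with $o^d_{\epsilon_t}$, using the decrease of $o^d$ precisely where a Read over $\Theta_F$ has advanced the read position, and then invoking the decrease property of $o^F$. The only quibble is minor: the case you single out as the main obstacle ($h(\sigma)=h(\tau)$ with an advanced $\epsilon_t$) cannot occur, since $h$ is strictly monotone along branches of $\bar F(d)$, so the strict drop is always delivered through $o^F$'s comparison at the changed tag slot fed by $o^d$.
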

\begin{proof}
We need only check the comparison rule. Suppose $\sigma\sqsubset\tau$ and $f_\tau<f_\sigma$. We can lift these to tuples $f'_{h(\tau)},f'_{h(\sigma)}$ by $(f'_{h(\tau)})_t=(f_\tau)_t$ for $t\in \mathcal{T}_T(\tau)$ and $(f'_{h(\tau)})_{t}=o^d_{\epsilon_t}$ when $\mathsf{r}(t)=t_F$, and similarly for $\sigma$. Then $f'_{h(\tau)}<f'_{h(\sigma)}$, so $o^{\bar F(d)}_{\tau}(f_\tau)=o^F_{h(\tau)}(f_{h(\tau)},o^d_{\epsilon(\tau)})=o^F_{h(\tau)}(f'_{h(\tau)})<o^F_{h(\sigma)}(f'_{h(\sigma)})=o^{\bar F(d)}_\sigma(f_\sigma)$.
\end{proof}

We have an additional issue: some of our functions have the ``wrong'' domains---we defined the function on one domain, but need to use lifting to get it to the domain it needs to operate on. We need to show that we can lift ordinal bounds as well.
\begin{theorem}\label{thm:bound_lifting}
  Let $F$ be a locally defined function with ordinal bound $o^F$ and let $F^\uparrow$ be the lift. Then the the bound $o^{F^\uparrow}_\sigma=o^F_{h(\sigma)}$ is an ordinal bound on $F^\uparrow$.
\end{theorem}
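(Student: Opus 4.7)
The plan is a direct verification of the comparison condition defining an ordinal bound, reducing to the bound already assumed on $F$. Fix $\sigma\sqsubsetneq\tau$ in $\dom(F^\uparrow)$ with neither a Read rule and suppose $f_\tau<f_\sigma$; unfolding $o^{F^\uparrow}_\rho=o^F_{h(\rho)}$, the goal becomes $o^F_{h(\tau)}(f_\tau)<o^F_{h(\sigma)}(f_\sigma)$, read under the canonical identification of sorts induced by the lift.

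First, I would construct by induction on $\sigma$ an embedding $\rho_\sigma:\mathcal{T}_{\Gamma(F)_\mathsf{t}}(h(\sigma))\hookrightarrow\mathcal{T}_{\Gamma(F^\uparrow)_\mathsf{t}}(\sigma)$ tracking the correspondence between open tags in $F$ and $F^\uparrow$. Following the cases of Definition~\ref{def:lifting}, the placeholders $\top_{m',\ell'}$ and the inherited tags from $\Gamma(F)_\mathsf{t}\subseteq\Gamma(F^\uparrow)_\mathsf{t}$ match canonically, while each tag opened by an $\Omega^\flat$, Cut$\Omega^\flat$, or Read rule of $\mathfrak{T}$ in $F$ corresponds to the tag opened by its preserved or lift-modified analogue in $F^\uparrow$. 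The dual restriction $\rho_\sigma^*$ of ordinal-term arguments then simply forgets the coordinates indexed by tags outside the image---namely those arising from $\mathfrak{T}^+\setminus\mathfrak{T}$-rules or lift-inserted Reads, which $F$ does not see.

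Next, I would verify that $\rho$ commutes with the tag-projections $\pi$ appearing in the cross-sort comparison on prem-sorts: for each $t\in\mathcal{T}_{\Gamma(F)_\mathsf{t}}(h(\tau))$, the $\pi_F$-projection of $t$ matches the $\pi_{F^\uparrow}$-projection of $\rho_\tau(t)$ under $\rho_\sigma$. This is a case-by-case check on the rule traversed at each step of the path from $\sigma$ to $\tau$. Combined with the easy observation $h(\sigma)\sqsubseteq h(\tau)$, this yields $\rho_\tau^* f_\tau<\rho_\sigma^* f_\sigma$, and when $h(\sigma)\sqsubsetneq h(\tau)$ the ordinal bound on $F$ gives $o^F_{h(\tau)}(\rho_\tau^* f_\tau)<o^F_{h(\sigma)}(\rho_\sigma^* f_\sigma)$, which unfolds to the target inequality.

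The main obstacle is the boundary case $h(\sigma)=h(\tau)$, in which the path from $\sigma$ to $\tau$ runs through only rules of $\mathfrak{T}^+\setminus\mathfrak{T}$ (interleaved with lift-inserted Reads, as permitted by the free-of-consecutive-Reads discipline). Here $F$'s bound collapses to equality, so the strictness required for $o^{F^\uparrow}$ must come from new tags in $\mathcal{T}_{\Gamma(F^\uparrow)_\mathsf{t}}(\tau)\setminus\mathcal{T}_{\Gamma(F^\uparrow)_\mathsf{t}}(\sigma)$ lying outside the image of $\rho$. Careful tracking of the $\pi_\sigma(\epsilon)$ updates in Definition~\ref{def:lifting} shows that each such new tag projects under $\pi_{F^\uparrow}$ to a pre-existing coordinate on which $f_\tau<f_\sigma$ is forced to be strict; combined with the monotonicity built into the function sorts, this delivers the required strict decrease. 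This final structural reconciliation is the technical heart of the argument.
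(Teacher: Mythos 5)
A preliminary point: the paper does not actually contain a proof of Theorem~\ref{thm:bound_lifting} --- it is one of the results left unproved in this draft --- so your proposal cannot be measured against the author's argument, only on its own terms. On its own terms, the skeleton is the right shape: an embedding $\rho_\sigma$ of the open tags of $F$ at $h(\sigma)$ into those of $F^\uparrow$ at $\sigma$, compatibility of $\rho$ with the projections $\pi$, and a case split according to whether $h(\sigma)\sqsubsetneq h(\tau)$ or $h(\sigma)=h(\tau)$. You have also correctly identified the equal-$h$ stretch (copied $\mathfrak{T}^+\setminus\mathfrak{T}$ rules interleaved with lift-inserted Reads) as the crux.

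But the resolution you offer for that crux does not work as written. First, at those nodes $h(\sigma)$ is a Read node of $F$, where $o^F$ is not even defined (bounds are assigned only at non-Read nodes), so the theorem's formula already requires an interpretation there which your proposal never supplies. Second, the strictness cannot come from ``monotonicity built into the function sorts'': the sorts consist of arbitrary functions, and the definition of an ordinal bound imposes only the cross-node condition $o_\tau(f_\tau)<o_\sigma(f_\sigma)$ for $\sigma\sqsubset\tau$, never monotonicity of a single $o_\sigma$ in its arguments. Third, even granting such monotonicity, the hypothesis $f_\tau<f_\sigma$ forces a strict drop only on the coordinates of the \emph{new} tags created by the lift-inserted Read between $\sigma$ and $\tau$ (each must lie strictly below the value at its projection); it forces nothing strict on any pre-existing coordinate. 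Since your $\rho^*$ by construction discards exactly those new coordinates, what survives satisfies only $\leq$, so you can conclude at best $o^F(\rho^*_\tau f_\tau)\leq o^F(\rho^*_\sigma f_\sigma)$, not the strict inequality the theorem demands. To repair this one has to read $o^F_{h(\sigma)}$ as an ordinal term whose variables are re-indexed along the lift's maps $\pi_\sigma$, so that the variable attached to the root tag is evaluated at the newest root-$\Theta_F$ tag open at $\sigma$ --- whose value the comparison does force to decrease strictly across each lifted Read --- and in addition use strict monotonicity of the bound terms in those variables, a property enjoyed by all the concrete bounds in the paper ($\beta$, $\omega^\beta$, sums, natural sums) but not guaranteed by the definition of an ordinal bound. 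As it stands, the step you yourself call the technical heart is precisely where the argument has a genuine gap.
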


\subsection{Collapsing}

\begin{theorem}\label{ref:collapsing_bounds}
  Let $d$ be a proof-tree in $PA^{\infty,N,L,m,\ell}_{2,<0}$ with $m+\ell>0$ such that
\begin{itemize}
\item   whenever $\exists^2 X\,\phi$ appears as a subformula of a formula in $\Gamma(d)$, $comp(\exists^2X\,\phi)<(m,\ell)$,
\item if $t\in\Gamma(d)_{\mathsf{t}}$ then $comp(\exists^2X\,\Theta_t)<(m,\ell)$.
\end{itemize}
If $d$ has an ordinal bound then $D_{m,\ell}(d)$ has an ordinal bound.
\end{theorem}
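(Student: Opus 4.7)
The plan is to build the ordinal bound $o^{D_{m,\ell}(d)}$ by recursion alongside the construction of $D_{m,\ell}(d)$ itself. The central tool is a \emph{higher-order collapsing transformation} $\Psi_{m,\ell}$ that sends a term of sort $\mathcal{O}_T(\rho)$ possibly mentioning $\Omega_{m,\ell}$ and tags of complexity $(m,\ell)$ to a term of the reduced sort in which those indices have been absorbed. We require $\Psi_{m,\ell}$ to be monotone ($f<g$ implies $\Psi_{m,\ell}(f)<\Psi_{m,\ell}(g)$) and to commute with the ordinal arithmetic operations appearing in our bounds. The existence of such a $\Psi_{m,\ell}$ follows from the abstract well-foundedness results for the sort hierarchy proved earlier, and plays the role of Buchholz's $\psi_\Omega$ in this higher-order setting.

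For positions $\sigma$ where $d(h(\sigma))$ is neither a Cut$\Omega^\flat$ of complexity $(m,\ell)$ nor a Read with root tag of complexity $(m,\ell)$, we simply take $o^{D_{m,\ell}(d)}_\sigma = \Psi_{m,\ell}(o^d_{h(\sigma)})$; monotonicity of $\Psi_{m,\ell}$ together with the comparison condition on $o^d$ then verifies the comparison condition at these positions, since the relevant premises introduce no new complexity-$(m,\ell)$ tags.

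When $d(h(\sigma))$ is a Cut$\Omega^\flat_{Z^\ell,Y^\ell,\exists^2X\,\phi}$ of complexity $(m,\ell)$, $D_{m,\ell}(d)$ emits a Rep and moves to $h(\sigma)\bot$. Using the observation recalled in the excerpt that $o^d_{h(\sigma)\bot}(\alpha) < o^d_{h(\sigma)}$ for every admissible $\alpha$, we instantiate $\alpha$ by $o^d_{h(\sigma)\top}$ (viewing $o^d_{h(\sigma)\bot}$ as a bound of sort $\mathcal{O}_T(d_{h(\sigma)\top})\rightarrow\mathcal{O}_T(d_{h(\sigma)})$), so that $\Psi_{m,\ell}(o^d_{h(\sigma)\bot}(o^d_{h(\sigma)\top})) < \Psi_{m,\ell}(o^d_{h(\sigma)})$, providing the strict decrease across the Rep. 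For a Read$_{\mathfrak{T},t}$ with $\mathrm{comp}(\exists^2X\,\Theta_t)=(m,\ell)$, the construction inspects $\mathcal{R}=d(\pi_{t,\epsilon}(\sigma)\epsilon_t)$ and either absorbs $\mathcal{R}$ or emits a modified Read; in both subcases the bound at $\sigma$ is read off from the pre-computed bounds at the probed positions of $d$, with Theorem~\ref{thm:bound_lifting} used to rectify sorts when lifting is required.

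The main obstacle will be the Read case. The recursive evaluation $D_{m,\ell,\pi_{t,\epsilon}(\sigma)}(\epsilon_t)$ may probe positions of $d$ that themselves lie above further Cut$\Omega^\flat$ rules of complexity $(m,\ell)$, producing iterated collapses of the form $\Psi_{m,\ell}(\cdots\Psi_{m,\ell}(\cdots))$ as well as excursions into multiple branches of $d$. To tame this, the argument proceeds by a well-founded outer induction on a measure derived from $o^d$ via the well-foundedness of $\mathcal{O}_{\Gamma(d)_{\mathsf{t}}}(\langle\rangle)$, showing that each Read probe strictly decreases the measure; the hypothesis that $\Gamma(d)$ contains no complexity-$(m,\ell)$ tags is critical here, as it rules out a pathological loop in which the root tag at $\sigma$ would cycle back. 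Once the outer induction is set up, the comparison condition in every case reduces to monotonicity of $\Psi_{m,\ell}$ composed with that of $o^d$.
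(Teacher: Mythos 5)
Your argument rests on a collapsing operator $\Psi_{m,\ell}$ that is assumed to be sort-reducing, strictly monotone ($f<g$ implies $\Psi_{m,\ell}(f)<\Psi_{m,\ell}(g)$), and to commute with the ordinal arithmetic in the bounds, and you claim its existence ``follows from the abstract well-foundedness results for the sort hierarchy.'' This is the gap: the lemmas proved earlier give only that each sort is well-founded and directed; they do not produce any map from the richer sort (with variables $\Omega_{m,\ell}$ and complexity-$(m,\ell)$ tag indices) to the reduced sort with these properties, and producing such a map is essentially the content of the collapsing theorem itself (it is what the deferred ordinal notation system would supply). Indeed, for classical collapsing functions strict monotonicity fails without ``controlledness'' side conditions on which ordinals may occur in the bound, and nothing in your setup supplies such conditions; moreover at positions inside collapsed $\Omega^\flat$-premises the required bound is itself of function sort, so $\Psi_{m,\ell}$ would have to act coherently on higher sorts, which a rank-type function obtained from well-foundedness does not do. So the first paragraph of your proposal assumes the hard part.

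The paper avoids any such operator: it argues by transfinite induction on $o^d_{\langle\rangle}$ (using well-foundedness of the sorts), and in the Cut$\Omega^\flat$ case uses the identity $D_{m,\ell}(d)=D_{m,\ell}(\overline{d_{\langle\bot\rangle}^\uparrow}(D_{m,\ell}(d_{\langle\top\rangle})))$: first collapse the top premise, evaluate the $\bot$-premise function on it (so the new bound comes from the evaluation theorem $o^{\bar F(d)}_\sigma=o^F_{h(\sigma)}(\ldots)$ applied to the bound of the \emph{already collapsed} top, not to $o^d_{h(\sigma)\top}$ as in your instantiation), observe via the comparison remark that this bound drops below $o^d_{\langle\rangle}$, and apply the induction hypothesis again; in the $\Omega^\flat$ case the bound on the collapsed function subtree is \emph{defined} as a supremum, over all inputs $d'$ with a given bound $o^*$, of the bounds occurring in $D_{m,\ell}(\overline{d_{\langle\bot\rangle}^\uparrow}(d'))$. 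Your pointwise recipe $o^{D_{m,\ell}(d)}_\sigma=\Psi_{m,\ell}(o^d_{h(\sigma)})$ cannot reproduce this: after the splice the bounds in the result are nested evaluations rather than images of single bounds of $d$, the open-tag structure (hence the sort) along $\sigma$ in $D_{m,\ell}(d)$ includes tags contributed by rules copied from the function subtree, and your proposed repair for the Read case---an outer induction on a measure derived from $o^d$ with ``each Read probe strictly decreases the measure''---is unsubstantiated: Read rules carry no bounds and the probed positions $\epsilon_t$ jump to arbitrary places in $d$, so no decrease is evident. The decrease the paper actually uses is the one you cite for Cut$\Omega^\flat$, but it is exploited inside the induction on $o^d_{\langle\rangle}$ together with the supremum construction for $\Omega^\flat$-premises, a step your proposal does not address at all.
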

\begin{proof}
  We proceed by induction on the ordinal bound $o^d_{\langle\rangle}$. If the last inference rule of $d$ is anything other than $\Omega^\flat$, Cut$\Omega^\flat$ then the claim follows immediately from the inductive hypothesis. (We will not encounter Read rules at complexity $(m,\ell)$, as we will see below.)

  If the last inference is a Cut$\Omega^\flat$ with complexity $(m,\ell)$, we can use the fact that in this case, $D_{m,\ell}(d)$ is $D_{m,\ell}(\overline{d_{\langle\bot\rangle}^\uparrow}(D_{m,\ell}(d_{\langle\top\rangle})))$. The inductive hypothesis gives us an ordinal bound on $D_{m,\ell}(d_{\langle\top\rangle})$, and therefore the ordinal bound on $\overline{d_{\langle\bot\rangle}^{\uparrow}}(D_{m,\ell}(d_{\langle\top\rangle}))$ is less than $o^d$, so the inductive hypothesis applies again.

  Suppose that last inference is an $\Omega^\flat$ inference. For any $d'$ in $PA^{\infty,N,m',\ell'}_{2,<0}$, we have a deduction $\overline{d_{\langle\bot\rangle}^{\uparrow}}(d')$, and if $d'$ has an ordinal bound then $\overline{d_{\langle\bot\rangle}^{\uparrow}}(d')$ has an ordinal bound below the ordinal bound of $d$, so the inductive hypothesis applies to $\overline{d_{\langle\bot\rangle}}(d')$.

  Taking the observation that $D_{m,\ell}(d_{\langle\bot\rangle})$ is $d'\mapsto D_{m,\ell}(\overline{d_{\langle\bot\rangle}^\uparrow}(d'))$, we can assign an ordinal bound to $D_{m,\ell}(d_{\langle\bot\rangle})$: for any $\sigma$ we assign to $\sigma$ the bound mapping $o^*$ to the supremum, over all $d'$ with bound $o^*$, of the bound in $D_{m,\ell}(\overline{d_{\langle\bot\rangle}^{\uparrow}}(d'))=\overline{D_{m,\ell}(d_{\langle\bot\rangle})^{\uparrow}}(d')$ of those $\tau$ with $h(\tau)=\sigma$.

  The Cut$\Omega^\flat$ case is similar, taking into account the bound on $d_{\langle\top\rangle}$ as well.
\end{proof}

\begin{theorem}
  If $PA_2\vdash\phi$ where $\phi$ is a $\Sigma_1$ formula then $PA_{2,<0}\vdash\phi$. In particular, $PA_2$ is consistent.
\end{theorem}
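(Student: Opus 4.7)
The plan is to assemble the embedding (Theorem~\ref{thm:embedding}), the iterated $\mathrm{Reduce}$ cut-elimination, and the collapsing operator $D_{m,\ell}$, together with their ordinal-bound counterparts (Theorems~\ref{thm:embedding_bounds}, \ref{thm:cut_elim_bounds}, \ref{thm:bound_lifting}, and \ref{ref:collapsing_bounds}), to produce a cut-free proof-tree of $\phi$ in $PA^{\infty,N,L,0,0}_{2,<0}$ that carries an ordinal bound. The bound will force this proof-tree to be well-founded, after which a standard analysis of cut-free $\Sigma_1$-deductions in the infinitary system yields a finitary $PA_{2,<0}$ proof.

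In detail: from a valid $PA_2$-deduction of $\phi$, Theorem~\ref{thm:embedding} gives a deduction $d_0$ of $\phi$ in some $PA^{\infty,N,L}_{2,<r}$, and Theorem~\ref{thm:embedding_bounds} equips $d_0$ with an ordinal bound. Applying $\overline{\mathrm{Reduce}_i^\uparrow}$ for $i=0,1,\ldots,r-1$ (and invoking Theorems~\ref{thm:cut_elim_bounds} and \ref{thm:bound_lifting} to carry the bound along) eliminates all cuts; then iterating $D_{m,\ell}$ through the complexities in reverse lexicographic order (Theorem~\ref{ref:collapsing_bounds}) descends to a proof-tree $d$ in $PA^{\infty,N,L,0,0}_{2,<0}$ with $\Gamma(d)\subseteq\{\phi\}$ and an ordinal bound. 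At complexity $(0,0)$ the set $\mathcal{T}_T(\sigma)$ is empty for every $\sigma$, so the sort $\mathcal{O}_T(\sigma)$ collapses to $Ord$ and the bound is a genuine decreasing ordinal assignment; hence $d$ is well-founded, i.e.\ an honest (infinitary) deduction.

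The remaining work is the \emph{boundedness} step: from a well-founded cut-free proof-tree of a $\Sigma_1$ sentence $\phi=\exists x\,\psi(x)$ (with $\psi$ bounded) in $PA^{\infty,N,L,0,0}_{2,<0}$, produce a finitary $PA_{2,<0}$-deduction. I proceed by transfinite induction on the ordinal bound, tracing $\phi$ through the only rules available at complexity $(0,0)$---Rep, True, Ax, I$\wedge$, I$\vee$, $\omega$, I$\exists$, and I$\forall^{Y^\ell}$---to locate an I$\exists^n$ that introduces $\phi$ with a specific numeric witness $n$. Since $\psi$ is bounded, either $\psi(n)$ is true (and then $PA_{2,<0}$ derives $\phi$ by $\mathrm{True}_{\psi(n)}$ followed by I$\exists^n$), or a secondary boundedness argument rules out a cut-free well-founded derivation of a false closed $\Delta_0$ formula. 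Consistency is then the case $\phi\equiv\exists x\,(\mathrm{S}x\neq \mathrm{S}x)$: no true witness exists, so no such $d$ can exist, and hence $PA_2\not\vdash 0\neq 0$.

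The main obstacle is the final boundedness argument: one must formulate the transfinite induction so that it correctly propagates through the $\omega$-rule (which has infinitely many premises, each of which must itself be recursively analyzed) and through the I$\forall^{Y^\ell}$ rule (which introduces an irrelevant second-order formula into the conclusion but must be shown not to obstruct witness extraction). Everything else is orchestration of results already established in the preceding sections.
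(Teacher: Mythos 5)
Your orchestration of the pipeline is exactly the paper's: embed via Theorem~\ref{thm:embedding} with bounds from Lemma~\ref{thm:embedding_bounds}, apply $\overline{\mathrm{Reduce}_i^\uparrow}$ for $i=r-1,\ldots,0$ carrying bounds by Theorems~\ref{thm:cut_elim_bounds} and~\ref{thm:bound_lifting}, collapse with the $D_{m,\ell}$ using Theorem~\ref{ref:collapsing_bounds}, and observe that at complexity $(0,0)$ the ordinal bound is a genuine decreasing ordinal assignment, so the resulting proof-tree $d$ is well-founded. Where you diverge is the last step. The paper finishes purely syntactically: since $\Gamma(d)\subseteq\{\phi\}$ with $\phi$ an arithmetic $\Sigma_1$ sentence, the subformula property of the cut-free system means the only rules that can occur in $d$ are True, Ax, I$\wedge$, I$\vee$, I$\exists$, and Rep --- in particular no $\omega$, no I$\forall^{Y^\ell}$, and no $\Omega^\flat$ --- so after deleting Rep inferences, $d$ \emph{is} already a finitary $PA_{2,<0}$-deduction of $\phi$, and consistency follows because no such cut-free deduction has empty conclusion. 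You instead run a semantic boundedness/witness-extraction argument by transfinite induction on the bound, recovering a true instance $\psi(n)$ and re-deriving $\phi$ in $PA_{2,<0}$. That route can be made to work and buys you slightly more (genuine $\Sigma_1$-reflection, i.e.\ the truth of $\phi$), but it is heavier than necessary, and as sketched your case analysis is incomplete: the base theory $PA^{\infty,N,L,-}_{2,<0}$ still formally contains $\Omega^\flat$ rules (whose premises carry tags and would have to be treated as functions), which you omit from your list of rules at complexity $(0,0)$. The clean way to dispose of $\Omega^\flat$, and simultaneously of the $\omega$ and I$\forall^{Y^\ell}$ rules you flag as the ``main obstacle,'' is precisely the paper's subformula-property observation: none of these rules can introduce a subformula of a closed arithmetic $\Sigma_1$ sentence, so they simply never occur in $d$, and no induction through them is needed.
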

\begin{proof}
  Suppose $PA_2\vdash\phi$. Theorem \ref{thm:embedding} gives us $n,L,r$ and a proof-tree $d^\infty$ in $PA^{\infty,N,L}_{2,<r}$ so that $\Gamma(d^\infty)\subseteq\{\phi\}$. By Lemma \ref{thm:embedding_bounds} there are ordinal bounds on $d^\infty$.

  Consider the deduction $d'=\mathrm{Reduce}_0^{\uparrow}(\cdots\mathrm{Reduce}_{r-1}^\uparrow(d^\infty)\cdots)$; by Theorems \ref{thm:cut_elim_bounds} and \ref{thm:bound_lifting}, $d'$ has an ordinal bound as well.

  Finally, we let $d''=D_{0,0}(\cdots D_{n,L}(d')\cdots)$. By Theorem \ref{ref:collapsing_bounds}, $d''$ has ordinal bounds as well. Since $d''$ is a deduction in $PA^{\infty,N,L,-}_{2,<0}$, an ordinal bound on $d''$ is simply an assignment of ordinals: $d''$ is well-founded.

  Since $PA^{\infty,N,L,-}_{2,<0}$ satisfies the subformula property, the only rules that can appear in $d''$ are True, Ax, I$\wedge$, I$\vee$, I$\exists$, and Rep. Removing Rep rules leaves us with a deduction in $PA_{2,<0}$.
\end{proof}

\printbibliography
\end{document}